\newcommand\eqdef{\stackrel{\textrm{{\scriptsize def}}}{=}}
\newcommand\diff{\setminus}
\newcommand\nat{\mathbb{N}}
\newcommand\rat{\mathbb{Q}}
\newcommand{\real}{\mathbb{R}}
\newcommand\Ipref{\mathbf{I}}
\newcommand\IR{\Ipref\real}
\newcommand\realp{\real_+}
\newcommand{\creal}{\overline\real_+}
\newcommand\IRbb{\IR^\star} 
\newcommand\Icreal{\IR_+^\star} 
\newcommand\identity[1]{\mathrm{id}_{#1}}
\newcommand\Lform{\mathcal{L}}
\newcommand\Val{\mathbf{V}}
\newcommand\lebesgue{\overline{\lambda}}
\newcommand\upc{\mathop{\uparrow}}
\newcommand\dc{\mathop{\downarrow}}
\newcommand\uuarrow{\rlap{$\uparrow$}\raise.5ex\hbox{$\uparrow$}}
\newcommand\ddarrow{\rlap{$\downarrow$}\raise.5ex\hbox{$\downarrow$}}
\newcommand\Open{{\mathcal O}}
\newcommand\fin{\mathrm{f}}
\newcommand\supp{\mathrm{supp}\;}
\newcommand\cb[1]{\mathbf{#1}} 
\newcommand{\catc}{{\cb{C}}} 
\newcommand{\Dcpo}{{\mathbf{Dcpo}}}
\newcommand\rval[1]{\widetilde{#1}}
\newcommand\Rp{\real_+}
\newcommand\directed{\sideset{}{^{\,\makebox[0pt]{$\scriptstyle\uparrow$}\;}}}
\newcommand\filtered{\sideset{}{^{\,\makebox[0pt]{$\scriptstyle\downarrow$}\;}}}
\newcommand\dsup{\directed\sup}
\newcommand\finf{\filtered\inf}
\newcommand\fcap{\filtered\bigcap}
\newcommand\uuuarrow{\rlap{$\uparrow$}\raise.5ex\hbox{$\uuarrow$}}
\newcommand\One[1]{\mathbf{1}_{#1}}
\newcommand\patch{\mathsf{patch}}
\begin{document}
\begin{frontmatter}
  \title{Continuous $R$-valuations} 
  \author{Jean Goubault-Larrecq\thanksref{a}\thanksref{myemail}}	
   \author{Xiaodong Jia\thanksref{b}\thanksref{ALL}\thanksref{coemail}}		
   \address[a]{Universit\'e Paris-Saclay, CNRS, ENS Paris-Saclay, Laboratoire M\'ethodes Formelles, 91190, Gif-sur-Yvette, France}  							
  \thanks[ALL]{Xiaodong Jia acknowledges the support of NSFC (No.\ 12001181, No.\ 12231007).}   
   \thanks[myemail]{Email: \href{mailto: goubault@lsv.fr} {\texttt{\normalshape
     goubault@lsv.fr}}} 
  \address[b]{School of Mathematics, Hunan University,
  Changsha, Hunan 401182, China} 
  \thanks[coemail]{Email:  \href{mailto: jia.xiaodong@yahoo.com} {\texttt{\normalshape
	 jia.xiaodong@yahoo.com}}}
\begin{abstract} 
  We introduce continuous $R$-valuations on directed-complete posets
  (dcpos, for short), as a generalization of continuous valuations in
  domain theory, by extending values of continuous valuations from
  reals to so-called Abelian d-rags $R$.
  
  Like the valuation monad $\Val$ introduced by Jones and Plotkin, we
  show that the construction of continuous $R$-valuations extends to a
  strong monad $\Val^R$ on the category of dcpos and Scott-continuous
  maps.  Additionally, and as in recent work by the two authors and
  C. Th\'eron, and by the second author, B. Lindenhovius, M. Mislove and
  V. Zamdzhiev, we show that we can extract a commutative monad
  $\Val^R_m$ out of it, whose elements we call minimal $R$-valuations.

  We also show that continuous $R$-valuations have close connections
  to measures when $R$ is taken to be $\Icreal$, the interval domain of
  the extended nonnegative reals: (1) On every coherent topological space, 
  every non-zero, bounded $\tau$-smooth measure $\mu$ (defined on the Borel 
  $\sigma$-algebra), canonically determines a continuous $\Icreal$-valuation;
  and (2) such a continuous $\Icreal$-valuation is the most precise (in a certain
  sense)  continuous $\Icreal$-valuation that approximates $\mu$, when the
  support of $\mu$ is a compact Hausdorff subspace of a second-countable
  stably compact topological space. This in particular applies to Lebesgue
  measure on the unit interval. As a result, the Lebesgue measure can be identified 
  as a continuous $\Icreal$-valuation.  Additionally, we show that the
  latter is minimal.

\end{abstract}
\begin{keyword}
  $R$-valuations; measures; dcpos; commutative monads.
\end{keyword}
\end{frontmatter}

\section{Introduction}
The probability of an event is most often than not understood as a
real number between $0$ and $1$, and measures, as well as continuous
valuations, take their values in $\creal$, the set of non-negative
real numbers extended with $+\infty$.  What is there that is so
special with real numbers, and can we replace $\creal$ by some
elements in some other structure?  The question was once asked by
Vincent Danos to the first author, and came back to the authors in an
attempt to formulate an alternative to measures and continuous
valuations with values taken as \emph{exact reals}, in the sense of
Real PCF \cite{escardo96a,escardo96,edalat97,Plume:ERC} for example.
Exact reals are modeled there as \emph{intervals} that enclose the
true value that is intended, and computation proceeds by refining
these intervals further and further.  Indeed, one of
  the points of this paper is that we can extend continuous valuations
  to an interval-valued form of continuous valuations, with an
  interval-valued integration theory.  In addition, this also leads us to commutative
  valuations monads with intervals as values on the category of dcpos
  and Scott-continuous maps.

We should warn the reader that such an endeavor is probably useless
for computation purposes.  In the setting of type 2 theory of
effectivity, Weihrauch has shown that, under reasonable assumptions,
the map that sends a representable measure $\mu$ on $[0, 1]$ to
$\mu [0, 1/2)$ cannot be continuous if the target space $[0, 1]$ is
given the Scott topology of the reverse ordering $\geq$
\cite[Theorem~2.7]{Weihrauch:prob}.  (It is continuous with respect to
the usual ordering $\leq$.  In passing, type 2 theory of effectivity
on the reals does not differ much from ordinary domain-based notions
of computability, as Schulz has shown \cite{Schulz:TTE=RealPCF}.)
This roughly means that if we insist on representing $\mu [0, 1/2)$ as
\emph{precise} intervals, namely as intervals $[a, a]$ with the same
left and right bounds, then the rightmost $a$ will evolve in a
discontinuous manner.

Despite this, we will show in Section~\ref{sec:meas-as-cont} that
quite a lot of measures (in the ordinary sense) have representations
as interval-valued ``measures'' with \emph{precise} intervals; see
Remark~\ref{rem:rval}, in particular.  Before that, we will have to
define what we mean by ``measure'' with values in a domain of
intervals.  We will start (after recapitulating some preliminary
notions and results in Section~\ref{sec:preliminaries}) by giving a
pretty general possible answer to V. Danos' question in
Section~\ref{sec:rugs-d-rugs}: we may safely replace $\creal$ by any
structure of a kind that we call an \emph{Abelian d-rag}, which is a
weaker form of Abelian semiring, with a compatible ordering that turns
it into a dcpo.  This will allow us to define a notion of
\emph{continuous $R$-valuation} on a space $X$, for any Abelian d-rag
$R$, in Section~\ref{sec:cont-r-valu}.  The obvious definition would
be as a function from the open subsets of $X$ satisfying certain
requirements, but those requirements have proved elusive, especially
in Abelian d-rags where the additive zero $0$ differs from the bottom
element $\bot$, as with the domain of intervals.  For example, would
you define the measure of the empty set as $0$ or as $\bot$?  One is
needed for algebraic reasoning, so that adding the measure of the
empty set does nothing; the other is needed for approximation
purposes, e.g., in order to define the integral of a function $f$ as
the supremum of simpler sums.  We sidestep the issue by defining our
continuous R-valuations as being directly the functionals that one
would usually obtain by defining an integral out of a measure.  In
Section~\ref{sec:monads-continuous-r}, we show that continuous
R-valuations, much like continuous valuations \cite{jones89,jones90},
can be organized to form a strong monad on the category $\Dcpo$ of
dcpos and Scott-continuous maps.  Furthermore, as in
\cite{goubault-jia-theron-21} and~\cite{jia2021commutative}, one can
carve out a \emph{commutative} monad of so-called \emph{minimal}
$R$-valuations from the latter.  We start to examine the relationship
between measures and continuous $R$-valuations when $R$ is either
$\creal$ or the interval domain $\Icreal$ in
Section~\ref{sec:cont-r-valu-1}.  That section is devoted to a few
simple facts, and notably to the fact that every continuous
$\Icreal$-valuation induces an ordinary continuous
($\creal$-)valuation, which we call it view from the left.  In
Section~\ref{sec:meas-as-cont}, we will see that every non-zero,
bounded $\tau$-smooth measure $\mu$ on a coherent topological space
gives rise to a continuous $\Icreal$-valuation $\rval\mu$ in a natural
way, and that $\rval\mu$ is \emph{precise} in the sense alluded to
above.  In Section~\ref{sec:repr-meas-iii}, under slightly different
assumptions, we study the continuous $\Icreal$-valuations that
approximate a given, not necessarily bounded, measure, and we show
that there is a \emph{most precise} one; it so happens that this is
$\rval\mu$, once again.  In all those cases, there is no reason why
$\rval\mu$ should be minimal.  In
Section~\ref{sec:lebesgue-r-valuation}, we illustrate the question
with the Lebesgue measure $\lambda$ on $[0, 1]$ and its associated
continuous $\Icreal$-valuation $\rval\lambda$.  We note that
$\rval\lambda$ is not minimal.  However, we will show that replacing
$\lambda$ by its image measure under the inclusion of $[0, 1]$ into a
dcpo of intervals $\IRbb$ \emph{does} yield a minimal
$\Icreal$-valuation.  We conclude in Section~\ref{sec:conc}.

\section{Preliminaries}
\label{sec:preliminaries}

We refer to \cite{billingsley86} for basics of measure theory, and to
\cite{abramsky94,gierz03,goubault13a} for basics of domain theory and
topology.

\paragraph{Measure theory.}

A \emph{$\sigma$-algebra} on a set $X$ is a collection of subsets
closed under countable unions and complements.  A \emph{measurable
  space} $X$ is a set with a $\sigma$-algebra $\Sigma_X$.  The
elements of $\Sigma_X$ are usually called the \emph{measurable
  subsets} of $X$.

A \emph{measure} $\mu$ on $X$ is a $\sigma$-additive map from
$\Sigma_X$ to $\creal \eqdef \realp \cup \{+\infty\}$, where $\creal$ is the set of extended
non-negative real numbers .  The property of
\emph{$\sigma$-additivity} means that, for every countable family of
pairwise disjoint sets $E_n$,
$\mu (\bigcup_n E_n) = \sum_n \mu (E_n)$.  (Here $n$ ranges over any
subset of $\nat$, possibly empty.)

A \emph{measurable map} $f \colon X \to Y$ between measurable spaces
is a map such that $f^{-1} (E) \in \Sigma_X$ for every
$E \in \Sigma_Y$.  The \emph{image measure} $f [\mu]$ of a measure
$\mu$ on $X$ is defined by $f [\mu] (E) \eqdef \mu (f^{-1} (E))$.

The $\sigma$-algebra $\Sigma (A)$ \emph{generated by} a family $A$ of
subsets of $X$ is the the smallest $\sigma$-algebra containing $A$.
The \emph{Borel $\sigma$-algebra} on a topological space is the
$\sigma$-algebra generated by its topology.  The \emph{standard
  topology} on $\creal$ is generated by the intervals $[0, b[$,
$]a, b[$ and $]a, +\infty$, with $0 < a < b < +\infty$.  Its Borel
$\sigma$-algebra is also generated by the intervals $]a, +\infty]$
along (the Scott-open subsets, see below).  Hence a measurable map
$h \colon X \to \creal$ is a map such that
$h^{-1} (]t, +\infty]) \in \Sigma$ for every $t \in \real$.  Its
\emph{Lebesgue integral} can be defined elegantly through
\emph{Choquet's formula}:
$\int_X h d\mu \eqdef \int_0^{+\infty} \mu (h^{-1} (]t, +\infty]))
dt$, where the right-hand integral is an ordinary Riemann integral.

This formula makes the following \emph{change-of-variables formula} an
easy observation: for every measurable map $f \colon X \to Y$, for
every measurable map $h \colon Y \to \creal$,
$\int_Y h df[\mu] = \int_X (h \circ f) d\mu$.

The \emph{monotone convergence theorem} states that, given any measure
$\mu$ on a measurable space $X$, given any sequence
${(h_n)}_{n \in \nat}$ of measurable maps from $X$ to $\creal$ that is
pointwise monotonic, their pointwise supremum $h$ is measurable, and
$\int_X hd\mu = \sup_{n \in \nat} \int_X h_n d\mu$.  If
${(h_n)}_{n \in \nat}$ is antitonic instead, then a similar theorem
holds provided that $\int_X h_n d\mu < +\infty$ for some $n \in \nat$
(but not in general): the pointwise infimum $h$ is measurable, and
$\int_X hd\mu = \inf_{n \in \nat} \int_X h_n d\mu$.

A practical way of building measures is \emph{Carath\'eodory's measure
  existence theorem}, which is the following.  A \emph{semi-ring}
$\mathcal R$ on $X$ is a collection of subsets of $X$ that is closed
under finite intersections, and such that the complement of every
element of $\mathcal R$ can be written as a finite disjoint union of
elements of $\mathcal R$.  A map $\mu \colon \mathcal R \to \creal$ is called
\emph{$\sigma$-additive}, extending the definition given above, if and
only if for every countable (possibly empty) collection of pairwise
distinct elements $E_n$ of $\mathcal R$ whose union $E$ is also in
$\mathcal R$, $\mu (E) = \sum_n \mu (E_n)$.  Then $\mu$ extends to a
measure on some $\sigma$-algebra containing $\mathcal R$.  A first use
of this theorem is to establish the existence of \emph{Lebesgue measure}
$\lambda$ on $\real$, defined so that $\lambda (]a, b[) = b-a$ for
every open bounded interval $]a, b[$.

A measure $\mu$ on $X$ is \emph{bounded} if and only if
$\mu (X) < +\infty$.  A measure $\mu$ is \emph{$\sigma$-finite} if
there is a sequence
$E_0 \subseteq E_1 \subseteq \cdots \subseteq E_n \subseteq \cdots$ of
measurable subsets of $X$ whose union is $X$ and such that
$\mu (E_n) < +\infty$ for every $n \in \nat$.  A \emph{$\pi$-system}
$\Pi$ on a set $X$ is a family of sets closed under finite
intersections.  If $X$ is a measurable space such that
$\Sigma_X = \Sigma (\Pi)$, any two $\sigma$-finite measures that agree
on $\Pi$ also agree on $\Sigma_X$.  In particular, Lebesgue measure on
$\real$ is uniquely defined by the specification
$\lambda (]a, b[) = b-a$.

\paragraph{Domain theory and topology.}

A \emph{dcpo} is a poset in which every directed family $D$ has a
supremum $\sup D$.  A prime example is $\IRbb$, the poset of closed
intervals $[a, b]$ with $a, b \in \real \cup \{-\infty, +\infty\}$ and
$a \leq b$, ordered by reverse inclusion.  Every directed family
${([a_i, b_i])}_{i \in I}$ has a supremum
$\bigcap_{i \in I} [a_i, b_i] = [\sup_{i \in I} a_i, \inf_{i \in I}
b_i]$.  
Among them, we find the \emph{total}
numbers $a \in \real \cup \{-\infty, +\infty\}$, which are equated
with the maximal elements $[a, a]$ in~$\IRbb$.

Another example is $\creal$, with the usual ordering.  We will also
consider $\Icreal$, the subdcpo of $\IRbb$ consisting of its elements
of the form $[a, b]$ with $a \geq 0$.

We will also write $\leq$ for the ordering on any poset.  In the
example of $\IRbb$, $\leq$ is $\supseteq$.  The \emph{upward closure}
$\upc A$ of a subset $A$ of a poset $X$ is
$\{y \in X \mid \exists x \in A, x \leq y\}$.  The \emph{downward
  closure} $\dc A$ is defined similarly.  A set $A$ is \emph{upwards
  closed} if and only if $A = \upc A$, and \emph{downwards closed} if
and only if $A = \dc A$.  A subset $U$ of a dcpo $X$ is
\emph{Scott-open} if and only if it is upwards closed and, for every
directed family $D$ such that $\sup D \in U$, some element of $D$ is
in $U$ already.  The Scott-open subsets of a dcpo $X$ form its
\emph{Scott topology}.

The \emph{way-below} relation $\ll$ on a poset $X$ is defined by
$x \ll y$ if and only if, for every directed family $D$ with a
supremum $z$, if $y \leq z$, then $x$ is less than or equal to some
element of $D$ already.  We write $\uuarrow x$ for
$\{y \in X \mid x \ll y\}$, and $\ddarrow y$ for
$\{x \in X \mid x \ll y\}$.  A poset $X$ is \emph{continuous} if and
only if $\ddarrow x$ is directed and has $x$ as supremum for every
$x \in X$.  A \emph{basis} $B$ of a poset $X$ is a subset of $X$ such
that $\ddarrow x \cap B$ is directed and has $x$ as supremum for every
$x \in X$.  A poset $X$ is continuous if and only if it has a basis
(namely, $X$ itself).  A poset is \emph{$\omega$-continuous} if and
only if it has a countable basis.  Examples include $\creal$, with any
countable dense subset (with respect to its standard topology), such
as the rational numbers in $\creal$, or the \emph{dyadic numbers}
$k/2^n$ ($k, n \in \nat$); or $\IRbb$ and $\Icreal$, with the basis of
intervals $[a, b]$ where $a$ and $b$ are both dyadic or rational.

We write $\Open X$ for the lattice of open subsets of a topological
space $X$.  This applies to dcpos $X$ as well, which will always be
considered with their Scott topology.  The continuous maps
$f \colon X \to Y$ between two dcpos coincide with the
\emph{Scott-continuous} maps, namely the monotonic (order-preserving)
maps that preserve all directed suprema.  We write $\Lform X$ for the
space of continuous maps from a topological space $X$ to $\creal$, the
latter with its Scott topology, as usual.  Such maps are usually
called \emph{lower semicontinuous}, or \emph{lsc}, in the mathematical
literature.  Note that $\Lform X$, with the pointwise ordering, is a
dcpo.

There are several ways in which one can model probabilistic choice.
The most classical one is through measures.  A popular alternative
used in domain theory is given by \emph{continuous valuations}
\cite{jones89,jones90}.  A continuous valuation is a Scott-continuous
map $\nu \colon \Open X \to \creal$ such that $\nu (\emptyset)=0$
(\emph{strictness}) and, for all $U, V \in \Open X$,
$\nu (U \cup V) + \nu (U \cap V) = \nu (U) + \nu (V)$
(\emph{modularity}).  There is a notion of integral
$\int_{x \in X} h (x) d\nu$, or briefly $\int h d\nu$, for every
$h \in \Lform X$, which can again be defined by a Choquet formula.
The map $h \in \Lform X \mapsto \int h d\nu$ is Scott-continuous and
\emph{linear}.  By definition, a linear map
$G \colon \Lform X \to \creal$ satisfies $G (h+h') = G (h)+G(h')$ and
$G (\alpha.h) = \alpha.G(h)$ for all $\alpha \in \realp$,
$h, h' \in \Lform X$.  Conversely, any Scott-continuous linear map
$G \colon \Lform X \to \creal$ is of the form $h \mapsto \int h d\nu$
for a unique continuous valuation $\nu$, given by
$\nu (U) \eqdef G (\chi_U)$, where $\chi_U$ is the characteristic map
of $U$ ($\chi_U (x) \eqdef 1$ if $x \in U$, $0$ otherwise).

\section{Rags, d-rags and continuous d-rags}
\label{sec:rugs-d-rugs}

\begin{definition}
  \label{defn:rag}
  A \emph{rag} is a tuple $(R, 0, +, 1, \times)$ (or simply $R$) where
  $(R, 0, +)$ is an Abelian monoid, $(R, 1, \times)$ is a monoid, and
  $\times$ distributes over $+$.  An \emph{Abelian rag} is a rag whose
  multiplication $\times$ is commutative.
\end{definition}
A \emph{semi-ring}, or \emph{rig}, is a rag which satisfies the extra
law $0 \times r = r \times 0 = 0$.  $\creal$, for example, is an
Abelian rig, where $+$ and $\times$ are as usual, modulo the
convention that $0 \times (+\infty) = 0$.  We will see that $\Icreal$
is a rag, but not a rig.

We also need some topological structure.
\begin{definition}
  \label{defn:drag}
  A \emph{d-rag} is a rag $R$ together with an ordering that makes it
  a dcpo, in such a way that $+$ and $\times$ are Scott-continuous.
\end{definition}

$\creal$ is a d-rag. In order to turn $\Icreal$ into a d-rag, we
define its $0$ element as $[0, 0]$; addition by
$[a, b] + [c, d] \eqdef [a+c, b+d]$; its $1$ element as $[1, 1]$; and
product by
$[a, b] \times [c, d] \eqdef [a \cdot_\ell c, b \cdot_r d]$.  The
operations $\cdot_\ell$ and $\cdot_r$ are product operations (for the
$\ell$eft and $r$ight part, respectively), and are defined so that
$x \cdot_\ell y$ and $x \cdot_r y$ are equal to the usual product $xy$
unless one of $x$, $y$ is equal to $0$ and the other is equal to
$+\infty$.  We need two distinct, left and right, product operations
in order to ensure Scott-continuity, as we now explain.  We must
define $0 \cdot_\ell (+\infty)$ ($=(+\infty) \cdot_\ell 0$) as $0$,
since $0 \cdot_\ell (+\infty)$ must be equal to
$\sup_{r \in \realp} 0 \cdot_\ell r = 0$.  Symmetrically, we must
define $0 \cdot_r (+\infty)$ ($=(+\infty) \cdot_r 0$) as $+\infty$,
because $0 \cdot_r (+\infty)$ must be equal to
$\inf_{r > 0} r \cdot_r (+\infty) = +\infty$.  With those choices, we
have the following easily checked fact.

\begin{lemma}
  \label{lemma:IR:drag}
  $\Icreal$ is an Abelian d-rag.
\end{lemma}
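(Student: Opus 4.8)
The plan is to verify the defining data of an Abelian d-rag (Definition~\ref{defn:rag} and Definition~\ref{defn:drag}) by reducing every clause to a corresponding fact about $\creal$, the key being that each of the two auxiliary products $\cdot_\ell$ and $\cdot_r$ turns $\creal$ into a commutative monoid.

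\textbf{Algebraic structure.} First I would record that $\cdot_\ell$ is exactly the usual multiplication of the Abelian rig $\creal$ (with the convention $0 \times (+\infty) = 0$ recalled before Definition~\ref{defn:drag}), whereas $\cdot_r$ is the usual multiplication of $\creal$ but with the opposite convention $0 \cdot_r (+\infty) = +\infty$. A short case analysis over triples formed from $0$, $+\infty$ and finite positive reals shows that $\cdot_r$ is associative and commutative with unit $1$, and distributes over $+$; so $(\creal, 1, \cdot_r)$ is a commutative monoid, just like $(\creal, 1, \cdot_\ell)$. Since $[a,b] + [c,d] = [a+c, b+d]$ is computed coordinatewise, $(\Icreal, [0,0], +)$ inherits associativity, commutativity and the unit law from $(\creal, 0, +)$; one only checks that $a+c \le b+d$ whenever $a \le b$ and $c \le d$, so the sum is again in $\Icreal$. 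For the product $[a,b] \times [c,d] = [a \cdot_\ell c, b \cdot_r d]$ the left endpoints are governed entirely by $\cdot_\ell$ and the right endpoints entirely by $\cdot_r$, so associativity, commutativity, the unit law $[1,1] \times [a,b] = [a,b]$, and distributivity of $\times$ over $+$ all follow coordinate by coordinate from the matching laws for $\cdot_\ell$ and $\cdot_r$ on $\creal$. Finally, to see that $\times$ really maps into $\Icreal$: one has $\cdot_\ell \le \cdot_r$ pointwise (they differ only on the pair $\{0, +\infty\}$, where $0 \le +\infty$) and $\cdot_r$ is monotone in each argument, so for $0 \le a \le b$ and $0 \le c \le d$ we get $a \cdot_\ell c \le a \cdot_r c \le b \cdot_r d$, whence $[a \cdot_\ell c, b \cdot_r d] \in \Icreal$.

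\textbf{The dcpo.} Here I would simply note that $\Icreal$ is a sub-dcpo of $\IRbb$: it consists of the intervals $[a,b]$ with $a \ge 0$, and if all $a_i \ge 0$ then $\sup_i a_i \ge 0$, so the supremum $[\sup_i a_i, \inf_i b_i]$ in $\IRbb$ of a directed family whose members lie in $\Icreal$ again lies in $\Icreal$. Hence $\Icreal$ is a dcpo, with directed suprema computed exactly as in $\IRbb$.

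\textbf{Scott-continuity of $+$ and $\times$.} Since a monotone map out of a product of dcpos is Scott-continuous iff it is Scott-continuous in each variable separately, and since $+$ and $\times$ are commutative, it suffices to fix the second argument $[c,d]$ and examine the unary maps $[a,b] \mapsto [a+c, b+d]$ and $[a,b] \mapsto [a \cdot_\ell c, b \cdot_r d]$. A directed family $([a_i,b_i])_i$ in $\Icreal$ consists of a directed family $(a_i)_i$ of left endpoints and a filtered family $(b_i)_i$ of right endpoints, with supremum $[\sup_i a_i, \inf_i b_i]$. Thus for any unary map of the shape $[a,b] \mapsto [g(a), h(b)]$ with $g, h$ monotone on $\creal$, monotonicity for the reverse-inclusion order is automatic, and preservation of this directed supremum amounts to: $g$ preserves directed suprema and $h$ preserves filtered infima, both in $\creal$. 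For $+$ this is the standard Scott-continuity of $x \mapsto x+c$ together with the identity $\inf_i (b_i + d) = (\inf_i b_i) + d$. For $\times$, $g = (x \mapsto x \cdot_\ell c)$ is Scott-continuous because $\cdot_\ell$ is the Scott-continuous multiplication of the d-rag $\creal$, while $h = (x \mapsto x \cdot_r d)$ preserves filtered infima by the case analysis already previewed in the text before the lemma (in particular $0 \cdot_r (+\infty) = \inf_{r > 0} r \cdot_r (+\infty) = +\infty$), split according to whether $d$ is $0$, a finite positive real, or $+\infty$. Getting these boundary cases with $0$ and $+\infty$ exactly right is the only delicate point; everything else is routine bookkeeping.
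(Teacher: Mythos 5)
Your verification is correct, and it is essentially the argument the paper has in mind: the paper states the lemma as an "easily checked fact" with the proof omitted, the intended check being exactly your coordinatewise reduction, with $\cdot_\ell$ governing left endpoints (preservation of directed suprema) and $\cdot_r$ governing right endpoints (preservation of filtered infima), plus the routine monoid, distributivity, and closure verifications.
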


\section{Continuous $R$-Valuations}
\label{sec:cont-r-valu}

Let $R$ be a fixed Abelian d-rag.  One might be tempted to define
continuous $R$-valuations on a space $X$ as Scott-continuous maps from
$\Open X$ to $R$ satisfying some appropriate forms of strictness and
modularity, but, as we have argued in the introduction, this is
fraught with difficulties when the additive unit is not the least
element of $R$.

Since continuous valuations on $X$ correspond bijectively to linear
Scott-continuous maps from $\Lform X$ to $\creal$, another route is to
define continuous $R$-valuations as certain maps from a variant of
$\Lform X$ to $R$ instead of $\creal$.  As we will see, this leads to
a streamlined theory.

Given any space $X$, let $\Lform^R X$ be the dcpo of all continuous
maps from $X$ to $R$, with the pointwise ordering.  With pointwise
addition and multiplication, $\Lform^R X$ is also an Abelian d-rag.
\begin{definition}[Continuous $R$-valuation]
  \label{defn:Rval}
  A \emph{continuous $R$-valuation} on a space $X$ is a
  Scott-continuous map $\nu \colon \Lform^R X \to R$ that is
  \emph{linear} in the sense that
  $\nu (\mathbf a \times h) = \mathbf a \times \nu (h)$
  (\emph{homogeneity}) and $\nu (h+h') = \nu (h) + \nu (h')$
  (\emph{additivity}) for all $\mathbf a \in R$,
  $h, h' \in \Lform^R X$.  We write $\Val^R X$ for the dcpo of all
  continuous $R$-valuations on $X$, with the pointwise ordering.
\end{definition}
\begin{remark}
  \label{rem:val}
  When $R = \creal$, $\Lform^R X = \Lform X$, so that $\Val^R X$ can
  be equated with the dcpo $\Val X$ of ordinary continuous valuations.
\end{remark}
In order to help understand the definition, it is profitable to use
the integral notation $\int h d\nu$ to mean $\nu (h)$.  Hence
Definition~\ref{defn:Rval} requires that
$\int (\mathbf a\times h) d\nu = \mathbf a \times \int h d\nu$ and
$\int (h+h') d\nu = \int h d\nu + \int h' d\nu$.

Beware that the constant $0$ map from $\Lform^R X$ to $R$ is
\emph{not} a continuous $R$-valuation, unless $R$ is a rig:
homogeneity would imply $0 = \mathbf a \times 0$, which fails in
$\Icreal$, for example.
Also, we do not require $\nu (\mathbf 0)=0$ in
Definition~\ref{defn:Rval}, where $\mathbf 0$ is the constant $0$ map.
This would be a consequence of homogeneity if $R$ were a rig.

Addition and multiplication by scalars in $R$ are defined pointwise on
$\Val^R X$.  This allows us to make sense of the following definition.
\begin{definition}
  \label{defn:simple}
  The \emph{$R$-Dirac mass at $x \in X$} is the continuous
  $R$-valuation $\delta_x \colon h \in \Lform^R X \mapsto h (x) \in R$.
  
  An \emph{elementary $R$-valuation} on $X$ is a continuous
  $R$-valuation of the form $\sum_{i=1}^n r_i \times \delta_{x_i}$,
  where $n \geq 1$, each $r_i$ is in $R$, and mapping each
  $h \in \Lform^R X$ to $\sum_{i=1}^n r_i \times h (x_i)$.

  We write $\Val^R_\fin X$ for the poset of elementary $R$-valuations on
  $X$, and $\Val^R_m X$ for the inductive closure of $\Val^R_\fin X$ in
  $\Val^R X$.  The elements of $\Val^R_m X$ are called the
  \emph{minimal $R$-valuations} on $X$.
\end{definition}
Our definition of the $R$-Dirac mass reads $\int h d\delta_x = h (x)$
in integral notation.

\begin{remark}
  \label{rem:simple}
  A \emph{simple valuation} on $X$ is one of the form
  $\sum_{i=1}^n r_i \delta_{x_i}$, where $n \in \nat$, each point
  $x_i$ is in $X$, and each coefficient $r_i$ is in $\realp$.  While
  continuous valuations can be equated with continuous $R$-valuations
  with $R = \creal$ (Remark~\ref{rem:val}), simple valuations and
  elementary $R$-valuations are closely related but different
  concepts, even when $R = \creal$.  First, $r \delta_x$ is an
  elementary $\creal$-valuation even when $r = +\infty$, but is a
  simple valuation only if $r < +\infty$.  Second, we require
  $n \geq 1$ in the definition of elementary $R$-valuations, but $n$
  can be equal to $0$ in the definition of a simple valuation.  The
  reason why we require $n \geq 1$ is that the constant $0$ map is not
  a continuous $R$-valuation in general, as noticed above.
\end{remark}

The \emph{inductive closure} of a subset $A$ of a dcpo $Z$ is the
smallest subset of $Z$ that contains $A$ and is closed under directed
suprema.  It is obtained by taking all directed suprema of elements of
$A$, all directed suprema of elements obtained in this fashion, and
proceeding this way transfinitely.

A \emph{pointed} dcpo is one with a least element $\bot$.
\begin{proposition}
  \label{prop:V:pointed}
  Let $R$ be an Abelian $d$-rag with a least element $\bot$ that is
  absorbing for multiplication, viz., $\bot \times \mathbf a = \bot$
  for every $\mathbf a \in R$.  For every non-empty space $X$, the
  constant map $\tilde\bot \colon h \in \Lform^R X \mapsto \bot$ is
  the least element of $\Val^R X$, and also of $\Val^R_m X$.
  Thus,  $\Val^R X$ and $\Val^R_m X$ are pointed dcpos.
\end{proposition}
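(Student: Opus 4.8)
The plan is to prove this in three stages: check that $\tilde\bot$ is indeed a continuous $R$-valuation, check that it lies pointwise below every element of $\Val^R X$, and check that it already belongs to $\Val^R_m X$. For the first stage, Scott-continuity of the constant map $\tilde\bot$ is immediate, since the supremum of a constant directed family is that constant. Homogeneity reads $\tilde\bot(\mathbf a \times h) = \bot$ on the left and $\mathbf a \times \tilde\bot(h) = \mathbf a \times \bot$ on the right; these agree because $\mathbf a \times \bot = \bot \times \mathbf a = \bot$, using commutativity of $\times$ and the absorption hypothesis. Additivity reads $\tilde\bot(h+h') = \bot$ versus $\tilde\bot(h) + \tilde\bot(h') = \bot + \bot$, so I need the identity $\bot + \bot = \bot$; this follows because $\bot \le 0$ (as $\bot$ is least) and $+$ is monotone, whence $\bot + \bot \le \bot + 0 = \bot$, the reverse inequality being trivial. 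Hence $\tilde\bot \in \Val^R X$.

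For the second stage, note that for every $\nu \in \Val^R X$ and every $h \in \Lform^R X$ we have $\tilde\bot(h) = \bot \le \nu(h)$, since $\bot$ is the least element of $R$; as the order on $\Val^R X$ is the pointwise one, $\tilde\bot \le \nu$, so $\tilde\bot$ is the least element of $\Val^R X$. For the third stage I use that $X$ is non-empty to pick a point $x_0 \in X$, and observe that the \emph{elementary} $R$-valuation $\bot \times \delta_{x_0}$ sends each $h \in \Lform^R X$ to $\bot \times h(x_0) = \bot$, again by commutativity and absorption; thus $\bot \times \delta_{x_0} = \tilde\bot$. Therefore $\tilde\bot \in \Val^R_\fin X \subseteq \Val^R_m X$, and since $\Val^R_m X \subseteq \Val^R X$ and $\tilde\bot$ is already least in the larger poset, it is a fortiori least in $\Val^R_m X$. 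Finally, $\Val^R X$ is a dcpo by definition and $\Val^R_m X$, being an inductive closure, is closed under directed suprema and hence a sub-dcpo; so both are pointed dcpos.

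As for the main obstacle: there is essentially none beyond bookkeeping. The one place where a genuine difficulty could have arisen is the membership $\tilde\bot \in \Val^R_m X$, since $\Val^R_m X$ is defined by a possibly transfinite iteration of directed suprema of elementary $R$-valuations; but the absorption hypothesis collapses this entirely, making $\tilde\bot$ literally an elementary $R$-valuation. The only substantive verifications are the d-rag-level identities $\mathbf a \times \bot = \bot$ and $\bot + \bot = \bot$ used above, the latter being exactly where the interplay between the additive unit $0$ and the least element $\bot$ matters.
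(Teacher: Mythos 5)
Your proof is correct and follows essentially the same route as the paper, whose entire argument is the observation that absorption makes $\tilde\bot$ equal to the elementary $R$-valuation $\bot \times \delta_x$ for any fixed $x \in X$, hence a member (and clearly the least element) of both $\Val^R_m X$ and $\Val^R X$. Your first stage, verifying directly that $\tilde\bot$ is linear and Scott-continuous (via $\bot + \bot = \bot$ and $\mathbf a \times \bot = \bot$), is harmless but redundant, since the identification with $\bot \times \delta_{x_0}$ already exhibits it as an elementary, hence continuous, $R$-valuation.
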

\begin{proof}
  Since $\bot$ is absorbing, $\tilde\bot$ is equal to
  $\bot \times \delta_x$, for any fixed $x \in X$, hence is in
  $\Val^R_m X$.  It is clearly least in $\Val^R X$ and in
  $\Val^R_m X$.  The last claim is obvious.
\end{proof}
Proposition~\ref{prop:V:pointed} applies to the case $R = \creal$,
where the bottom element is $0$, and $0 \times r = r$ for every $r$
(including $+\infty$).  It also applies to the case $R = \Icreal$, where
the bottom element is $[0, +\infty]$, and again $[0, +\infty] \times
[a, b] = [0 \cdot_\ell a, (+\infty) \cdot_r b] = [0, +\infty]$.

  \begin{remark}
    There is a very similar notion of integration of interval-valued
    functions, yielding interval values, due to Edalat
    \cite{edalat09}, which he uses to define interval-valued integrals
    of measurable functions.  The purpose is to set up a computable
    framework for Lebesgue measure and integration theory.  The two
    integrals considered in \cite{edalat09} and in the present paper
    are similar, but different in a subtle way.  There are small
    differences, such as the fact that Edalat allows one to integrate
    functions with values in $\IR$, whereas we only integrate with
    values in $\Icreal$, but the main difference is best illustrated
    by the following example.  Let $\lambda$ be Lebesgue measure on
    $[0, 1]$, and $h_n \colon [0, 1] \to \Icreal$ map every
    $x \in [0, 1/2^n]$ to $[0, \infty]$ and every $x \in ]1/2^n, 1]$
    to $[0, 0]$.  The maps $h_n$ form a chain whose supremum is the
    function $h$ that maps every element of $]0, 1]$ to $[0, 0]$ and
    $0$ to $[0, \infty]$.  Using Edalat's integral, we have
    $\int h_n d\lambda = [0, \infty]$ for every $n \in \nat$, but
    $\int h d\lambda = [0, 0]$.  This shows that Edalat's integral is
    \emph{not} a continuous $\Icreal$-valuation in general.  We will
    propose a way to fix this issue in Section~\ref{sec:meas-as-cont}.
\end{remark}

\section{Monads of continuous $R$-valuations}
\label{sec:monads-continuous-r}

We fix an Abelian d-rag $R$.  We will see that $\Val^R$ and $\Val^R_m$
define strong monads on the category $\Dcpo$ of dcpos and
Scott-continuous maps.  This is essential in describing probabilistic
effects, following Moggi's seminal work \cite{moggi89,moggi91}.  We
use Manes' presentation of monads \cite{Manes:algth}: a monad
$(T, \eta, \_^\dagger)$ on a category $\catc$ is a function $T$
mapping objects of $\catc$ to objects of $\catc$, a collection of
morphisms $\eta_X \colon X \to TX$, one for each object $X$ of
$\catc$, and called the \emph{unit}, and for every morphism
$f \colon X \to TY$, a morphism $f^\dagger \colon TX \to TY$ called
the \emph{extension} of $f$; those are required to satisfy the axioms:
\begin{enumerate}
\item $f^\dagger \circ \eta_X = f$;
\item $\eta_X^\dagger = \identity {TX}$;
\item $(g^\dagger \circ f)^\dagger = g^\dagger \circ f^\dagger$.
\end{enumerate}
Then $T$ extends to an endofunctor, acting on morphisms through
$T f = (\eta_Y \circ f)^\dagger$.
\begin{proposition}
  \label{prop:VX:monad}
  The triple $(\Val^R, \eta, \_^\dagger)$ is a monad on the category
  of dcpos and Scott-continuous maps, where
  $\eta_X \colon X \to \Val^R X$ maps $x$ to $\delta_x$, and for every
  $f \colon X \to \Val^R Y$, $f^\dagger$ is defined by
  $f^\dagger (\nu) (k) \eqdef \nu (\lambda x \in X . f (x) (k))$ for
  every $\nu \in \Val^R X$, and for every $k \in \Lform^R Y$.
\end{proposition}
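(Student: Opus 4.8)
The plan is to verify the three Manes axioms directly, after first establishing the one non-obvious well-definedness fact: that $f^\dagger(\nu)$ is indeed a continuous $R$-valuation on $Y$ whenever $\nu \in \Val^R X$ and $f \colon X \to \Val^R Y$ is Scott-continuous. For this I would fix $k \in \Lform^R Y$ and observe that the map $x \mapsto f(x)(k)$ is Scott-continuous from $X$ to $R$: it is the composite of $f \colon X \to \Val^R Y$, evaluation-at-$k$ from $\Val^R Y$ to $R$ (which is Scott-continuous since the order on $\Val^R Y$ is pointwise and $\Val^R Y$ is a dcpo), hence lies in $\Lform^R X$, so $\nu(\lambda x.\,f(x)(k))$ makes sense. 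Scott-continuity of $f^\dagger(\nu)$ in $k$ then follows because $k \mapsto (\lambda x.\,f(x)(k))$ is Scott-continuous from $\Lform^R Y$ to $\Lform^R X$ (suprema in $\Lform^R$ are pointwise, and each $f(x)$ is Scott-continuous) composed with the Scott-continuous $\nu$. Linearity (homogeneity and additivity) of $f^\dagger(\nu)$ transfers from linearity of $\nu$: e.g. $\lambda x.\,f(x)(\mathbf a \times k) = \lambda x.\,\mathbf a \times f(x)(k) = \mathbf a \times (\lambda x.\,f(x)(k))$ using homogeneity of each $f(x)$ and pointwise multiplication in $\Lform^R X$, and then $\nu$ pulls the scalar out; additivity is analogous. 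One should also note $f^\dagger$ itself is Scott-continuous as a map $\Val^R X \to \Val^R Y$, again because everything in sight is a pointwise supremum.

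Next I would check axiom (1), $f^\dagger \circ \eta_X = f$. For $x \in X$ and $k \in \Lform^R Y$ we compute $f^\dagger(\delta_x)(k) = \delta_x(\lambda y.\,f(y)(k)) = (\lambda y.\,f(y)(k))(x) = f(x)(k)$, using the defining equation $\int h\,d\delta_x = h(x)$; since this holds for all $k$, $f^\dagger(\delta_x) = f(x)$. For axiom (2), $\eta_X^\dagger = \mathrm{id}_{\Val^R X}$: for $\nu \in \Val^R X$ and $h \in \Lform^R X$, $\eta_X^\dagger(\nu)(h) = \nu(\lambda x.\,\eta_X(x)(h)) = \nu(\lambda x.\,\delta_x(h)) = \nu(\lambda x.\,h(x)) = \nu(h)$, since $\lambda x.\,h(x)$ is just $h$.

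Finally, axiom (3), $(g^\dagger \circ f)^\dagger = g^\dagger \circ f^\dagger$ for $f \colon X \to \Val^R Y$ and $g \colon Y \to \Val^R Z$. Fix $\nu \in \Val^R X$ and $k \in \Lform^R Z$. The right-hand side gives $g^\dagger(f^\dagger(\nu))(k) = f^\dagger(\nu)\bigl(\lambda y.\,g(y)(k)\bigr) = \nu\bigl(\lambda x.\,f(x)(\lambda y.\,g(y)(k))\bigr)$. The left-hand side gives $(g^\dagger \circ f)^\dagger(\nu)(k) = \nu\bigl(\lambda x.\,(g^\dagger \circ f)(x)(k)\bigr) = \nu\bigl(\lambda x.\,g^\dagger(f(x))(k)\bigr) = \nu\bigl(\lambda x.\,f(x)(\lambda y.\,g(y)(k))\bigr)$, using the definition of $g^\dagger$ applied to $f(x) \in \Val^R Y$. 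The two sides coincide, so the axiom holds. I do not anticipate a genuine obstacle here; the only place demanding care is the well-definedness argument in the first paragraph, where one must be scrupulous that each $\lambda$-abstraction actually lands in the relevant function dcpo $\Lform^R$ and is Scott-continuous, so that $\nu$ and the various evaluation maps may legitimately be applied — everything else is a routine unfolding of the definitions together with the identity $\int h\,d\delta_x = h(x)$.
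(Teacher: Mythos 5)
Your proposal is correct and takes essentially the same route as the paper: establish that $f^\dagger(\nu)$ lands in $\Val^R Y$ (Scott-continuity via pointwise suprema in $\Val^R Y$ and $\Lform^R$, linearity transferred from that of each $f(x)$ and of $\nu$), that $f^\dagger$ is Scott-continuous, and then verify the three Manes axioms by unfolding the definitions together with $\delta_x(h)=h(x)$ --- computations the paper simply declares immediate. The only point you leave implicit is the (routine) Scott-continuity of $\eta_X \colon x \mapsto \delta_x$ itself, which the paper likewise dispatches in one line.
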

\begin{proof}
  Verifying that $\eta$ is Scott-continuous is routine.

  Let us look at $f^\dagger$.  For every $k \in \Lform^R Y$, it is
  easy to see that $\lambda x \in X . f (x) (k)$ is Scott-continuous,
  because $f$ is Scott-continuous and directed suprema are computed
  pointwise in $\Val^R Y$.  Hence $\nu (\lambda x \in X . f (x) (k))$
  makes sense.  The map
  $f^\dagger (\nu) \colon k \mapsto \nu (\lambda x \in X . f (x) (k))$
  is also Scott-continuous, since $f (x)$ is Scott-continuous for
  every $x \in X$ and since $\nu$ is itself Scott-continuous.  It is
  easy to see that $f^\dagger (\nu)$ is linear, too, because $f (x)$
  is linear for every $x \in X$, and because $\nu$ is linear.  Hence
  $f^\dagger (\nu)$ is an element of $\Val^R Y$ for every
  $\nu \in \Val^R X$.  Finally, $f^\dagger$ itself is
  Scott-continuous, as one easily checks.

  The monad equations (i), (ii) and (iii) are immediate.
\end{proof}
\begin{fact}
  \label{fact:Vf}
  The $\Val^R$ functor acts on morphisms by
  $\Val^R (f) (\nu) (k) = \nu (k \circ f)$.
\end{fact}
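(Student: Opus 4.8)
The plan is to unfold the definitions and exploit the fact that the monad's functorial action is forced by the formula $\Val^R(f) = (\eta_Y \circ f)^\dagger$, as stated immediately after the monad axioms. So I would start by writing $\Val^R(f)(\nu)(k) = (\eta_Y \circ f)^\dagger(\nu)(k)$ for a Scott-continuous map $f \colon X \to Y$, a continuous $R$-valuation $\nu \in \Val^R X$, and a test function $k \in \Lform^R Y$.

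Next I would apply the definition of $\_^\dagger$ from Proposition~\ref{prop:VX:monad}, instantiated at the morphism $\eta_Y \circ f \colon X \to \Val^R Y$. That definition gives $(\eta_Y \circ f)^\dagger(\nu)(k) = \nu\bigl(\lambda x \in X .\, (\eta_Y \circ f)(x)(k)\bigr)$. Then I unfold the inner expression: $(\eta_Y \circ f)(x) = \eta_Y(f(x)) = \delta_{f(x)}$, and by the definition of the $R$-Dirac mass, $\delta_{f(x)}(k) = k(f(x)) = (k \circ f)(x)$. Hence $\lambda x \in X .\, (\eta_Y \circ f)(x)(k)$ is precisely the function $k \circ f$, which lies in $\Lform^R X$ because $k$ and $f$ are both Scott-continuous.

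Substituting back, $\Val^R(f)(\nu)(k) = \nu(k \circ f)$, which is exactly the claimed formula. I would remark in passing that one should check $k \circ f \in \Lform^R X$ so that $\nu(k \circ f)$ is well-defined, but this is immediate from composition of Scott-continuous maps.

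There is no real obstacle here: the statement is a direct computation that chases the definition of the functorial action through the definition of $\_^\dagger$ and of $\delta_x$, so the proof is a two- or three-line calculation. The only thing to be careful about is not to confuse the roles of $X$ and $Y$ when instantiating $\_^\dagger$ (the morphism fed to $\_^\dagger$ must have source $X$ and target $\Val^R Y$, which $\eta_Y \circ f$ does).
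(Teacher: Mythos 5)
Your proposal is correct and is exactly the computation the paper has in mind: the paper states this as a Fact without proof precisely because it follows by unfolding $\Val^R(f) = (\eta_Y \circ f)^\dagger$, the definition of $\_^\dagger$, and $\delta_{f(x)}(k) = k(f(x))$, just as you did. Your side remark that $k \circ f \in \Lform^R X$ is the only well-definedness point to check is also right.
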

In integral notation, this means $\nu' \eqdef \Val^R (f) (\nu)$
satisfies $\int k d\nu' = \int (k \circ f) d\nu$.  This is a formula
that is typical of the image measure of $\nu$ by $f$, where $\nu$ is a
measure.  We may think of $\Val^R (f) (\nu)$ as the image of the
continuous $R$-valuation $\nu$ by $f$.


We will now show that $\Val^R_m$ defines a submonad of $\Val^R$.  To
this end, we need to know more about inductive closures.  A
\emph{d-closed subset} of a dcpo $Z$ is a subset $C$ such that the
supremum of every directed family of elements of $C$, taken in $Z$, is
in $C$.  The d-closed subsets form the closed subsets of a topology
called the \emph{d-topology} \cite[Section~5]{keimel08}, and the
inductive closure of a subset $A$ coincides with its \emph{d-closure}
$cl_d (A)$, namely its closure in the d-topology.

We note that every Scott-continuous map is continuous with respect to
the underlying d-topologies.  This is easily checked, or see
\cite[Lemma~5.3]{keimel08}.  In particular:
\begin{fact}
  \label{fact:cld}
  For every Scott-continuous map $f \colon \Val^R X \to \Val^R Y$, for
  every $A \subseteq \Val^R X$, $f (cl_d (A)) \subseteq cl_d (f (A))$.
\end{fact}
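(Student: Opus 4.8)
The plan is to reduce Fact~\ref{fact:cld} to the elementary topological observation that continuous maps do not increase closures, using the identification of the inductive closure with the d-closure and the fact that Scott-continuous maps are d-continuous.

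First I would record the two ingredients already available in the excerpt. On the one hand, the inductive closure of any subset $A$ of a dcpo coincides with its d-closure $cl_d(A)$, i.e., its closure in the d-topology (the topology whose closed sets are the d-closed subsets). On the other hand, every Scott-continuous map between dcpos is continuous when both sides carry their d-topologies; this is the content of \cite[Lemma~5.3]{keimel08}, and it is the only nontrivial input. In particular the given map $f \colon \Val^R X \to \Val^R Y$ is d-continuous.

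Next I would invoke the standard fact that for a continuous map $f \colon Z \to W$ between topological spaces and any $A \subseteq Z$, one has $f(\overline A) \subseteq \overline{f(A)}$, where the closures are taken in $Z$ and $W$ respectively. The one-line argument is: $\overline{f(A)}$ is closed in $W$, so $f^{-1}(\overline{f(A)})$ is closed in $Z$ by continuity of $f$; it contains $A$ since $f(A) \subseteq \overline{f(A)}$; hence it contains $\overline A$; applying $f$ gives $f(\overline A) \subseteq \overline{f(A)}$.

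Finally I would specialize this to $Z = \Val^R X$ and $W = \Val^R Y$ equipped with their d-topologies and to the d-continuous map $f$, and rewrite the closure operator as $cl_d(-)$, obtaining $f(cl_d(A)) \subseteq cl_d(f(A))$, which is exactly the statement. There is essentially no obstacle here: all the real work has been pushed into the cited lemma that Scott-continuity implies d-continuity, and the remainder is the generic behaviour of closure operators under continuous maps, so the proof is a couple of lines.
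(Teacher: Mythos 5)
Your proof is correct and follows the same route as the paper: the paper derives this Fact directly from the observation (citing \cite[Lemma~5.3]{keimel08}) that Scott-continuous maps are continuous for the d-topologies, with the generic closure inequality $f(\overline A)\subseteq\overline{f(A)}$ left implicit. You have merely spelled out that standard topological step, which is fine.
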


\begin{lemma}
  \label{lemma:VXm:drag}
  For every space $X$, $\Val^R_m X$ is closed under addition and
  multiplication by elements of $R$, as computed in the larger space
  $\Val^R X$.
\end{lemma}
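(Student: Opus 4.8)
The plan is to show that the elementary $R$-valuations $\Val^R_\fin X$ are closed under the two operations, and then to transfer this closure to the inductive closure $\Val^R_m X = cl_d(\Val^R_\fin X)$ by a topological continuity argument using the d-topology.

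First I would handle addition and scalar multiplication on $\Val^R_\fin X$ directly. If $\nu = \sum_{i=1}^n r_i \times \delta_{x_i}$ and $\nu' = \sum_{j=1}^{m} r'_j \times \delta_{x'_j}$ are elementary, then their pointwise sum is $\sum_{i=1}^n r_i \times \delta_{x_i} + \sum_{j=1}^{m} r'_j \times \delta_{x'_j}$, which is again of the required form (concatenate the two lists of points and coefficients; note $n+m \geq 1$), so it is elementary. Similarly, for $\mathbf a \in R$, $\mathbf a \times \nu = \sum_{i=1}^n (\mathbf a \times r_i) \times \delta_{x_i}$ is elementary, using commutativity and associativity of $\times$ in $R$ to push the scalar into each coefficient. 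Hence $\Val^R_\fin X$ is closed under $+$ and under multiplication by any fixed $\mathbf a \in R$.

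Next I would lift this to $\Val^R_m X$. Fix $\mathbf a \in R$; the map $m_{\mathbf a} \colon \Val^R X \to \Val^R X$ sending $\nu$ to $\mathbf a \times \nu$ (pointwise product) is Scott-continuous, because $\times$ is Scott-continuous in $R$ and suprema in $\Val^R X$ are computed pointwise. By Fact~\ref{fact:cld}, $m_{\mathbf a}(cl_d(\Val^R_\fin X)) \subseteq cl_d(m_{\mathbf a}(\Val^R_\fin X)) \subseteq cl_d(\Val^R_\fin X) = \Val^R_m X$, using the previous paragraph for the middle inclusion. This gives closure under scalar multiplication. For addition I would argue similarly but with a map of two arguments: the addition map $+ \colon \Val^R X \times \Val^R X \to \Val^R X$ is Scott-continuous, hence d-continuous, and one shows $cl_d(A) \times cl_d(B) \subseteq cl_d(A \times B)$ in a product of dcpos — equivalently, fixing $\nu_0 \in \Val^R_\fin X$ first, the map $\nu \mapsto \nu + \nu_0$ is Scott-continuous so sends $\Val^R_m X$ into $cl_d(\Val^R_\fin X + \nu_0) \subseteq \Val^R_m X$; then fixing $\nu \in \Val^R_m X$, the map $\nu_0 \mapsto \nu + \nu_0$ is Scott-continuous and sends $\Val^R_\fin X$ into $\Val^R_m X$, so it sends $cl_d(\Val^R_\fin X) = \Val^R_m X$ into $cl_d(\Val^R_m X) = \Val^R_m X$. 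Iterating the one-variable argument twice avoids having to prove a product-of-d-closures lemma.

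The main obstacle, such as it is, is the two-variable nature of addition: one cannot simply invoke Fact~\ref{fact:cld} once, and must either establish that the d-topology on a product of dcpos is well-behaved with respect to d-closures, or — as I would prefer — reduce to the one-variable case by the two-stage fixing argument above, checking at each stage that the relevant partial-application map is genuinely Scott-continuous (which it is, since $+$ on $R$ is Scott-continuous and everything is computed pointwise). Beyond that, the verification that sums and scalar multiples of elementary $R$-valuations are again elementary is routine bookkeeping with the algebraic laws of the Abelian d-rag $R$.
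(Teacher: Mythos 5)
Your proposal is correct and follows essentially the same route as the paper: closure of $\Val^R_\fin X$ under the operations, plus the d-continuity of Scott-continuous maps (Fact~\ref{fact:cld}), with the same two-stage partial-application trick to handle the two-variable addition (first fixing an elementary summand, then fixing a minimal one). The only cosmetic difference is that you spell out the routine verification that sums and scalar multiples of elementary $R$-valuations are elementary, which the paper leaves implicit.
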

\begin{proof}
  Let us deal with addition.  Multiplication is similar.
  
  For every elementary $R$-valuation $\mu$, the map
  $f_\mu \colon \nu \in \Val^R X \mapsto \mu + \nu$ is
  Scott-continuous, and maps elementary $R$-valuations to elementary
  $R$-valuations.  By Fact~\ref{fact:cld} with
  $A \eqdef \Val^R_\fin X$, $f_\mu$ maps all elements of
  $cl_d (A) = \Val^R_m X$ to
  $cl_d (f_\mu (A)) \subseteq cl_d (\Val^R_\fin X) = \Val^R_m X$.

  It follows that for every minimal $R$-valuation $\nu$, the map
  $g \colon \mu \in \Val^R X \mapsto \mu+\nu = f_\mu (\nu)$ maps
  elementary $R$-valuations to minimal $R$-valuations.  We observe that
  $g$ is also Scott-continuous.  By Fact~\ref{fact:cld} with the same
  $A$ as above, $g$ maps all elements of $cl_d (A) = \Val^R_m X$ to
  $cl_d (g (A)) \subseteq cl_d (\Val^R_m X) = \Val^R_m X$.  Hence, for
  every $\nu \in \Val^R_m X$, for every $\mu \in \Val^R_m X$, $\mu +
  \nu$ is in $\Val^R_m X$.
\end{proof}

\begin{lemma}
  \label{lemma:VXm:dagger}
  For any Scott-continuous map $f \colon X \to \Val^R_m Y$,
  $f^\dagger$ is a Scott-continuous map from $\Val^R_m X$ to $\Val^R_m Y$.
\end{lemma}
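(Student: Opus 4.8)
The goal is to show that for any Scott-continuous $f \colon X \to \Val^R_m Y$, the extension $f^\dagger$ restricts to a Scott-continuous map $\Val^R_m X \to \Val^R_m Y$. Since $f^\dagger \colon \Val^R X \to \Val^R Y$ is already known to be Scott-continuous by Proposition~\ref{prop:VX:monad}, the only thing to check is that $f^\dagger$ sends $\Val^R_m X$ into $\Val^R_m Y$; Scott-continuity of the restriction is then inherited. So the plan is to invoke Fact~\ref{fact:cld} with $A \eqdef \Val^R_\fin X$ once more: since $\Val^R_m X = cl_d(A)$, we get $f^\dagger(cl_d(A)) \subseteq cl_d(f^\dagger(A))$, and it suffices to prove that $f^\dagger$ maps every \emph{elementary} $R$-valuation on $X$ into $\Val^R_m Y$, for then $cl_d(f^\dagger(A)) \subseteq cl_d(\Val^R_m Y) = \Val^R_m Y$ (the last equality because $\Val^R_m Y$ is d-closed, being an inductive closure).

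It therefore remains to compute $f^\dagger$ on an elementary $R$-valuation $\nu = \sum_{i=1}^n r_i \times \delta_{x_i}$. Unwinding the definition, for every $k \in \Lform^R Y$ we have
$f^\dagger(\nu)(k) = \nu(\lambda x.\, f(x)(k)) = \sum_{i=1}^n r_i \times f(x_i)(k)$,
which says exactly that $f^\dagger(\nu) = \sum_{i=1}^n r_i \times f(x_i)$, a finite $R$-linear combination of the $R$-valuations $f(x_i) \in \Val^R_m Y$. By hypothesis each $f(x_i)$ lies in $\Val^R_m Y$, and by Lemma~\ref{lemma:VXm:drag} the set $\Val^R_m Y$ is closed under addition and under multiplication by elements of $R$ as computed in $\Val^R Y$; hence the finite sum $\sum_{i=1}^n r_i \times f(x_i)$ is again in $\Val^R_m Y$. (A small point: the definition of elementary $R$-valuation requires $n \geq 1$, so no empty-sum edge case arises, and the first term $r_1 \times f(x_1)$ already lies in $\Val^R_m Y$ by closure under scalar multiplication, with the remaining terms added on one at a time.) This shows $f^\dagger(\Val^R_\fin X) \subseteq \Val^R_m Y$, completing the argument.

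I do not expect any serious obstacle here; the only thing to be careful about is making sure the identity $f^\dagger(\nu) = \sum_i r_i \times f(x_i)$ is read with all operations taken pointwise in $\Val^R Y$ — which is precisely the setting in which Lemma~\ref{lemma:VXm:drag} guarantees closure — rather than in some other structure. Everything else is a routine combination of Fact~\ref{fact:cld}, Lemma~\ref{lemma:VXm:drag}, and the fact that $\Val^R_m Y$ is by definition d-closed in $\Val^R Y$.
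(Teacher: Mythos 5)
Your proof is correct and follows essentially the same route as the paper's: compute $f^\dagger$ on an elementary $R$-valuation to get $\sum_{i=1}^n r_i \times f(x_i)$, use Lemma~\ref{lemma:VXm:drag} to place it in $\Val^R_m Y$, and then transfer to all of $\Val^R_m X$ via Fact~\ref{fact:cld} with $A \eqdef \Val^R_\fin X$, with Scott-continuity inherited from $f^\dagger$ on $\Val^R X$. No differences worth noting.
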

\begin{proof}
  The only challenge is to show that, for every $\nu \in \Val^R_m X$,
  $f^\dagger (\nu)$ is in $\Val^R_m Y$.  Scott-continuity follows
  from the fact that $f^\dagger$ is Scott-continuous from $\Val^R X$
  to $\Val^R Y$.

  For every $\nu \eqdef \sum_{i=1}^n r_i \times \delta_{x_i}$ in
  $\Val^R_\fin X$ ($n \geq 1$), $f^\dagger (\nu)$ is the continuous
  $R$-valuation $\sum_{i=1}^n r_i \times f (x_i)$: for every
  $k \in \Lform^R Y$,
  $f^\dagger (\nu) (k) = \nu (\lambda x \in X . f (x) (k)) =
  \sum_{i=1}^n r_i \times \allowbreak f (x_i) (k) = (\sum_{i=1}^n r_i
  \times f (x_i)) (k)$.
  By Lemma~\ref{lemma:VXm:drag}, and since $f (x_i)$ is in $\Val^R_m
  Y$ for each $i$, $f^\dagger (\nu)$ is in $\Val^R_m Y$ as well.

  Hence $f^\dagger$ maps $\Val^R_\fin X$ to $\Val^R_m Y$.  Using
  Fact~\ref{fact:cld} with $A \eqdef \Val^R_\fin X$,
  $f^\dagger (cl_d (A)) = f^\dagger (\Val^R_m X)$ is included in
  $cl_d (f^\dagger (A)) \subseteq cl_d (\Val^R_m Y) = \Val^R_m Y$.
\end{proof}

We observe that $\eta_X (x) = \delta_x$ is in
$\Val^R_\fin X \subseteq \Val^R_m X$ for every dcpo $X$, and every
$x \in X$, whence the following.
\begin{proposition}
  \label{prop:VX:monadm}
  The triple $(\Val^R_m, \eta, \_^\dagger)$ is a monad on the category
  of dcpos and Scott-continuous maps.
\end{proposition}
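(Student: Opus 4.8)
The plan is to obtain everything by restriction and corestriction from the monad $(\Val^R, \eta, \_^\dagger)$ of Proposition~\ref{prop:VX:monad}, exploiting the fact that $\Val^R_m X$ sits inside $\Val^R X$ as a d-closed, hence sub-dcpo, subset. First I would record that, being the inductive closure of $\Val^R_\fin X$ in $\Val^R X$ — equivalently, its d-closure $cl_d(\Val^R_\fin X)$ — the set $\Val^R_m X$ is closed under directed suprema taken in $\Val^R X$. Therefore $\Val^R_m X$ is itself a dcpo in the inherited ordering, and its directed suprema agree with those computed in $\Val^R X$. Two consequences follow: a map \emph{into} $\Val^R_m X$ is Scott-continuous iff it is Scott-continuous regarded as a map into $\Val^R X$; and if $g \colon \Val^R X \to \Val^R Y$ is Scott-continuous with $g(\Val^R_m X) \subseteq \Val^R_m Y$, then its restriction $\Val^R_m X \to \Val^R_m Y$ is again Scott-continuous.

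Next comes the unit and the extension. For the unit, for every dcpo $X$ and every $x \in X$ we have $\eta_X(x) = \delta_x \in \Val^R_\fin X \subseteq \Val^R_m X$, as already observed; hence $\eta_X$ corestricts to a map $X \to \Val^R_m X$, Scott-continuous by the first consequence above together with the Scott-continuity of $\eta_X \colon X \to \Val^R X$ from Proposition~\ref{prop:VX:monad}. For the extension, given a Scott-continuous $f \colon X \to \Val^R_m Y$, post-composing with the inclusion $\Val^R_m Y \hookrightarrow \Val^R Y$ yields a Scott-continuous map $X \to \Val^R Y$, whose $\Val^R$-extension $f^\dagger \colon \Val^R X \to \Val^R Y$ is defined and Scott-continuous by Proposition~\ref{prop:VX:monad}, via the formula $f^\dagger(\nu)(k) = \nu(\lambda x \in X . f(x)(k))$. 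By Lemma~\ref{lemma:VXm:dagger}, $f^\dagger$ maps $\Val^R_m X$ into $\Val^R_m Y$, so by the second consequence above it restricts and corestricts to a Scott-continuous map $\Val^R_m X \to \Val^R_m Y$, and since the defining formula is unchanged there is no ambiguity in calling this restriction $f^\dagger$.

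Finally, the three monad axioms — (i) $f^\dagger \circ \eta_X = f$, (ii) $\eta_X^\dagger = \identity{\Val^R_m X}$, (iii) $(g^\dagger \circ f)^\dagger = g^\dagger \circ f^\dagger$ — are equalities of functions each side of which is literally the restriction, to the relevant $\Val^R_m$-objects, of the corresponding side of the analogous identity for $\Val^R$; the latter holds by Proposition~\ref{prop:VX:monad}, so the former holds verbatim. I do not expect any genuine obstacle here: the only substantive content, namely that $f^\dagger$ preserves minimality, has already been isolated in Lemma~\ref{lemma:VXm:dagger} (resting in turn on Lemma~\ref{lemma:VXm:drag} and Fact~\ref{fact:cld}), and what remains is the routine bookkeeping that restriction along the sub-dcpo inclusion $\Val^R_m \hookrightarrow \Val^R$ preserves Scott-continuity and all the monad equations.
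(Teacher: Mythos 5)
Your proposal is correct and follows essentially the same route as the paper: the paper obtains the submonad by observing that $\eta_X(x)=\delta_x$ lies in $\Val^R_\fin X \subseteq \Val^R_m X$ and invoking Lemma~\ref{lemma:VXm:dagger} for the extension, with the monad laws inherited by restriction from Proposition~\ref{prop:VX:monad}. Your write-up merely makes explicit the routine sub-dcpo bookkeeping that the paper leaves implicit.
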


A tensorial strength for a monad $(T, \eta, \_^\dagger)$ is a
collection $t$ of morphisms
$t_{X,Y} \colon X \times TY \to T (X \times Y)$, natural in $X$ and
$Y$, satisfying certain coherence conditions (which we omit, see
\cite{moggi91}.)  We then say that $(T, \eta, \_^\dagger, t)$ is a
\emph{strong} monad.  We will satisfy ourselves with the following
result.  By \cite[Proposition~3.4]{moggi91}, in a category with finite
products and enough points, if one can find morphisms $t_{X,Y}$ for
all objects $X$ and $Y$ such that
$t_{X,Y} \circ \langle x, \nu\rangle = T (\langle x \circ !, \identity
{Y} \rangle) \circ \nu$, where $!\colon Y\to 1$ is the unique morphism from $Y$ to the terminal object,
then the collection of those morphisms is the
unique tensorial strength.  The category of dcpos has finite products,
and has enough points, and specializing this to the $\Val^R$ monad, we
obtain the following.
\begin{lemma}
  \label{lemma:VX:t}
  The maps $t_{X,Y} \colon X \times \Val^R Y \to \Val^R (X \times Y)$
  defined by
  $t_{X,Y} (x, \nu) \eqdef \lambda h \in \Lform^R (X \times Y) . \nu
  (\lambda y \in Y . h (x, y))$ define the unique tensorial strength
  for the monad $(\Val^R, \eta, \_^\dagger)$.
\end{lemma}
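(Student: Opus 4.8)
The plan is to apply the criterion from \cite[Proposition~3.4]{moggi91} recalled just above the statement. Since $\Dcpo$ has finite products and enough points (a Scott-continuous map is determined by its values on points $1 \to X$, i.e.\ on elements), it suffices to do two things: (a) check that each $t_{X,Y}$ is a well-defined morphism of $\Dcpo$, and (b) verify the equation $t_{X,Y} \circ \langle x, \nu\rangle = \Val^R(\langle x \circ {!},\identity{Y}\rangle) \circ \nu$ for every point $x \colon 1 \to X$ and every $\nu \colon 1 \to \Val^R Y$ (equivalently, every $\nu \in \Val^R Y$). Moggi's proposition then yields in one stroke that the family $t_{X,Y}$ is a tensorial strength — naturality in $X$ and $Y$ and all the coherence conditions included — and that it is the unique one.

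For (a), fix $x \in X$. For every $h \in \Lform^R (X\times Y)$, the map $\lambda y \in Y.\,h(x,y)$ is the composite of $h$ with the Scott-continuous map $y \mapsto (x,y)$, hence lies in $\Lform^R Y$, so that $\nu(\lambda y \in Y.\,h(x,y))$ is defined. The ``partial application'' $h \mapsto (\lambda y \in Y.\,h(x,y))$ is Scott-continuous (directed suprema in $\Lform^R(X\times Y)$ and in $\Lform^R Y$ are computed pointwise) and preserves pointwise addition and pointwise multiplication by elements of $R$; composing it with the Scott-continuous linear map $\nu$ shows that $t_{X,Y}(x,\nu)$ is a Scott-continuous linear map from $\Lform^R(X\times Y)$ to $R$, i.e.\ an element of $\Val^R(X\times Y)$. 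Finally $t_{X,Y}$ is itself Scott-continuous in the pair $(x,\nu)$: it is monotone in both arguments; it preserves directed suprema in $\nu$ because suprema in $\Val^R Y$ are pointwise; and it preserves directed suprema in $x$ because $h$ is jointly Scott-continuous and $\nu$ is Scott-continuous. Joint Scott-continuity on the product follows in the usual way.

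For (b), write $g \eqdef \langle x \circ {!},\identity{Y}\rangle \colon Y \to X\times Y$, which sends $y$ to $(x,y)$. By Fact~\ref{fact:Vf}, $\Val^R(g)(\nu)(k) = \nu(k \circ g) = \nu(\lambda y \in Y.\,k(x,y))$ for every $k \in \Lform^R(X\times Y)$. This is exactly $t_{X,Y}(x,\nu)(k)$ by the defining formula for $t_{X,Y}$, so the two sides of the required equation agree, completing the verification.

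I do not expect a serious obstacle here: the content is entirely in unwinding the definitions so that the hypotheses of \cite[Proposition~3.4]{moggi91} are satisfied, and the one computation that matters — the equation in (b) — is immediate from Fact~\ref{fact:Vf}. The only point that needs a little care is the joint Scott-continuity of $t_{X,Y}$ in part (a), for which one exploits that evaluation and partial application are Scott-continuous and that directed suprema in all the dcpos involved ($\Lform^R$ of a space, $\Val^R$ of a space) are computed pointwise.
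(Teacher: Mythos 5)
Your proposal is correct and follows essentially the same route as the paper: invoke Moggi's Proposition~3.4 criterion, verify the defining equation $t_{X,Y}\circ\langle x,\nu\rangle = \Val^R(\langle x\circ{!},\identity{Y}\rangle)\circ\nu$ via Fact~\ref{fact:Vf}, and check Scott-continuity of $t_{X,Y}$. You simply spell out the well-definedness and continuity checks in more detail than the paper's terse proof, which is fine.
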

\begin{proof}
  The previous observation shows that we must define $t_{X,Y}$ by
  $t_{X,Y} (x, \nu) \eqdef \Val^R (\lambda y \in Y . (x, y)) (\nu)$.
  By Fact~\ref{fact:Vf}, the latter is equal to
  $\lambda h \in \Lform^R (X \times Y) . \nu (h \circ \lambda y \in Y
  . (x, y)) = \lambda h \in \Lform^R (X \times Y) . \nu (\lambda y \in
  Y . h (x, y))$.

  It is enough to check that $t_{X,Y}$ is Scott-continuous.  This
  follows from the fact that application (of $\nu$ to $\lambda y \in
  Y.  h (x, y))$) is Scott-continuous.
\end{proof}
In integral notation, $t_{X,Y} (x, \nu)$ is the continuous
$R$-valuation $\nu'$ such that $\int h d\nu' = \int h (x, \_)d\nu$ for
every $h \in \Lform^R (X \times Y)$.

For every $\nu \in \Val^R_m Y$, for every $x \in X$, $t_{X,Y} (x,
\nu)$ is equal to $\Val^R (\lambda y \in Y . (x, y)) (\nu) = (\eta
\circ \lambda y \in Y . (x,y))^\dagger (\nu)$.  By
Lemma~\ref{lemma:VXm:dagger}, this is an element of $\Val^R_m (X
\times Y)$.  It follows:
\begin{proposition}
  \label{prop:VmX:t}
  $(\Val^R, \eta, \_^\dagger, t)$ and
  $(\Val^R_m, \eta, \_^\dagger, t)$ are strong monads on $\Dcpo$.
\end{proposition}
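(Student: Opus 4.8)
The statement to prove is Proposition~\ref{prop:VmX:t}: that $(\Val^R, \eta, \_^\dagger, t)$ and $(\Val^R_m, \eta, \_^\dagger, t)$ are strong monads on $\Dcpo$. We already know from Proposition~\ref{prop:VX:monad} and Proposition~\ref{prop:VX:monadm} that the underlying triples are monads, and from Lemma~\ref{lemma:VX:t} that the maps $t_{X,Y}$ are well-defined, Scott-continuous, and form the \emph{unique} tensorial strength for $(\Val^R, \eta, \_^\dagger)$. So for the $\Val^R$ case, essentially nothing remains: the coherence conditions for a strength are exactly the equations characterizing it via $t_{X,Y} \circ \langle x, \nu\rangle = \Val^R(\langle x \circ {!}, \identity{Y}\rangle) \circ \nu$ (through \cite[Proposition~3.4]{moggi91} applied in $\Dcpo$, which has finite products and enough points), and Lemma~\ref{lemma:VX:t} already invoked that criterion. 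I would simply state that the first half is immediate from Lemma~\ref{lemma:VX:t}.

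For the $\Val^R_m$ half, the one thing to check is that the \emph{same} maps $t_{X,Y}$ corestrict to maps $X \times \Val^R_m Y \to \Val^R_m(X \times Y)$; the coherence diagrams then commute in $\Val^R_m$ because they commute in the larger monad $\Val^R$ and $\Val^R_m X$ sits inside $\Val^R X$ as a sub-dcpo on which all the structure maps agree. The corestriction claim is precisely the paragraph immediately preceding the proposition: for $\nu \in \Val^R_m Y$ and $x \in X$, one has $t_{X,Y}(x,\nu) = \Val^R(\lambda y \in Y.(x,y))(\nu) = (\eta_{X\times Y} \circ \lambda y \in Y.(x,y))^\dagger(\nu)$, and since $\eta_{X\times Y} \circ (\lambda y.(x,y))$ is a Scott-continuous map from $Y$ to $\Val^R_m(X\times Y)$ (its values are Dirac masses, hence elementary), Lemma~\ref{lemma:VXm:dagger} gives that its extension sends $\Val^R_m Y$ into $\Val^R_m(X\times Y)$. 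So I would just assemble these already-established facts: $t_{X,Y}$ restricts appropriately, naturality is inherited, and the coherence equations hold because they hold in $\Val^R$.

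The main (and only) obstacle is really a bookkeeping one: making sure that the strength $t$ for $\Val^R_m$ is genuinely the restriction of the strength $t$ for $\Val^R$ — that is, that the formula in Lemma~\ref{lemma:VX:t} makes sense verbatim on $\Val^R_m$ and lands in $\Val^R_m(X\times Y)$ — rather than something that merely happens to exist abstractly. Once that identification is made via the $\dagger$-formula and Lemma~\ref{lemma:VXm:dagger}, the coherence conditions transfer for free because $\Val^R_m$ is a submonad and all the relevant diagrams are diagrams of Scott-continuous maps that already commute one level up. I would write the proof in three short sentences along exactly these lines, with no new computation.

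\begin{proof}
  For $(\Val^R, \eta, \_^\dagger, t)$, this is immediate from
  Lemma~\ref{lemma:VX:t}: the maps $t_{X,Y}$ there are the unique
  tensorial strength, obtained through \cite[Proposition~3.4]{moggi91}
  in $\Dcpo$, which has finite products and enough points, so in
  particular they satisfy the coherence conditions of a strength.

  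For $(\Val^R_m, \eta, \_^\dagger, t)$, we use the same maps
  $t_{X,Y}$.  As noted just above, for every $\nu \in \Val^R_m Y$ and
  every $x \in X$, $t_{X,Y} (x, \nu) = \Val^R (\lambda y \in Y . (x,
  y)) (\nu) = (\eta_{X \times Y} \circ \lambda y \in Y . (x,
  y))^\dagger (\nu)$.  The map $\eta_{X \times Y} \circ \lambda y \in
  Y . (x, y)$ is Scott-continuous from $Y$ to $\Val^R_m (X \times Y)$,
  since each of its values is a $R$-Dirac mass, hence an elementary
  $R$-valuation; so by Lemma~\ref{lemma:VXm:dagger} its extension maps
  $\Val^R_m Y$ into $\Val^R_m (X \times Y)$.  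Thus $t_{X,Y}$
  corestricts to a Scott-continuous map $X \times \Val^R_m Y \to
  \Val^R_m (X \times Y)$.  Naturality in $X$ and $Y$, and the
  coherence conditions for a tensorial strength, hold for this
  corestriction because they already hold for $t_{X,Y}$ on the larger
  spaces $\Val^R (X \times Y)$, and $\Val^R_m (X \times Y)$ is a
  sub-dcpo of $\Val^R (X \times Y)$ on which the monad structure maps
  agree with those of $\Val^R$.  Hence $(\Val^R_m, \eta, \_^\dagger,
  t)$ is a strong monad on $\Dcpo$.
\end{proof}
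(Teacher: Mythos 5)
Your proposal is correct and follows essentially the same route as the paper: the paper's argument is precisely the paragraph preceding the proposition, which rewrites $t_{X,Y}(x,\nu)$ as $(\eta \circ \lambda y \in Y.(x,y))^\dagger(\nu)$ and invokes Lemma~\ref{lemma:VXm:dagger} to corestrict the strength of Lemma~\ref{lemma:VX:t} to $\Val^R_m$. Your additional remarks on the transfer of naturality and the coherence diagrams merely make explicit what the paper leaves implicit.
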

We now show that $\Val^R_m$ is a \emph{commutative} monad.  The
corresponding result is unknown for $\Val^R$, even when $R = \creal$.

Given a tensorial strength $t$, there is a dual tensorial strength
$t'$, where $t'_{X,Y} \colon TX \times Y \to T (X \times Y)$.  Here
$t'_{X,Y} (\mu, y) = \lambda h \in \Lform^R (X \times Y) . \mu
(\lambda x \in X . h (x, y))$.  We can then define two morphisms from
$TX \times TY$ to $T (X \times Y)$, namely
${t'}^\dagger_{X,Y} \circ t_{TX,Y}$ and
$t^\dagger_{X,Y} \circ t'_{X,TY}$.  The monad $T$ is
\emph{commutative} when they coincide.
\begin{lemma}
  \label{lemma:f=g:cld}
  Two morphisms $f, g \colon X \to Y$ in $\Dcpo$ that coincide on
  $A \subseteq X$ also coincide on $cl_d (A)$.
\end{lemma}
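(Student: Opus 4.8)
The plan is to exhibit the equalizer of $f$ and $g$ as a d-closed set containing $A$, and then invoke the characterization of the inductive closure as the d-closure. Concretely, set
\[
  E \eqdef \{x \in X \mid f (x) = g (x)\}.
\]
By hypothesis $A \subseteq E$, so it suffices to prove that $E$ is a d-closed subset of $X$; for then $cl_d (A) \subseteq cl_d (E) = E$, which is precisely the assertion that $f$ and $g$ coincide on $cl_d (A)$.

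To see that $E$ is d-closed, let $D$ be a directed family of elements of $E$, and let $z \eqdef \sup D$ be its supremum taken in $X$. Since $f$ and $g$ are Scott-continuous, they preserve this directed supremum, so $f (z) = \sup_{x \in D} f (x)$ and $g (z) = \sup_{x \in D} g (x)$. For each $x \in D$ we have $f (x) = g (x)$ because $x \in E$, so the two directed families ${(f (x))}_{x \in D}$ and ${(g (x))}_{x \in D}$ are literally the same, whence $f (z) = g (z)$, i.e., $z \in E$. Thus $E$ contains the supremum of every one of its directed subfamilies, which is the definition of a d-closed set.

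An equivalent and perhaps more conceptual way to phrase the same argument: the pairing $\langle f, g\rangle \colon X \to Y \times Y$ is Scott-continuous, hence continuous for the underlying d-topologies, and $E = {\langle f, g\rangle}^{-1} (\Delta_Y)$, where $\Delta_Y = \{(y, y) \mid y \in Y\}$ is the diagonal; and $\Delta_Y$ is d-closed in $Y \times Y$ since the supremum of a directed family of diagonal pairs $(y_i, y_i)$ is $(\sup_i y_i, \sup_i y_i)$, again on the diagonal. Either way, there is essentially no obstacle here: the only thing being used is that Scott-continuous maps preserve directed suprema, and that, as recalled just before the lemma, the inductive closure of $A$ coincides with its closure $cl_d (A)$ in the d-topology.
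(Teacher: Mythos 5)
Your proof is correct and is essentially the paper's own argument: both define the set where $f$ and $g$ agree, show it is d-closed because Scott-continuous maps preserve directed suprema, and conclude that it contains $cl_d (A)$. The extra diagonal reformulation is a fine alternative phrasing but adds nothing beyond the same idea.
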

\begin{proof}
  Let $B \eqdef \{x \in X \mid f (x)=g (x)\}$.  Since $f$ and $g$
  preserve directed suprema, $B$ is d-closed.  By assumption, $A$ is
  included in $B$, so $B$ also contains $cl_d (A)$.
\end{proof}

\begin{proposition}
  \label{prop:VmX:comm}
  Let $X$, $Y$ be two dcpos.  The maps
  $t^\dagger_{X,Y} \circ t'_{X,TY}$ and
  ${t'}^\dagger_{X,Y} \circ t_{TX,Y}$ coincide on those pairs
  $(\mu, \nu) \in \Val^R X \times \Val^R Y$ such that
  $\mu \in \Val^R_m X$ or $\nu \in \Val^R_m Y$.
\end{proposition}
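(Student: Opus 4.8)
The plan is to unfold both composites explicitly, observe that each is Scott-continuous in each of its two arguments separately, and then match them on elementary $R$-valuations, propagating the equality to all minimal $R$-valuations via the d-closure Lemma~\ref{lemma:f=g:cld}.

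Writing $T=\Val^R$, the first step is a bookkeeping computation. Using the formula for $t$ from Lemma~\ref{lemma:VX:t}, the formula for the dual strength $t'$ given just above, and the definition of $\_^\dagger$ from Proposition~\ref{prop:VX:monad}, one obtains for $(\mu,\nu)\in TX\times TY$ and $h\in\Lform^R(X\times Y)$ that $(t^\dagger_{X,Y}\circ t'_{X,TY})(\mu,\nu)(h)=\mu(\lambda x\in X.\,\nu(\lambda y\in Y.\,h(x,y)))$ and $({t'}^\dagger_{X,Y}\circ t_{TX,Y})(\mu,\nu)(h)=\nu(\lambda y\in Y.\,\mu(\lambda x\in X.\,h(x,y)))$. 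In integral notation these read $\int_{x}\int_{y}h(x,y)\,d\nu\,d\mu$ and $\int_{y}\int_{x}h(x,y)\,d\mu\,d\nu$, so the claim is exactly that the order of integration may be exchanged as soon as one of $\mu$, $\nu$ is minimal. Both composites are morphisms of $\Dcpo$, hence Scott-continuous; in particular, for fixed $\mu$ the maps $\nu\mapsto(t^\dagger_{X,Y}\circ t'_{X,TY})(\mu,\nu)$ and $\nu\mapsto({t'}^\dagger_{X,Y}\circ t_{TX,Y})(\mu,\nu)$ are both Scott-continuous from $\Val^R Y$ to $\Val^R(X\times Y)$, and symmetrically with the roles of $\mu$ and $\nu$ interchanged.

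Next I would settle the case $\nu\in\Val^R_m Y$. Fix $\mu\in\Val^R X$ and call $F_\mu$, $G_\mu$ the two Scott-continuous maps just described, viewed as functions of $\nu$. For an elementary $R$-valuation $\nu=\sum_{j=1}^m s_j\times\delta_{y_j}$ and any $h\in\Lform^R(X\times Y)$ we get $G_\mu(\nu)(h)=\sum_{j} s_j\times\mu(\lambda x.\,h(x,y_j))$ straight from the definition of $\delta_{y_j}$, while $F_\mu(\nu)(h)=\mu(\lambda x.\,\sum_j s_j\times h(x,y_j))=\sum_j s_j\times\mu(\lambda x.\,h(x,y_j))$, the last equality by linearity (additivity and homogeneity) of $\mu$. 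Thus $F_\mu$ and $G_\mu$ agree on $\Val^R_\fin Y$, hence on $cl_d(\Val^R_\fin Y)=\Val^R_m Y$ by Lemma~\ref{lemma:f=g:cld}. The case $\mu\in\Val^R_m X$ is entirely symmetric: fix $\nu\in\Val^R Y$, view both composites as Scott-continuous functions of $\mu$, and note that for $\mu=\sum_{i=1}^n r_i\times\delta_{x_i}$ both evaluate to $\sum_i r_i\times\nu(\lambda y.\,h(x_i,y))$, this time using linearity of $\nu$; conclude again by Lemma~\ref{lemma:f=g:cld}. Combining the two cases yields the proposition.

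I expect the only delicate point to be the first step: getting the explicit formulas for the two composites right, since each threads through the extension operator on a product space ($X\times TY$ in one case, $TX\times Y$ in the other), and then checking that the resulting maps are Scott-continuous in the single variable one wishes to vary. Everything after that is a direct application of linearity together with the d-closure lemma, with no further obstacle.
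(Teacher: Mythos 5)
Your proposal is correct and follows essentially the same route as the paper's proof: unfold both composites to the two iterated-integral formulas, check equality on elementary $R$-valuations using linearity of the other argument, and extend to minimal $R$-valuations via Scott-continuity and Lemma~\ref{lemma:f=g:cld}. The only cosmetic difference is that you write out both symmetric cases explicitly, whereas the paper treats the case $\mu \in \Val^R_m X$ and invokes symmetry for the other.
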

\begin{proof}
  We prove the claim when $\mu \in \Val^R_m X$.  The case where $\nu
  \in \Val^R_m Y$ is symmetric.

  In the sequel, $h$ ranges over $\Lform^R (X \times \Val^R Y)$, $k$
  over $\Lform^R (X \times Y)$, $x$ over $X$, $y$ over $Y$, and $\nu$
  over $\Val^R Y$.  For all $\mu \in \Val^R X$ and $\nu \in \Val^R Y$,
  we verify that:
  \begin{align*}
    (t^\dagger_{X,Y} \circ t'_{X,TY}) (\mu, \nu)
    & = \lambda k .
      \mu (\lambda x . \nu (\lambda y . k (x, y)))
      \\
    ({t'}^\dagger_{X,Y} \circ t_{TX,Y}) (\mu, \nu)
    & = \lambda k .
      \nu (\lambda y . \mu (\lambda x . k (x, y))).
  \end{align*}
  Those two quantities are equal when $\mu$ is an elementary
  $R$-valuation $\sum_{i=1}^m r_i \times \delta_{x_i}$ ($m \geq 1$),
  since the first one is equal to
  $\lambda k.  \sum_{i=1}^m r_i \times \nu (\lambda y . k (x_i, y))$,
  the second one is equal to
  $\lambda k .  \nu (\lambda y . \sum_{i=1}^m r_i \times k (x_i, y))$,
  and since $\nu$ is linear.


  For fixed $\nu \in \Val^R Y$, the maps
  $f \colon \mu \in \Val^R X \mapsto (t^\dagger_{X,Y} \circ t'_{X,TY})
  (\mu, \nu)$ and
  $g \colon \mu \in \Val^R X \mapsto ({t'}^\dagger_{X,Y} \circ
  t_{TX,Y}) (\mu, \nu)$ therefore coincide on $\Val^R_\fin X$.  They
  are both Scott-continuous, since $t^\dagger_{X,Y} \circ t'_{X,TY}$
  and ${t'}^\dagger_{X,Y} \circ t_{TX,Y}$ are.  By
  Lemma~\ref{lemma:f=g:cld}, they must coincide on the d-closure of
  $\Val^R_\fin X$, which is $\Val^R_m X$ by definition.
\end{proof}

\begin{corollary}
  \label{corl:VmX:comm}
  $(\Val^R_m, \eta, \_^\dagger, t)$ is a commutative monad.
\end{corollary}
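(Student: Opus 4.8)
The plan is to derive the corollary directly from Proposition~\ref{prop:VmX:comm} together with the fact, already established in Proposition~\ref{prop:VmX:t}, that $(\Val^R_m, \eta, \_^\dagger, t)$ is a strong monad. By definition, commutativity of this monad means that the two composites $t^\dagger_{X,Y} \circ t'_{X,TY}$ and ${t'}^\dagger_{X,Y} \circ t_{TX,Y}$, viewed as morphisms from $\Val^R_m X \times \Val^R_m Y$ to $\Val^R_m (X \times Y)$, coincide. So the only thing to check is that these two morphisms, which a priori live at the level of the ambient monad $\Val^R$, restrict correctly to $\Val^R_m$ and agree there.

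First I would note that, for the strong monad $\Val^R_m$, the strength $t$ and its dual $t'$ are the restrictions of the corresponding maps for $\Val^R$; this is implicit in Proposition~\ref{prop:VmX:t}, and one checks that $t_{X,Y}$ maps $X \times \Val^R_m Y$ into $\Val^R_m(X\times Y)$ using Lemma~\ref{lemma:VXm:dagger}, exactly as was done in the paragraph preceding Proposition~\ref{prop:VmX:t}. Likewise $t'_{X,Y}$ maps $\Val^R_m X \times Y$ into $\Val^R_m(X\times Y)$ by the symmetric argument, and the daggers $t^\dagger_{X,Y}$, ${t'}^\dagger_{X,Y}$ of the $\Val^R_m$ monad are the restrictions of the $\Val^R$ ones by Lemma~\ref{lemma:VXm:dagger}. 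Hence the two composites $t^\dagger_{X,Y} \circ t'_{X,TY}$ and ${t'}^\dagger_{X,Y} \circ t_{TX,Y}$ that appear in the definition of commutativity for $\Val^R_m$ are simply the restrictions to $\Val^R_m X \times \Val^R_m Y$ of the two composites considered in Proposition~\ref{prop:VmX:comm}.

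Now Proposition~\ref{prop:VmX:comm} states precisely that these two composites agree on every pair $(\mu,\nu) \in \Val^R X \times \Val^R Y$ for which $\mu \in \Val^R_m X$ \emph{or} $\nu \in \Val^R_m Y$. In particular they agree whenever $\mu \in \Val^R_m X$ \emph{and} $\nu \in \Val^R_m Y$, which is exactly the domain $\Val^R_m X \times \Val^R_m Y$ over which commutativity of $\Val^R_m$ is to be checked. Thus the two composites coincide as morphisms $\Val^R_m X \times \Val^R_m Y \to \Val^R_m(X\times Y)$, and $(\Val^R_m, \eta, \_^\dagger, t)$ is commutative.

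There is essentially no obstacle here: all the work has been front-loaded into Proposition~\ref{prop:VmX:comm}. The one point that deserves a word of care is the bookkeeping verifying that the strength, dual strength, and extension operation of the submonad $\Val^R_m$ genuinely are the restrictions of those of $\Val^R$ — so that ``commutativity of $\Val^R_m$'' unwinds to the statement proved in Proposition~\ref{prop:VmX:comm} — but this is routine given Lemmas~\ref{lemma:VXm:drag} and~\ref{lemma:VXm:dagger}. Nothing else is needed, so the corollary follows immediately.
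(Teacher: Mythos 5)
Your proposal is correct and matches the paper's (implicit) argument: the corollary is an immediate consequence of Proposition~\ref{prop:VmX:comm}, since the two composites agree in particular when both arguments are minimal, and the strength, dual strength and extension of $\Val^R_m$ are the restrictions of those of $\Val^R$ by Lemma~\ref{lemma:VXm:dagger} and the discussion preceding Proposition~\ref{prop:VmX:t}. Nothing further is needed.
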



The equality of Proposition~\ref{prop:VmX:comm} is that, for every
$\mu \in \Val^R_m X$ and for every $\nu \in \Val^R_m Y$, for every
$k \in \Lform^R (X \times Y)$,
$\mu (\lambda x \in X . \nu (\lambda y \in Y . k (x, y))) = \nu
(\lambda y \in Y . \mu (\lambda x \in X . k (x, y)))$.  In integral
notation,
\[
  \int_{x \in X} \left(\int_{y \in Y} k (x, y) d\nu \right) d\mu
  = \int_{y \in Y} \left(\int_{x \in X} k (x, y) d\mu\right)d\nu,
\]
which we recognize as the integral permutation property, obtained
in the classical measure-theoretic case as a consequence of Fubini's
theorem.

Fubini's theorem is more general, and states the existence of a
product measure.  A similar fact follows from the above results, as
noticed by Kock \cite{Kock:monad:comm}.  We write $\otimes$ for the
morphism ${t'}^\dagger_{X,Y} \circ t_{TX,Y}$ and
$t^\dagger_{X,Y} \circ t'_{X,TY}$ from $\Val^R_m X \times \Val^R_m Y$
to $\Val^R_m (X \times Y)$, as with any commutative monad
\cite[Section~5]{Kock:monad:comm}.  Then, for all $\mu \in \Val^R_m X$
and $\nu \in \Val^R_m Y$, $\otimes (\mu, \nu)$, which we prefer to
write as $\mu \otimes \nu$, is in $\Val^R_m (X \times Y)$, and by
definition
$(\mu \otimes \nu) (k) = \mu (\lambda x \in X . \nu (\lambda y \in Y
. k (x, y))) = \nu (\lambda y \in Y . \mu (\lambda x \in X . k (x,
y)))$.  In integral notation, we obtain the following form of Fubini's
theorem:
\[
  \int_{(x, y) \in X \times Y} k (x, y) d(\mu \otimes \nu)
  = \int_{x \in X} \left(\int_{y \in Y} k (x, y) d\nu \right) d\mu
  = \int_{y \in Y} \left(\int_{x \in X} k (x, y) d\mu\right)d\nu,
\]
for all $\mu \in \Val^R_m X$, $\nu \in \Val^R_m Y$, and
$k \in \Lform^R (X \times Y)$.  As an additional benefit, we obtain (for
free!) that the map
$\otimes \colon (\mu, \nu) \mapsto \mu \otimes \nu$ is
Scott-continuous.

\begin{remark}
  \label{rem:fubini}
  A similar Fubini-like theorem was already obtained by Jones
  \cite{jones90} for arbitrary (subprobability) continuous valuations,
  but in the setting of continuous dcpos only.  Whether the
  Fubini-like formula above holds for every pair of continuous
  valuations $\mu$ and $\nu$ on arbitrary dcpos is unknown.  We note
  that the problem would be easily solved if all continuous valuations
  were minimal, but that is not the case, as is shown in the paper~\cite{goubault-jia-2021}.
\end{remark}

\section{Continuous $R$-valuations and measures I: A brief viewpoint}
\label{sec:cont-r-valu-1}

We look at the special cases of continuous $R$-valuations when $R$ is
$\creal$ or $\Icreal$, and we investigate their relations to measures.

When $R = \creal$, this is simple: as noticed in Remark~\ref{rem:val},
we can equate continuous $R$-valuations with continuous valuations.
Next, continuous valuations and measures are pretty much the same
thing on $\omega$-continuous dcpos, namely on continuous dcpos with a
countable basis.  This holds more generally on de Brecht's
quasi-Polish spaces \cite{deBrecht:qPolish}, a class of spaces that
contains not only the $\omega$-continuous dcpos from domain theory but
also the Polish spaces from topological measure theory.  One can see
this as follows.  In one direction, every measure $\mu$ on a
hereditarily Lindel\"of space $X$ is $\tau$-smooth
\cite[Theorem~3.1]{Adamski:measures}, meaning that its restriction to
the lattice of open subsets of $X$ is a continuous valuation.  A
hereditarily Lindel\"of space is a space whose subspaces are all
Lindel\"of, or equivalently a space in which every family of open sets
contains a countable subfamily with the same union.  Every
second-countable space is hereditarily Lindel\"of, and that includes
all quasi-Polish spaces.  In the other direction, every continuous
valuation on an LCS-complete space extends to a Borel measure
\cite[Theorem~1.1]{DGJL-isdt19}.  An \emph{LCS-complete} space is a
space that is homeomorphic to a $G_\delta$ subset of a locally compact
sober space.  Every quasi-Polish space is LCS-complete; in fact, the
quasi-Polish spaces are exactly the second-countable LCS-complete
spaces \cite[Theorem~9.5]{DGJL-isdt19}.

\begin{remark}
  \label{rem:ext:unique}
  A continuous valuation $\mu$ on an LCS-complete space $X$ may extend
  to more than one Borel measure.  However, the extension is unique
  if $\mu$ is \emph{$\sigma$-finite}, namely if there is a monotone
  sequence
  $U_0 \subseteq U_1 \subseteq \cdots \subseteq U_n \subseteq \cdots$
  of open subsets of $X$ whose union is the whole of $X$, and such
  that $\mu (U_n) < +\infty$ for every $n \in \nat$.  Indeed, any
  extension of $\mu$ will be $\sigma$-finite in the usual sense.  We
  conclude since any two $\sigma$-finite measures that agree on all
  open sets (which form a $\pi$-system) must agree on the Borel
  $\sigma$-algebra.
\end{remark}

We now look in detail at the more complex case $R = \Icreal$.  We use
the following notation.  Given any element $\mathbf x$ of $\Icreal$ or
of $\IRbb$, we write $x^-$ and $x^+$ for its endpoints, viz.,
$\mathbf x = [x^-, x^+]$.  Every map $h \colon X \to \IRbb$ defines
two maps $h^-, h^+ \colon X \to \real \cup \{-\infty, +\infty\}$ by
$h^- (x) \eqdef h (x)^-$ and $h^+ (x) \eqdef h (x)^+$.  Given two maps
$f, g \colon X \to \real \cup \{-\infty, +\infty\}$ such that
$f \leq g$, we write $[f, g]$ for the function that maps $x$ to
$[f (x), g (x)]$.  Note that, given any map $f$ in $\Lform X$, the map
$[f, +\infty.\mathbf 1] \colon x \mapsto [f (x), +\infty]$ is in
$\Lform^{\Icreal} X$.  (We will write $r.\mathbf 1$ for the constant
function with value $r$, in order to distinguish it from the scalar
value $r$.)  Given any $\Icreal$-continuous valuation $F$ on a space
$X$, we also define $F^- (h)$ as $F (h)^-$ and $F^+ (h)$ as $F (h)^+$,
for every $h \in \Lform^{\Icreal} X$.
\begin{lemma}[The view from the left I]
  \label{lemma:Rcont->cont}
  Let $X$ be any topological space.  For every continuous
  $\Icreal$-valuation $F$ on $X$, for every
  $h \in \Lform^{\Icreal} X$, $F^- (h)$ only depends on $h^-$, not on
  $h^+$.  Moreover, there is a unique continuous valuation $\nu_F$ on
  $X$ such that, for every $h \in \Lform^{\Icreal} X$,
  \begin{align*}
    F^- (h) = \int_X h^- d\nu_F.
  \end{align*}
\end{lemma}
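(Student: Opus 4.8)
I would prove the two assertions in order. First, that $F^-(h)$ depends only on $h^-$. Take $h, h' \in \Lform^{\Icreal} X$ with $h^- = h'^-$; I want $F^-(h) = F^-(h')$. The key is the interplay between the ordering $\leq$ on $\Icreal$ (which is $\supseteq$) and Scott-continuity of $F$. Observe that $h \leq [h^-, +\infty.\mathbf 1]$ pointwise, since $[h^-(x),h^+(x)] \supseteq [h^-(x),+\infty]$; hence by monotonicity $F(h) \geq F([h^-,+\infty.\mathbf 1])$, i.e.\ $F(h) \subseteq F([h^-,+\infty.\mathbf 1])$, which forces $F^-(h) \geq F^-([h^-,+\infty.\mathbf 1])$. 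For the reverse inequality I would write $[h^-,+\infty.\mathbf 1]$ as a directed supremum of functions built from $h$: for instance, approximate by functions whose right endpoint is pushed up to $h^+(x)+n$ — actually more carefully, note $[h^-,+\infty.\mathbf 1] = \sup_n$ of pointwise suitable truncations — and conclude $F^-([h^-,+\infty.\mathbf 1]) = \sup_n F^-(\cdot) \geq$ something controlled by $F^-(h)$. A cleaner route: show directly $F^-(h) = F^-([h^-, +\infty.\mathbf 1])$ for \emph{every} $h$, by squeezing $[h^-,+\infty.\mathbf 1]$ between $h$ and a directed sup of scalar multiples/sums involving $h$ and using additivity and homogeneity of $F$. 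Since $[h^-,+\infty.\mathbf 1]$ depends only on $h^-$, this yields the claim.

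**Defining $\nu_F$ and its properties.** Having shown $F^-(h)$ factors through $h^-$, I define, for $f \in \Lform X$, $G(f) \eqdef F^-([f,+\infty.\mathbf 1])$, using that $[f,+\infty.\mathbf 1] \in \Lform^{\Icreal} X$ as noted in the excerpt. I must check $G \colon \Lform X \to \creal$ is Scott-continuous and linear, so that by the correspondence recalled in the Preliminaries (linear Scott-continuous functionals on $\Lform X$ are exactly integration against a unique continuous valuation) there is a unique continuous valuation $\nu_F$ with $G(f) = \int_X f\,d\nu_F$, and then $F^-(h) = G(h^-) = \int_X h^-\,d\nu_F$ for all $h$, with uniqueness of $\nu_F$ inherited from uniqueness in that correspondence. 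Scott-continuity of $G$: a directed sup $f = \sup_i f_i$ in $\Lform X$ gives $[f,+\infty.\mathbf 1] = \sup_i [f_i,+\infty.\mathbf 1]$ directed in $\Lform^{\Icreal} X$ (checking the supremum formula $[\sup a_i,\inf b_i]$ with all $b_i = +\infty$), so $F$ Scott-continuous and $(\cdot)^-$ Scott-continuous from $\Icreal$ to $\creal$ give $G(f) = \sup_i G(f_i)$. Additivity: $[f+f',+\infty.\mathbf 1]$ versus $[f,+\infty.\mathbf 1] + [f',+\infty.\mathbf 1] = [f+f',+\infty.\mathbf 1]$ — these are literally equal since $+\infty + +\infty = +\infty$, so $G(f+f') = F^-([f+f',+\infty.\mathbf 1]) = F^-([f,+\infty.\mathbf 1]+[f',+\infty.\mathbf 1]) = F^-([f,+\infty.\mathbf 1]) + F^-([f',+\infty.\mathbf 1]) = G(f)+G(f')$ by additivity of $F$ and the fact that $(a+b)^- = a^- + b^-$ in $\Icreal$. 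Homogeneity: for $\alpha \in \realp$, one has $\alpha.[f,+\infty.\mathbf 1] = [\alpha,\alpha] \times [f,+\infty.\mathbf 1]$; its left endpoint is $\alpha \cdot_\ell f$, which equals $\alpha f$ (pointwise) \emph{except} possibly where $\alpha = 0$ and $f(x) = +\infty$, where $\alpha \cdot_\ell (+\infty) = 0$ — matching $(\alpha.f)(x) = 0.(+\infty) = 0$ under the $\creal$ convention — so $G(\alpha.f) = F^-([\alpha,\alpha]\times[f,+\infty.\mathbf 1]) = ([\alpha,\alpha]\times F([f,+\infty.\mathbf 1]))^- = \alpha \cdot_\ell F^-([f,+\infty.\mathbf 1]) = \alpha . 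G(f)$ by homogeneity of $F$.

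**Main obstacle.** The first part — that $F^-(h)$ does not see $h^+$ — is the crux, and the delicate point is that while $h \leq [h^-,+\infty.\mathbf 1]$ is immediate, showing $F^-$ takes the same value on both requires exhibiting $[h^-,+\infty.\mathbf 1]$ as a directed supremum of functions that $F$ can be shown to send to intervals with left endpoint no larger than $F^-(h)$. The natural candidate is to fatten $h$ on the right: set $h_n(x) \eqdef [h^-(x), h^+(x) + n]$ when $h^+(x) < +\infty$ (and $[h^-(x),+\infty]$ otherwise), which should be Scott-continuous, monotone in $n$, with pointwise supremum $[h^-,+\infty.\mathbf 1]$; then $h_n \geq h$ forces $F^-(h_n) \leq F^-(h)$ — wait, direction: $h_n \leq h$ actually ($[h^-,h^++n] \supseteq [h^-,h^+]$? no, $[h^-,h^++n] \subseteq$ is false)... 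I need to be careful about which way inclusion goes, since larger interval $=$ smaller in $\leq$. The functions with $h^+$ pushed \emph{up} are larger intervals hence \emph{smaller} in $\leq$, so $h_n \leq h$, $F(h_n) \leq F(h)$, $F^-(h_n) \leq F^-(h)$; and $h_n \leq [h^-,+\infty.\mathbf 1]$ too with the $h_n$ forming a descending... no, they form an \emph{ascending} chain in $\leq$ (as $n$ grows the interval shrinks back toward $[h^-,h^+]$)? The interval $[h^-,h^++n]$ \emph{grows} with $n$, so shrinks in $\leq$: the chain is \emph{descending}, sup is at $n=0$, not helpful. The right move is the opposite: approximate $[h^-,+\infty.\mathbf 1]$ \emph{from below in $\leq$} — i.e.\ by larger and larger intervals — which is trivial ($[h^-,+\infty.\mathbf 1]$ is already the largest such), so instead approximate $h$'s left behaviour. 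I expect the clean argument runs: $[h^-, +\infty.\mathbf 1] = \sup_{n} ([h^-,+\infty.\mathbf 1] \wedge \text{something}) $; concretely, $[h^-,+\infty.\mathbf 1]$ is the directed sup of the functions $k_n \eqdef [\min(h^-, n.\mathbf 1) \text{ patched}, \ldots]$ — getting this exactly right, so that each $k_n$ is genuinely in $\Lform^{\Icreal}X$ and $F^-(k_n)$ is pinned to $F^-(h)$ via linearity, is where the real work lies; everything after it is the routine verification sketched above.
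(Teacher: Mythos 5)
There is a genuine gap: the crux of the lemma, namely the equality $F^- (h) = F^- ([h^-, +\infty.\mathbf 1])$, is never established in your proposal, and you say so yourself in your ``main obstacle'' paragraph. What you actually have is only the easy half, and even that is stated with the order reversed: since the order on $\Icreal$ is reverse inclusion and $[h^- (x), +\infty] \supseteq [h^- (x), h^+ (x)]$, one has $[h^-, +\infty.\mathbf 1] \leq h$ (not $h \leq [h^-, +\infty.\mathbf 1]$), so monotonicity of $F$ only gives $F^- ([h^-, +\infty.\mathbf 1]) \leq F^- (h)$. The inequality you need, $F^- (h) \leq F^- ([h^-, +\infty.\mathbf 1])$, cannot be reached by the approximation schemes you sketch: fattening the right endpoint of $h$ produces functions that are \emph{below} $h$ in $\leq$ and form a chain going the wrong way, as you noticed, and there is no obvious way to exhibit $[h^-, +\infty.\mathbf 1]$ as a directed supremum of things whose images under $F$ are controlled by $F (h)$. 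Scott-continuity is simply not the right tool here.

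The missing idea is purely algebraic, and uses linearity of $F$ with the ``exotic'' scalar $[0, +\infty]$, the bottom element of $\Icreal$, which is absorbing for $\times$ (this is exactly where the d-rag, non-rig structure pays off). Homogeneity applied to the constant map $[0.\mathbf 1, +\infty.\mathbf 1]$ gives
\begin{align*}
  F ([0.\mathbf 1, +\infty.\mathbf 1])
  = F ([0, +\infty] \times [0.\mathbf 1, +\infty.\mathbf 1])
  = [0, +\infty] \times F ([0.\mathbf 1, +\infty.\mathbf 1])
  = [0, +\infty].
\end{align*}
Then, since $[a, b] + [0, +\infty] = [a, +\infty]$ in $\Icreal$, one has $h + [0.\mathbf 1, +\infty.\mathbf 1] = [h^-, +\infty.\mathbf 1]$ pointwise, so additivity yields $F ([h^-, +\infty.\mathbf 1]) = F (h) + [0, +\infty] = [F^- (h), +\infty]$, which is the desired equality (and in fact pins down the whole interval, not just its left endpoint). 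Your second half --- defining $G (f) \eqdef F^- ([f, +\infty.\mathbf 1])$, checking it is Scott-continuous and linear, and invoking the bijection between Scott-continuous linear functionals on $\Lform X$ and continuous valuations --- is correct and is exactly how the paper concludes; but without the algebraic step above, the identity $F^- (h) = G (h^-)$, and hence the lemma, does not follow.
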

\begin{proof}
  For the first part, it suffices to show that
  $F^- (h) = F^- ([h^-, +\infty.\mathbf 1])$.  We note that the bottom
  element $[0, +\infty]$ of $\Icreal$ is multiplicatively absorbing:
  for every $\mathbf x \in \Icreal$,
  $[0, +\infty] \times \mathbf x = [0, +\infty]$.  It follows that
  \begin{align*}
    F ([0.\mathbf 1, +\infty.\mathbf 1])
    & = F ([0, +\infty] \times [0.\mathbf 1, +\infty.\mathbf 1]) \\
    & = [0, +\infty] \times F ([0.\mathbf 1, +\infty.\mathbf 1])
      = [0, +\infty].
  \end{align*}
  Next, $[0, +\infty]$ satisfies the following partial absorption law
  for addition: $\mathbf x \in \Icreal$,
  $[0, +\infty] + \mathbf x = [x^-, +\infty]$.  Therefore,
  \begin{align*}
    F ([h^-, +\infty.\mathbf 1])
    & = F (h + [0.\mathbf 1, +\infty.\mathbf 1]) \\
    & = F (h) + F ([0.\mathbf 1, +\infty.\mathbf 1]) \\
    & = F (h) + [0, +\infty] & \text{by our previous result} \\
    & = [F^- (h), +\infty].
  \end{align*}
  It follows that $F^- (h) = F^- ([h^-, +\infty.\mathbf 1])$, and the
  right-hand side does not depend on $h^+$.

  In order to show the second part of the lemma, it suffices to
  observe that the map $f \mapsto F^- ([f, +\infty.\mathbf 1])$ is
  linear and Scott-continuous, and is therefore the integral
  functional of a unique continuous valuation $\nu_F$.
\end{proof}
It follows that, for every $f \in \Lform X$,
$\int_X f d\nu_F = F^- ([f, +\infty.\mathbf 1])$.  In particular, for
every open subset $U$ of $X$,
$\nu_F (U) = F^- ([\chi_U, +\infty.\mathbf 1])$.

There are many ways in which we can reconstruct a continuous
$\Icreal$-valuation from a continuous valuation, and here is the
simplest of all.
\begin{lemma}[The view from the left II]
  \label{lemma:cont->Rcont}
  Let $X$ be any topological space.  For every continuous valuation
  $\nu$ on $X$, there is a smallest continuous $\Icreal$-valuation $F$
  such that $\nu_F=\nu$.  For every $h \in \Lform^{\Icreal} X$,
  \begin{align*}
    F (h) & \eqdef \left[\int_X h^- d\nu, +\infty\right].
  \end{align*}
\end{lemma}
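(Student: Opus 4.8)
The plan is to verify directly that the formula $F(h) \eqdef [\int_X h^- d\nu, +\infty]$ defines a continuous $\Icreal$-valuation, that it satisfies $\nu_F = \nu$, and that any other continuous $\Icreal$-valuation $G$ with $\nu_G = \nu$ dominates $F$ pointwise. First I would check that $F$ is well-defined and lands in $\Icreal$: since $h \in \Lform^{\Icreal} X$, the map $h^-$ is lower semicontinuous with values in $\creal$ (one uses that $\mathbf x \mapsto x^-$ is Scott-continuous from $\Icreal$ to $\creal$), so $\int_X h^- d\nu \in \creal$ is defined and $[\int_X h^- d\nu, +\infty]$ is a legitimate element of $\Icreal$. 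Scott-continuity of $F$ follows because suprema in $\Lform^{\Icreal} X$ are computed pointwise, the operation $h \mapsto h^-$ is Scott-continuous, the integral against $\nu$ is Scott-continuous in its integrand, and the embedding $t \mapsto [t, +\infty]$ of $\creal$ into $\Icreal$ is Scott-continuous.

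Next I would verify linearity. For homogeneity, given $\mathbf a = [a^-, a^+] \in \Icreal$, one computes $(\mathbf a \times h)^- = a^- \cdot_\ell h^-$ pointwise; since $a^-$ is a nonnegative real (possibly $+\infty$), $\int_X (a^- \cdot_\ell h^-) d\nu = a^- \cdot \int_X h^- d\nu$ by ordinary homogeneity of the integral (handling the $0 \cdot_\ell (+\infty) = 0$ convention, which matches $0 \cdot (+\infty) = 0$ in $\creal$), and then $[\,a^- \cdot \int_X h^- d\nu,\ +\infty\,] = [a^-, a^+] \times [\int_X h^- d\nu, +\infty] = \mathbf a \times F(h)$, using the definition of $\times$ on $\Icreal$ and that $(+\infty) \cdot_r (+\infty) = +\infty$ together with $a^- \cdot_r (+\infty) = +\infty$ when $\int h^- d\nu$ may be $0$. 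For additivity, $(h+h')^- = h^- + h'^-$ pointwise, so $F(h+h') = [\int_X (h^- + h'^-) d\nu, +\infty] = [\int_X h^- d\nu + \int_X h'^- d\nu, +\infty]$, and this equals $F(h) + F(h') = [\int_X h^- d\nu, +\infty] + [\int_X h'^- d\nu, +\infty]$ since addition in $\Icreal$ is componentwise and $(+\infty) + (+\infty) = +\infty$. Thus $F \in \Val^{\Icreal} X$.

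Then $\nu_F = \nu$ is immediate from Lemma~\ref{lemma:Rcont->cont}: by that lemma $\nu_F$ is the unique continuous valuation with $\int_X f d\nu_F = F^-([f, +\infty.\mathbf 1])$ for all $f \in \Lform X$, and $F^-([f, +\infty.\mathbf 1]) = (\,[\int_X f d\nu, +\infty]\,)^- = \int_X f d\nu$, so $\nu_F$ and $\nu$ have the same integral functional, hence coincide. Finally, for minimality, let $G$ be any continuous $\Icreal$-valuation with $\nu_G = \nu$. Recall that in $\Icreal$ the order is reverse inclusion, so $F(h) \leq G(h)$ means $[\,G^-(h), G^+(h)\,] \subseteq [\int_X h^- d\nu, +\infty]$, i.e.\ $\int_X h^- d\nu \leq G^-(h)$ and $G^+(h) \leq +\infty$; the latter is automatic. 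For the former, Lemma~\ref{lemma:Rcont->cont} gives $G^-(h) = \int_X h^- d\nu_G = \int_X h^- d\nu$, so in fact $\int_X h^- d\nu = G^-(h)$, giving $F(h) \leq G(h)$ for every $h$, i.e.\ $F \leq G$ in $\Val^{\Icreal} X$. I expect no serious obstacle; the only points requiring care are the endpoint conventions $0 \cdot_\ell (+\infty) = 0$ and $0 \cdot_r (+\infty) = +\infty$ and their interaction with degenerate values ($\int h^- d\nu \in \{0, +\infty\}$, or $\mathbf a$ having a zero or infinite endpoint) in the homogeneity computation, which must be checked case by case but are routine.
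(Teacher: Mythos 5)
Your proof is correct, and since the paper states this lemma without giving a proof, your direct verification (well-definedness, Scott-continuity, additivity and homogeneity of $F$, then invoking Lemma~\ref{lemma:Rcont->cont} both to get $\nu_F=\nu$ and to show any $G$ with $\nu_G=\nu$ satisfies $G^-(h)=\int_X h^- d\nu=F^-(h)$, hence $F\leq G$) is exactly the routine argument the authors leave implicit. Two small polish points: in the homogeneity step the right endpoint of $\mathbf a\times F(h)$ is $a^+\cdot_r(+\infty)=+\infty$ (you wrote $a^-\cdot_r(+\infty)$, and the aside about $\int_X h^- d\nu$ being $0$ is irrelevant there), and the case $a^-=+\infty$ of homogeneity of the ordinary integral is cleanest via $\infty\cdot_\ell h^-=\dsup_{n\in\nat} n\,h^-$ together with Scott-continuity of $h\mapsto\int_X h\,d\nu$, which also confirms the convention $\infty\cdot_\ell 0=0$ on both sides.
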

For the continuous $\Icreal$-valuation just given, the view from the
right, namely $F^+ (h)$, is the constant $+\infty$, for every
integrand $h$, including for the constant zero map.  This cannot be
the integral of $h$ with respect to any measure, since the integral of the
zero map is always zero, with respect to any continuous valuation or
measure.

One possible view of continuous $\Icreal$-valuations $F$ is that of
the specification of some unknown measure.  $F^-$ gives a continuous
valuation that is a lower bound on that measure, while $F^+$ measures
how precise that specification is.  In this setting, the continuous
$\Icreal$-valuation $F$ built in Lemma~\ref{lemma:cont->Rcont} is the
least precise specification for $\nu$.  

On more special topological spaces, we will see that every measure
has a much more precise specification, and that it is minimal.


\section{Continuous $R$-valuations and measures II: Measures as continuous $\Icreal$-valuations}
\label{sec:meas-as-cont}

We will see that every non-zero, bounded $\tau$-smooth measure $\mu$
on a coherent topological space $X$ gives rise to a continuous
$\Icreal$-valuation in a natural way.  (A measure $\mu$ on $X$ is
\emph{bounded} if $\mu (X) < \infty$, and we recall that it is
$\tau$-smooth if and only if it restricts to a continuous valuation on
$\Open X$.)  As a first step, we need to define integrals of functions
with values in $\creal$, not just $\Rp$, as is done classically.

More precisely, given a measure $\mu$ on a topological space $X$ (with
its Borel $\sigma$-algebra), we can define the Lebesgue integral
$\int_{x \in X} f (x) d\mu$ of any measurable map
$f \colon X \to \Rp$.  We extend this definition to measurable maps
$f$ from $X$ to $\creal$.  Just as with multiplication in rags, this
comes in two flavors.

Perhaps the most natural extension is:
\begin{align}
  \label{eq:int-}
  \int_{x \in X}^- f (x) d\mu & \eqdef \dsup_{r \in \Rp} \int_{x \in
                                X} \min (f (x), r) d\mu.
\end{align}
It is known that the Lebesgue integral, as used on the right of
(\ref{eq:int-}) can be defined through the following, so-called
Choquet formula \cite[Chapter~VII, Section~48.1, p. 265]{choquet53}:
\begin{align}
  \label{eq:choquet}
  \int_{x \in X} f (x) d\mu & = \int_0^\infty \mu (f^{-1} (]t,
                              \infty])) dt
\end{align}
where the integral on the right is now an indefinite Riemann integral.
As a consequence, and since $(\min (f (\_), r)^{-1} (]t, \infty])$ is
empty for every $t \geq r$, and equal to $f^{-1} (]t, \infty])$ for
every $t < r$, we can rewrite (\ref{eq:int-}) as:
\begin{align}
  \label{eq:int-:choquet}
  \int_{x \in X}^- f (x) d\mu & = \dsup_{r \in \Rp} \int_0^r \mu (f^{-1} (]t,
                              \infty])) dt.
\end{align}

We observe that this lower integral is linear and $\omega$-continuous
(by the monotone convergence theorem).  It also commutes with the
product structure of the d-rag $\creal$.  We also note the following
change of variable formula, for future reference:
\begin{align}
  \label{eq:chgvar}
  \int_{y \in Y}^- f (y) dj[\mu] & = \int_{x \in X}^- f (j (x)) d\mu,
\end{align}
for every measurable map $j \colon X \to Y$, for every measurable
map $f \colon Y \to \creal$, for every measure $\mu$ on $X$, and where
$j [\mu]$ is the \emph{image measure}, defined by $j [\mu] (E) \eqdef
\mu (j^{-1} (E))$.  This is an obvious consequence of~(\ref{eq:int-:choquet}).

A function $f \colon X \to \creal$ is lower semicontinuous if and only
if it is continuous from $X$ to $\creal$ with the Scott topology;
equivalently, for every $r \in \Rp \diff \{0\}$,
$f^{-1} (]r, \infty])$ is open in $X$.  Every lower semicontinuous
function is measurable.

\begin{lemma}
  \label{lemma:int-}
  The lower integral (\ref{eq:int-}) is:
  \begin{enumerate}
  \item additive: for all measurable maps $f$, $g$ from $X$ to $\creal$,
    $\int_{x \in X}^- (f (x) + g (x)) d\mu = \int_{x \in X}^- f (x)
    d\mu + \int_{x \in X}^- g (x) d\mu$;
  \item $\cdot_\ell$-homogeneous: for every measurable map
    $f \colon X \to \creal$, for every $a \in \creal$,
    $a \cdot_\ell \int_{x \in X}^- f (x) d\mu = \int_{x \in X}^- (a
    \cdot_\ell f (x)) d\mu$;
  \item $\omega$-continuous: for every monotonic sequence
    ${(f_n)}_{n \in \nat}$ of measurable maps from $X$ to $\creal$,
    $\int_{x \in X}^- \dsup_{n \in \nat} f_n (x) d\mu = \dsup_{n \in
      \nat} \int_{x \in X}^- f_n (x) d\mu$;
  \item Scott-continuous on lower semicontinuous maps, provided that
    $\mu$ is $\tau$-smooth: for every directed family
    ${(f_i)}_{i \in I}$ of lower semicontinuous maps from $X$ to
    $\creal$,
    $\int_{x \in X}^- \dsup_{i \in I} f_i (x) d\mu = \dsup_{i \in I}
    \int_{x \in X}^- f_i (x) d\mu$.
  \end{enumerate}
\end{lemma}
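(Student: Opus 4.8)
The plan is to prove the four items essentially separately, with two caveats: item~(2) for the scalar $a = +\infty$ will lean on item~(3), and item~(4) will lean on the Scott-continuity of the integration functional of an ordinary continuous valuation recalled in Section~\ref{sec:preliminaries}. Two elementary observations will be used throughout. First, for each $r \in \Rp$ the truncation map $\min(\_, r) \colon \creal \to \creal$ is Scott-continuous, so $\min(\dsup_i f_i, r) = \dsup_i \min(f_i, r)$ for every directed family ${(f_i)}_i$. Second, since addition and multiplication by a fixed finite scalar are Scott-continuous on $\creal$, the supremum of a pointwise sum, or finite-scalar multiple, of monotone families in $\creal$ is the sum, resp.\ the multiple, of the suprema; and two directed suprema indexed by $\Rp$ may be interchanged freely.

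For item~(3), given a pointwise monotonic sequence ${(f_n)}_n$ with pointwise supremum $f$, I would compute $\dsup_r \int_X \min(f, r)\, d\mu = \dsup_r \int_X \dsup_n \min(f_n, r)\, d\mu$; each $\min(f_n, r)$ is an ordinary $[0, r]$-valued measurable function, so the classical monotone convergence theorem replaces the inner integral by $\dsup_n \int_X \min(f_n, r)\, d\mu$, and interchanging the two directed suprema gives $\dsup_n \int^-_X f_n\, d\mu$. For item~(1), classical additivity of the Lebesgue integral on $\Rp$-valued measurable maps yields $\int_X \min(f, r)\, d\mu + \int_X \min(g, r)\, d\mu = \int_X (\min(f, r) + \min(g, r))\, d\mu$; combining this with the pointwise sandwich $\min(f + g, r) \le \min(f, r) + \min(g, r) \le \min(f + g, 2r)$ and taking directed suprema over $r$ (reindexing $r \mapsto 2r$ on the right) gives the claim. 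For item~(2), when $a$ is finite I would use $\min(a f(x), r) = a \cdot \min(f(x), r/a)$ for $a > 0$ and reindex $r \mapsto r/a$, the case $a = 0$ being trivial since $0 \cdot_\ell f$ is the constant zero map; when $a = +\infty$, I would note that $(+\infty) \cdot_\ell f = \dsup_{n \in \nat} n f$ pointwise and invoke item~(3) together with the finite-scalar case already established.

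Item~(4) is where the hypothesis that $\mu$ be $\tau$-smooth is needed, and I expect it to be the main obstacle, precisely because directed --- as opposed to sequential --- suprema need not commute with Lebesgue integration in general. The idea is to reduce to continuous valuations: set $\nu \eqdef \mu|_{\Open X}$, which is a continuous valuation exactly because $\mu$ is $\tau$-smooth, and observe that for any lower semicontinuous $g \colon X \to \creal$ the Lebesgue integral $\int_X g\, d\mu$ and the valuation integral $\int_X g\, d\nu$ coincide, since both are given by Choquet's formula $\int_0^\infty \mu(g^{-1}(]t, \infty]))\, dt$ and $g^{-1}(]t, \infty])$ is open, so $\mu$ and $\nu$ agree on it. Now for a directed family ${(f_i)}_i$ of lsc maps with (lsc) supremum $f$, the maps $\min(f_i, r)$ form a directed family in $\Lform X$ with supremum $\min(f, r)$, so the Scott-continuity of $h \mapsto \int_X h\, d\nu$ on $\Lform X$ recalled in Section~\ref{sec:preliminaries} gives $\int_X \min(f, r)\, d\mu = \dsup_i \int_X \min(f_i, r)\, d\mu$; taking the directed supremum over $r$ and interchanging suprema once more yields $\int^-_X f\, d\mu = \dsup_i \int^-_X f_i\, d\mu$. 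The only delicate point is the identification of the two integrals for lsc integrands, which is where $\tau$-smoothness genuinely enters.
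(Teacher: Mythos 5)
Your proof is correct, and items~(1) and~(3) follow the paper's own argument exactly (the sandwich $\min(f+g,r)\leq\min(f,r)+\min(g,r)\leq\min(f+g,2r)$ plus classical additivity, and monotone convergence plus truncation). Where you diverge is in the two places that actually require care. For item~(2) with $a=+\infty$, the paper argues by cases on whether $\mu(f^{-1}(]0,\infty]))$ is zero or positive, computing both sides directly; you instead write $\infty\cdot_\ell x=\dsup_{n\in\nat}n\cdot_\ell x$ and push the supremum through using item~(3) and the finite-scalar case, which is a slightly slicker argument and is sound, since ${(n\cdot_\ell f)}_n$ is indeed pointwise monotone and the same identity applied to the scalar $\int^-_X f\,d\mu$ closes the loop. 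For item~(4), the paper works straight from the Choquet form of the lower integral, using $\tau$-smoothness to pass the directed supremum through $t\mapsto\mu(f^{-1}(]t,\infty]))$ and then invoking Scott-continuity of Riemann integration of antitone maps (Tix's lemma); you instead identify, for lower semicontinuous integrands, the Lebesgue integral with the valuation integral against $\nu\eqdef\mu|_{\Open X}$ via the common Choquet formula, and then quote the Scott-continuity of $h\in\Lform X\mapsto\int h\,d\nu$ recalled in Section~\ref{sec:preliminaries}. This is legitimate given that the preliminaries assert that fact, and it makes the role of $\tau$-smoothness very transparent; the trade-off is that the cited Scott-continuity of the valuation integral is itself usually proved by exactly the Choquet--Riemann argument the paper uses, so your route delegates rather than eliminates that ingredient.
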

\begin{proof}
  1. This follows from the additivity of Lebesgue integral of
  $\Rp$-valued functions and the inequalities
  $\min (f (x) + g (x), r) \leq \min (f (x), r) + \min (g (x), r) \leq
  \min (f (x) + g (x), 2r)$.

  2. For every $a \in \Rp \diff \{0\}$,
  \begin{align*}
    a \cdot_\ell \int_{x \in X}^- f (x) d\mu
    & = \dsup_{r \in \Rp} \int_{x \in X} a \min (f (x), r) d\mu \\
    & = \dsup_{r' \in \Rp} \int_{x \in X} \min (a \cdot_\ell f (x),
      r') d\mu & \text{by letting }r' \eqdef ar \\
    & = \int_{x \in X}^- (a \cdot_\ell f (x)) d\mu.
  \end{align*}
  When $a=0$, $a \cdot_\ell \int_{x \in X}^- f (x)
  d\mu$ is equal to $0$ by definition, and $\int_{x \in X}^- (a
  \cdot_\ell f (x)) d\mu = \int_{x \in X} 0 d\mu = 0$.
  
  The new, key case is when $a=\infty$.  We split this into two
  subcases.  If $\mu (f^{-1} (]0, \infty]))=0$, namely if $f$ is
  $\mu$-a.e.\ zero, then $\min (f (\_), r)$ and
  $\min (\infty \cdot_\ell f (\_), r)$ are also $\mu$-a.e.\ zero, so
  $\infty \cdot_\ell \int_{x \in X}^- f (x) d\mu$ and
  $\int_{x \in X}^- (\infty \cdot_\ell f (x)) d\mu$ are both equal to
  $0$.  If $\mu (f^{-1} (]0, \infty])) > 0$, then
  $\mu (f^{-1} (]r, \infty])) > 0$ for some $r \in \Rp \diff \{0\}$,
  since
  $\mu (f^{-1} (]0, \infty])) = \dsup_{q \in \rat_+ \diff \{0\}} \mu
  (f^{-1} (]q, \infty]))$.  It follows that
  $\int_{x \in X}^- f (x) d\mu \geq r \mu (f^{-1} (]r, \infty])) > 0$,
  so $\infty \cdot_\ell \int_{x \in X}^- f (x) d\mu = \infty$, while
  $\int_{x \in X}^- \infty \cdot_\ell f (x) d\mu \geq \int_{x \in X}
  \min (\infty \cdot_\ell f (x), r) d\mu \geq r \mu (f^{-1} (]0,
  \infty]))$ for every $r \in \Rp \diff \{0\}$, so that
  $\int_{x \in X}^- \infty \cdot_\ell f (x) d\mu = \infty$ as well.

  3. We use the monotone convergence theorem, and the fact that
  $\dsup_{n \in \nat} \min (f_n (x), r) = \min (\dsup_{n \in \nat} f_n
  (x), r)$ for all $x \in X$ and $r \in \Rp$.

  4.  Since Riemann integration of non-increasing maps from $\creal$
  to $\creal$ is Scott-continuous (see for example Lemma~4.2 in
  \cite{tix95}), it follows from (\ref{eq:int-:choquet}) that the
  lower integral $\int_{x \in X} f (x) d\mu$ is Scott-continuous in
  the lower semicontinuous map $f$, provided that $\mu$ is
  $\tau$-smooth.
\end{proof}

We also consider the following, upper integral.  This will really only
make sense when the integrated function $f$ is upper semicontinuous,
namely when for every $r \in \Rp \diff \{0\}$, $f^{-1} ([0, r[)$ is
open in $X$; and when the measure $\mu$ is non-zero
($\mu (X) \neq 0$), and $\tau$-smooth.

A \emph{support} of a measure $\mu$ on $X$ is any set $E$ such that,
for all measurable subsets $A$ and $B$ of $X$ such that $A \cap E = B
\cap E$, $\mu (A) = \mu (B)$.  When $E$ is itself measurable, this is
equivalent to requiring $\mu (E) = \mu (X)$, and when $\mu$ is
additionally bounded (i.e., $\mu (X) < \infty$), this is equivalent to
$\mu (X \diff E) = 0$.  We sometimes say that $\mu$ is \emph{supported
  on $E$} to mean that $E$ is a support of $\mu$.

For every $\tau$-smooth measure $\mu$ on $X$, the intersection of all
closed supports of $\mu$ is again a closed support of $\mu$: this
smallest closed support will be denote as $\supp \mu$.  But beware
that there might be smaller (non-closed) supports.  For example,
$\supp (\delta_x)$ is equal to the closure $\dc x$ of the point $x$,
but $\{x\}$ is a smaller (non-closed) support.  Note that $\{x\}$ is
the intersection of the compact saturated set $\upc x$, which happens
to be a support of $\mu = \delta_x$, with $\supp (\mu)$.

In general, not all compact saturated sets $Q$ are measurable, so we
will restrict to \emph{measurable} compact saturated subsets in the
sequel.  The intersection of two supports $E$ and $E'$ may also fail
to be a support, but if one of them (say $E'$) is measurable, then
$E \cap E'$ is also a support.  (Indeed, let $A$, $B$ be measurable
such that $A \cap (E \cap E') = B \cap (E \cap E')$.  Then
$(A \cap E') \cap E = (B \cap E') \cap E$, and since $E$ is a support
of $\mu$, $\mu (A \cap E') = \mu (B \cap E')$.  Since $E'$ is a
support of $\mu$, and since $A$ and $A \cap E'$ have the same
intersection with $E'$, $\mu (A \cap E') = \mu (A)$, and similarly
$\mu (B \cap E') = \mu (B)$.  Therefore $\mu (A) = \mu (B)$.)


For every $\tau$-smooth measure $\mu$ on $X$, we say that a measurable
map $f \colon X \to \creal$ is \emph{$\mu$-bounded} if and only if
there is a measurable compact saturated support $Q$ of $\mu$ such that
$f$ is bounded on $Q \cap \supp \mu$, namely if
$\sup_{x \in Q \cap \supp \mu} f (x) < \infty$.  We will also say that
$Q$ is a \emph{witness} of $\mu$-boundedness of $f$, or that $f$ is
$\mu$-bounded, \emph{witnessed by $Q$}, in that case.  We say that $f$
is \emph{$\mu$-unbounded} if it is not $\mu$-bounded.

\begin{lemma}
  \label{lemma:Qsupp}
  For every non-zero $\tau$-smooth measure $\mu$ on a topological
  space $X$, for every compact saturated support $Q$ of $\mu$,
  $Q \cap \supp \mu$ is non-empty.
\end{lemma}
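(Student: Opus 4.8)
The plan is to prove the statement by contradiction: assume $Q \cap \supp \mu = \emptyset$, and deduce that $\mu (X) = 0$, contradicting the hypothesis that $\mu$ is non-zero.

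First I would record the two facts needed, both available from the material preceding the lemma. Since $\mu$ is $\tau$-smooth, $\supp \mu$ is well-defined and is itself a (closed) support of $\mu$; and, as observed in the discussion above the lemma, if $E$ and $E'$ are both supports of $\mu$ and $E'$ is measurable, then $E \cap E'$ is again a support of $\mu$. Recall also the standing convention that ``compact saturated support'' here means \emph{measurable} compact saturated support, so $Q$ is measurable. Applying the intersection fact with $E \eqdef \supp \mu$ and $E' \eqdef Q$, the set $Q \cap \supp \mu$ is a support of $\mu$.

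Now I would exploit the defining property of a support in the degenerate case: if $\emptyset$ is a support of $\mu$, then for all measurable $A, B$ the premise $A \cap \emptyset = B \cap \emptyset$ holds trivially, so $\mu (A) = \mu (B)$; taking $A \eqdef X$ and $B \eqdef \emptyset$ gives $\mu (X) = \mu (\emptyset) = 0$. Under the contradiction hypothesis $Q \cap \supp \mu = \emptyset$, the previous step makes $\emptyset$ a support of $\mu$, hence $\mu (X) = 0$, contradicting non-zeroness; therefore $Q \cap \supp \mu \neq \emptyset$. There is essentially no serious obstacle here; the only point requiring care is the bookkeeping of measurability, which is precisely why the paper restricts to measurable compact saturated sets (no use is made of compactness or saturation of $Q$ in this particular lemma). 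If one prefers to avoid the intersection-of-supports fact altogether, the same contradiction is reached directly: since $Q$ is a measurable support, its support property applied to $A \eqdef X$, $B \eqdef Q$ yields $\mu (X) = \mu (Q)$; and the support property of $\supp \mu$ applied to $A \eqdef Q$, $B \eqdef \emptyset$ (which have the same, empty, intersection with $\supp \mu$) yields $\mu (Q) = \mu (\emptyset) = 0$, so again $\mu (X) = 0$.
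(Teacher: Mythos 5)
Your proof is correct, and it follows the same overall strategy as the paper: assume $Q \cap \supp \mu = \emptyset$ and derive $\mu (X) = 0$ from the support properties, contradicting non-zeroness. The only substantive difference is bookkeeping: both of your routes lean on the measurability of $Q$ (either to invoke the intersection-of-supports fact with $E' \eqdef Q$, or to get $\mu (Q) = \mu (X)$ and to use $Q$, $\emptyset$ as the measurable test pair for $\supp \mu$), whereas the paper's proof is the mirror image of your second route and never needs $Q$ to be measurable: it applies the support property of $Q$ to the measurable pair $\supp \mu$, $\emptyset$ (these have the same, empty, intersection with $Q$), giving $\mu (\supp \mu) = \mu (\emptyset) = 0$, and then uses that $\supp \mu$ is a closed, hence measurable, support, so $\mu (\supp \mu) = \mu (X)$. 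So the lemma as literally stated holds for arbitrary compact saturated supports; if you want to avoid appealing to the ``measurable in the sequel'' convention, either argue as the paper does, or apply the intersection fact with the roles swapped, taking $E \eqdef Q$ and $E' \eqdef \supp \mu$ (the measurable one).
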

\begin{proof}
  Otherwise, $\supp \mu$ and the empty set have the same intersection
  with $Q$, and since $Q$ is a support of $\mu$, we would have
  $\mu (\supp \mu) = \mu (\emptyset) = 0$.  Since $\supp \mu$ is a
  measurable support of $\mu$, $\mu (\supp \mu) = \mu (X)$, and
  therefore we would have $\mu (X)=0$, contradicting the fact that
  $\mu$ is non-zero.
\end{proof}

\begin{lemma}
  \label{lemma:max}
  For every non-zero $\tau$-smooth measure $\mu$ on a topological
  space $X$, and for every upper semicontinuous map
  $f \colon X \to \creal$, for every compact saturated
  support $Q$ of $\mu$, there is a point $x \in Q \cap \supp \mu$ such
  that $f (x) = \sup_{x \in Q \cap \supp \mu} f (x)$.
\end{lemma}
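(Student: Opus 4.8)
The plan is to recognize Lemma~\ref{lemma:max} as an instance of the classical extreme value theorem: an upper semicontinuous $\creal$-valued map attains its supremum on a nonempty compact set. So the first step is to check that $K \eqdef Q \cap \supp \mu$ is such a set. It is nonempty by Lemma~\ref{lemma:Qsupp}, and it is compact because $\supp \mu$ is closed, $Q$ is compact, and a closed subset of a compact set is compact (saturation of $Q$ plays no role here). Hence $M \eqdef \sup_{x \in K} f (x)$ is a well-defined element of $\creal$, and the goal is to produce $x \in K$ with $f (x) = M$.

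Next I would argue by contradiction, assuming $f (x) < M$ for every $x \in K$, and derive a contradiction by a covering argument using the definition of upper semicontinuity recalled above, namely that $f^{-1} ([0, r[)$ is open for every $r \in \Rp \diff \{0\}$. If $M = 0$ there is nothing to prove, so assume $M > 0$ and split into two cases. If $M = +\infty$, then each $x \in K$ satisfies $f (x) < n$ for some positive integer $n$, so the open sets $f^{-1} ([0, n[)$ with $n \in \nat$, $n \geq 1$, cover $K$; by compactness finitely many suffice, giving $K \subseteq f^{-1} ([0, N[)$ for some $N$, hence $M \leq N < +\infty$, a contradiction. If $M < +\infty$, then each $x \in K$ lies in $f^{-1} ([0, r[)$ for some real $r$ with $f (x) < r < M$ (and such an $r$ is automatically $> 0$, since $r > f (x) \geq 0$ and $M > 0$); these open sets cover $K$, a finite subfamily indexed by $r_1, \dots, r_k$ already covers $K$, and with $r_* \eqdef \max_i r_i < M$ we obtain $K \subseteq f^{-1} ([0, r_*[)$, so that $M \leq r_* < M$, again a contradiction. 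Therefore the assumption is untenable and $f$ attains the value $M$ at some point of $K$.

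I do not expect any real obstacle here: the argument is exactly the standard compactness proof of the extreme value theorem, and the only point requiring a little care is the bookkeeping for $\creal$-valued functions, in particular the separate treatment of the case $M = +\infty$, handled by the first covering argument above. The compactness of $Q \cap \supp \mu$ together with its nonemptiness, supplied by Lemma~\ref{lemma:Qsupp}, are precisely what make the argument work; in particular it is essential that we intersect with a compact saturated \emph{support} $Q$ of $\mu$ and not with an arbitrary compact set, for otherwise $Q \cap \supp \mu$ could be empty and the claimed point would not exist.
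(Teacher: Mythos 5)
Your proof is correct and follows essentially the same route as the paper's: the standard finite-subcover argument showing that an upper semicontinuous $\creal$-valued map attains its supremum on a non-empty compact set, applied to $K = Q \cap \supp\mu$, whose non-emptiness comes from Lemma~\ref{lemma:Qsupp} and whose compactness comes from intersecting the compact set $Q$ with the closed set $\supp\mu$. Your explicit case split ($M=0$, $M=+\infty$, $0<M<+\infty$) is just a slightly more careful bookkeeping of the single covering argument the paper gives.
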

\begin{proof}
  Every upper semicontinuous $\creal$-valued function $f$ reaches its
  maximum on any non-empty compact set $K$.  Here is a quick proof:
  let $a \eqdef \sup_{x \in K} f (x)$, and assume that $f (x) < a$ for
  every $x \in K$.  The open sets $f^{-1} ([0, r[)$ with
  $r \in [0, a[$ form an open cover of $K$.  We extract a finite
  subcover $f^{-1} ([0, r[)$, where $r$ ranges over some finite set
  $A$ of numbers strictly below $a$.  This implies that, for every
  $x \in K$, $f (x) < r$ for some $r \in A$, so that
  $a = \sup_{x \in K} f (x) < \max A < a$, a contradiction.
  
  We now apply this to $K \eqdef Q \cap \supp \mu$, which is non-empty
  by Lemma~\ref{lemma:Qsupp}.
\end{proof}

\begin{corollary}
  \label{corl:muunbound}
  For every non-zero $\tau$-smooth measure $\mu$ on a topological
  space $X$, for every upper semicontinuous map
  $f \colon X \to \creal$, $f$ is $\mu$-unbounded if and only if for
  every measurable compact saturated support $Q$ of $\mu$, there is a
  point $x \in Q \cap \supp \mu$ such that $f (x) = \infty$.  \qed
\end{corollary}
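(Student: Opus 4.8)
The plan is to derive Corollary~\ref{corl:muunbound} directly from Lemma~\ref{lemma:max}, the content being almost entirely in that lemma. First I would dispose of the ($\Leftarrow$) implication, which needs nothing beyond the stated hypotheses: if for every measurable compact saturated support $Q$ of $\mu$ there is some $x \in Q \cap \supp\mu$ with $f(x) = \infty$, then $\sup_{x \in Q \cap \supp\mu} f(x) = \infty$ for every such $Q$, so no $Q$ can witness $\mu$-boundedness of $f$, and hence $f$ is $\mu$-unbounded by definition.

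For the ($\Rightarrow$) implication, I would unfold the definition of $\mu$-unboundedness. That $f$ is $\mu$-unbounded means exactly that there is no measurable compact saturated support $Q$ of $\mu$ with $\sup_{x \in Q \cap \supp\mu} f(x) < \infty$, i.e.\ that $\sup_{x \in Q \cap \supp\mu} f(x) = \infty$ for every measurable compact saturated support $Q$ of $\mu$. Now fix one such $Q$. Since $f$ is upper semicontinuous and $\mu$ is non-zero and $\tau$-smooth, Lemma~\ref{lemma:max} applies and yields a point $x \in Q \cap \supp\mu$ with $f(x) = \sup_{x \in Q \cap \supp\mu} f(x)$; by the previous sentence this supremum is $\infty$, so $f(x) = \infty$, which is what we wanted.

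There is essentially no obstacle here: the real work is already done in Lemma~\ref{lemma:max} (which rests on the fact that an upper semicontinuous $\creal$-valued map attains its maximum on a non-empty compact set, together with the non-emptiness of $Q \cap \supp\mu$ supplied by Lemma~\ref{lemma:Qsupp}). The only points requiring care are to state $\mu$-unboundedness correctly as a condition quantified over \emph{all} measurable compact saturated supports $Q$, and to note that ``the supremum equals $\infty$ and the supremum is attained'' forces the existence of a value-$\infty$ point; both are immediate.
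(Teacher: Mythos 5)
Your proposal is correct and is exactly the argument the paper intends: the corollary is stated with no proof (just \qed) precisely because it follows immediately from the definition of $\mu$-unboundedness together with Lemma~\ref{lemma:max}, which guarantees the supremum over $Q \cap \supp\mu$ is attained. Your handling of both directions, including applying Lemma~\ref{lemma:max} to each measurable compact saturated support, matches this intended route.
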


This being done, for a $\tau$-smooth measure $\mu$ and an upper
semicontinuous map $f \colon X \to \creal$, we define:
\begin{align}
  \label{eq:int+}
  \int_{x \in X}^+ f (x) d\mu
  & \eqdef \left\{
    \begin{array}{ll}
      \int_{x \in X}^- f (x) d\mu
      & \text{if $f$ is $\mu$-bounded} \\
      \infty & \text{otherwise.}
    \end{array}
                 \right.
\end{align}

We say that a topological space is \emph{coherent} if and only if the
intersection of any two compact saturated subets is compact (and saturated).
\begin{lemma}
  \label{lemma:int+}
  Let $\mu$ be a non-zero $\tau$-smooth measure on a topological space
  $X$.  The upper integral (\ref{eq:int+}) is:
  \begin{enumerate}
  \item additive if $X$ is coherent: for all upper semicontinuous maps
    $f$, $g$ from $X$ to $\creal$,
    $\int_{x \in X}^+ (f (x) + g (x)) d\mu = \int_{x \in X}^+ f (x)
    d\mu + \int_{x \in X}^+ g (x) d\mu$;
  \item $\cdot_r$-homogeneous: for every upper semicontinuous map
    $f \colon X \to \creal$, for every $a \in \creal$,
    $a \cdot_r \int_{x \in X}^+ f (x) d\mu = \int_{x \in X}^+ (a
    \cdot_r f (x)) d\mu$;
  \item Scott-cocontinuous if $\mu$ is also bounded: for every
    filtered family ${(f_i)}_{i \in I}$ of upper semicontinuous maps
    from $X$ to $\creal$,
    $\int_{x \in X}^+ \finf_{i \in I} f_i (x) d\mu = \finf_{i \in I}
    \int_{x \in X}^+ f_i (x) d\mu$;
  \item above the lower integral: for every measurable map
    $g \colon X \to \creal$, for every upper semicontinuous map
    $f \colon X \to \creal$ such that $g \leq f$ on
    $E \cap \supp \mu$, where $E$ is any measurable support of $\mu$,
    $\int_{x \in X}^- g (x) d\mu \leq \int_{x \in X}^+ f (x) d\mu$;
  \item for every $\mu$-bounded upper semicontinuous map
    $g \colon X \to \creal$, witnessed by $Q$,
    $\int_{x \in X}^- g (x) d\mu = \int_{x \in X}^+ g (x) d\mu$ is the
    usual Lebesgue integral
    $\int_{x \in X} g (x) \One {Q \cap \supp \mu} (x) d\mu$;
  \end{enumerate}
\end{lemma}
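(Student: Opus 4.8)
The plan is to prove the five items in the order (5), (4), (2), (1), (3), the last being much the hardest. Three facts are used throughout. First, $\supp\mu$ is a measurable support of $\mu$, and the intersection of a support with a measurable support is again a support; hence, for every compact saturated support $Q$, the set $Q\cap\supp\mu$ is a support, is compact (being closed in $Q$), is non-empty by Lemma~\ref{lemma:Qsupp}, and is measurable whenever $Q$ is. Second, the lower integral $\int^-(\cdot)\,d\mu$ is monotone and is unchanged if the integrand is modified outside a measurable support; both follow from the Choquet formula~(\ref{eq:choquet}) applied to the truncations $\min(f,r)$, and together they give monotonicity of $\int^-$ up to a measurable support. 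Third, sums, pointwise minima, and pointwise infima of upper semicontinuous $\creal$-valued maps are again upper semicontinuous. Granting these, item~(5) is the bridge to classical integration: if $g$ is $\mu$-bounded, witnessed by $Q$, then $\int^+ g\,d\mu=\int^- g\,d\mu$ by definition~(\ref{eq:int+}); $g$ and $g\,\One{Q\cap\supp\mu}$ agree on the measurable support $Q\cap\supp\mu$, so their lower integrals coincide; and $g\,\One{Q\cap\supp\mu}$ is bounded with support inside $Q\cap\supp\mu$, so its truncations stabilise and $\int^-$ of it is the ordinary Lebesgue integral $\int_X g\,\One{Q\cap\supp\mu}\,d\mu$. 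Item~(4) follows at once: if $f$ is $\mu$-unbounded the right-hand side is $\infty$; otherwise, with $Q$ a witness, $g\leq f$ on the measurable support $E\cap Q\cap\supp\mu$, so $\int^- g\,d\mu\leq\int^- f\,d\mu=\int^+ f\,d\mu$.

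For item~(2) I would split on $a\in\creal$. When $0<a<\infty$, $a\cdot_r$ is ordinary multiplication, $a\cdot_r f$ is upper semicontinuous with exactly the same witnesses of $\mu$-boundedness as $f$, and the identity reduces to the $\cdot_\ell$-homogeneity of $\int^-$ (Lemma~\ref{lemma:int-}(2)). The cases $a=\infty$ and $a=0$ are dealt with by a short case analysis on whether $f$ --- equivalently $a\cdot_r f$, and when $a=0$ also $\int^+ f\,d\mu$ --- is $\mu$-bounded or finite, using Corollary~\ref{corl:muunbound} and the conventions $\infty\cdot_r b=\infty$, $0\cdot_r\infty=\infty$ (so that, for instance, $\infty.\mathbf 1$ is always $\mu$-unbounded because $Q\cap\supp\mu$ is non-empty). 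For item~(1), if $f$ and $g$ are both $\mu$-bounded, with witnesses $Q_f$ and $Q_g$, then coherence makes $Q\eqdef Q_f\cap Q_g$ a compact saturated measurable support, $f+g$ is bounded on $Q\cap\supp\mu$, hence $\mu$-bounded witnessed by $Q$, and $\int^+(f+g)\,d\mu=\int^-(f+g)\,d\mu=\int^- f\,d\mu+\int^- g\,d\mu=\int^+ f\,d\mu+\int^+ g\,d\mu$ by the additivity in Lemma~\ref{lemma:int-}(1). If instead one of $f,g$ is $\mu$-unbounded, then by Corollary~\ref{corl:muunbound}, $f+g$ takes the value $\infty$ at some point of $Q\cap\supp\mu$ for every measurable compact saturated support $Q$, so $f+g$ is $\mu$-unbounded and both sides are $\infty$. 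This is the only place where coherence is used.

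Item~(3) is the crux. Monotonicity of $\int^+$ on upper semicontinuous maps is immediate from~(\ref{eq:int+}) and the monotonicity of $\int^-$, since a witness for the larger map is a witness for the smaller; this already yields $\int^+(\finf_{i\in I} f_i)\,d\mu\leq\finf_{i\in I}\int^+ f_i\,d\mu$ and settles the case where $\finf_{i\in I} f_i$ is $\mu$-unbounded (then every $\int^+ f_i\,d\mu$ equals $\infty$). So suppose $f\eqdef\finf_{i\in I} f_i$ is $\mu$-bounded, witnessed by $Q$, with $f\leq M<\infty$ on the compact set $K\eqdef Q\cap\supp\mu$. Using compactness of $K$ and filteredness of $(f_i)_{i\in I}$ --- for each $x\in K$ pick $i(x)$ with $f_{i(x)}(x)<M+1$, extract a finite subcover of $K$ by the open sets $f_{i(x)}^{-1}([0,M+1[)$, and choose a lower bound $f_j$ of the finitely many $f_{i(x)}$ involved --- I pass to the cofinal subfamily of those $f_i\leq f_j$, which has the same filtered infimum $f$ and on which every $f_i$ is $\leq M+1$ on $K$, hence $\mu$-bounded witnessed by $Q$. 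On this subfamily, truncate to $\tilde f_i\eqdef\min(f_i,(M+1).\mathbf 1)$ and $\tilde f\eqdef\min(f,(M+1).\mathbf 1)$; these agree with $f_i$, respectively $f$, on $K$, so $\int^-\tilde f_i\,d\mu=\int^+ f_i\,d\mu$ and $\int^-\tilde f\,d\mu=\int^+ f\,d\mu$. The maps $h_i\eqdef(M+1).\mathbf 1-\tilde f_i$ are lower semicontinuous and form a directed family with pointwise supremum $(M+1).\mathbf 1-\tilde f$, so Lemma~\ref{lemma:int-}(4) (using $\tau$-smoothness) together with the additivity Lemma~\ref{lemma:int-}(1) give $\int^-\tilde f_i\,d\mu=(M+1)\mu(X)-\int^- h_i\,d\mu$ and $\int^-\tilde f\,d\mu=(M+1)\mu(X)-\dsup_{i\in I}\int^- h_i\,d\mu$; since $\mu$ is bounded, $(M+1)\mu(X)$ is finite, so the filtered infimum passes through the subtraction and $\int^+ f\,d\mu=\finf_{i\in I}\int^+ f_i\,d\mu$.

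The main obstacle is exactly this last manoeuvre in item~(3). The lower integral is only known to be Scott-continuous on lower semicontinuous maps (Lemma~\ref{lemma:int-}(4)), not Scott-cocontinuous, so a filtered family of upper semicontinuous maps cannot be integrated termwise. The fix rests on two genuine hypotheses: (i) the compactness of $Q\cap\supp\mu$ together with the filteredness of $(f_i)_{i\in I}$, which permits the reduction to a uniformly bounded cofinal subfamily sharing a single witness; and (ii) the boundedness of $\mu$, which makes the complementation $h_i=(M+1).\mathbf 1-\tilde f_i$ legitimate, all the integrals in sight being finite. By contrast, items~(1), (2), (4), (5) are routine once Corollary~\ref{corl:muunbound} is invoked to control the points where an integrand attains $+\infty$ and Lemma~\ref{lemma:int-} is invoked for the lower integral.
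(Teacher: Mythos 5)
Your proposal is correct and, for items 5, 4, 1 and 2, it is essentially the paper's proof: item 5 via invariance of the lower integral under changes outside the measurable support $Q \cap \supp \mu$ (the paper's ``easy exercise'' with (\ref{eq:int-:choquet})), item 1 via coherence for the intersection of two witnesses plus Corollary~\ref{corl:muunbound} in the unbounded case, item 2 via Lemma~\ref{lemma:int-}, item~2 and the conventions on $\cdot_r$ (your $a=0$, $a=\infty$ cases are left as a sketch, at the same level of detail as the paper), and item 4 via item 5 and monotonicity of $\int^-$ over the support $E \cap Q \cap \supp \mu$. The one genuine divergence is item 3. The paper splits on whether some $f_{i_0}$ is $\mu$-bounded: in the negative case it uses compactness of $Q \cap \supp \mu$ and directedness of the sublevel opens $f_i^{-1} ([0, r[)$ to conclude that $f$ is $\mu$-unbounded; in the positive case it restricts to $i \preceq i_0$ and uses the complementation $r\mu(X) - \int (r - f_i (x)) \One{Q \cap \supp \mu} (x)\, d\mu$. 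You split on whether $f = \finf_{i \in I} f_i$ is $\mu$-bounded instead: your unbounded case is trivial by monotonicity, and the same compactness-plus-filteredness argument reappears in your bounded case, now used positively to extract a $\mu$-bounded $f_j$ and a coinitial subfamily uniformly bounded by $M+1$ on $Q \cap \supp \mu$. Your truncations $\tilde f_i = \min (f_i, (M+1).\mathbf 1)$ make the complements $(M+1).\mathbf 1 - \tilde f_i$ genuinely lower semicontinuous, so Lemma~\ref{lemma:int-}, item~4 applies literally; this is a little cleaner than the paper's maps $(r - f_i) \One{Q \cap \supp \mu}$, which are not lower semicontinuous as written (the paper's step is repaired exactly as you do, since those maps agree with $r.\mathbf 1 - \min (f_i, r.\mathbf 1)$ on the measurable support $Q \cap \supp \mu$). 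Both arguments use the same two ingredients---compactness/filteredness and boundedness of $\mu$ to license the subtraction---so they are of equal strength; yours buys a strictly literal appeal to Lemma~\ref{lemma:int-}, item~4, the paper's a slightly shorter bounded case.
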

\begin{proof}
  We prove item~5 first.  When $g$ is $\mu$-bounded, witnessed by $Q$,
  we can define a new measurable map $g . \One {Q \cap \supp \mu}$,
  which maps every $x \in Q \cap \supp \mu$ to $g (x)$, and all other
  points to $0$.  Then $g . \One {Q \cap \supp \mu}$ is bounded, and
  coincides with $g$ on $Q \cap \supp \mu$.  Since the latter is a
  support of $\mu$, it is an easy exercise, using
  (\ref{eq:int-:choquet}), to show that $\int_{x \in X}^- g (x) d\mu$
  is equal to
  $\int_{x \in X}^- g (x) . \One {Q \cap \supp \mu} (x) d\mu$, which
  is the ordinary Lebesgue integral
  $\int_{x \in X} g (x) . \One {Q \cap \supp \mu} (x) d\mu$.

  1. If $f$ and $g$ are both $\mu$-bounded, witnessed respectively by
  $Q$ and $Q'$, then so is $f+g$, witnessed by $Q \cap Q'$.  The
  latter is measurable, and compact saturated since $X$ is coherent.
  It is also a support of $\mu$, since $Q$ (or $Q'$) is measurable.
  The claim then follows from Lemma~\ref{lemma:int-}, item~1.

  If, say, $f$ is not $\mu$-bounded, then for every measurable compact
  saturated support $Q$ of $\mu$, there is a point
  $x \in Q \cap \supp \mu$ such that $f (x) = \infty$ by
  Corollary~\ref{corl:muunbound}.  Then, $f (x) + g (x)$ is also equal
  to $\infty$, showing that $f+g$ is not $\mu$-bounded either.  In
  particular, $\int_{x \in X}^+ (f (x) + g (x)) d\mu$ and
  $\int_{x \in X}^+ f (x) d\mu + \int_{x \in X}^+ g (x) d\mu$ are both
  equal to $\infty$.

  2. If $f$ is $\mu$-bounded and $a \neq \infty$, then $a \cdot_r f$
  is also $\mu$-bounded: for every measurable compact saturated
  support $Q$ of $\mu$, $f$ and therefore $a \cdot_r f$ is bounded on
  $Q \cap \supp \mu$.  Then the claim follows from
  Lemma~\ref{lemma:int-}, item~2, and the fact that $\cdot_\ell$ and
  $\cdot_r$ both coincide with the ordinary product on $\Rp$.
  
  If $a = \infty$, then by definition
  $\infty \cdot_r \int_{x \in X}^+ f (x) d\mu = \infty$, since
  $\infty$ is absorbing for $\cdot_r$; and
  $\int_{x \in X}^+ (\infty \cdot_r f (x)) d\mu = \int_{x \in X}^+
  \infty d\mu = \infty$.  The latter equality follows from the fact
  that the constant map $\infty$ is not $\mu$-bounded; indeed, for
  every measurable compact saturated support $Q$ of $\mu$,
  $Q \cap \supp \mu$ is non-empty by Lemma~\ref{lemma:Qsupp}, so that
  $\infty$ is not bounded on that set.

  If $f$ is not $\mu$-bounded but $a \in \Rp$, then for every
  measurable compact saturated support $Q$ of $\mu$, there is a point
  $x \in Q \cap \supp \mu$ such that $f (x) = \infty$ by
  Corollary~\ref{corl:muunbound}.  Then $a \cdot_r f (x) = \infty$ as
  well.  This shows that $a \cdot_r f$ is not $\mu$-bounded either.
  It follows that $\int_{x \in X}^+ (a \cdot_r f (x)) d\mu = \infty$,
  while
  $a \cdot_r \int_{x \in X}^+ f (x) d\mu = a \cdot_r \infty = \infty$.

  3. First, the pointwise infimum $f \eqdef \finf_{i \in I} f_i$ of
  upper semicontinuous maps $f_i$'s is upper semicontinuous.  Let us write $i
  \preceq j$ if and only if $f_i \leq f_j$.

  If $f_{i_0}$ is $\mu$-bounded for some $i_0 \in I$, then
  $f_i \leq f_{i_0}$ is also $\mu$-bounded for every $i \preceq i_0$,
  and witnessed by the same measurable compact saturated set $Q$.
  Similarly, $f$ is also $\mu$-bounded, witnessed by $Q$.  We let $r$
  be an upper bound of $f_{i_0}$ on $Q \cap \supp \mu$.  Then, using
  item~5,
  \begin{align*}
    \int_{x \in X}^+ f (x) d\mu
    & = \int_{x \in X} f (x) \One {Q \cap \supp \mu} (x) d\mu \\
    & = r \mu (X) - \int_{x \in X} (r - f (x)) \One {Q \cap \supp \mu} (x) d\mu.
  \end{align*}
  Indeed, the map $(r - f (\_)) \One {Q \cap \supp \mu}$ also takes
  its values in $\Rp$, and the sum of
  $\int_{x \in X} f (x) \One {Q \cap \supp \mu} (x) d\mu$ and of
  $\int_{x \in X} (r - f (x)) \One {Q \cap \supp \mu} (x) d\mu$ is
  equal to
  $\int_{x \in X} r \One {Q \cap \supp \mu} (x) d\mu = r \mu (Q \cap
  \supp \mu) = r \mu (X)$, since $Q \cap \supp \mu$ is a measurable
  support of $\mu$.  Since integration of lower semicontinuous maps
  with respect to a $\tau$-smooth measure is Scott-continuous, as in
  Lemma~\ref{lemma:int-}, item~4, we obtain:
  \begin{align*}
    \int_{x \in X}^+ f (x) d\mu
    & = r \mu (X) - \dsup_{i \preceq i_0} \int_{x \in X} (r - f_i (x))
      \One {Q \cap \supp \mu} (x)  d\mu \\
    & = \finf_{i \preceq i_0} \int_{x \in X} f_i (x) \One {Q \cap \supp \mu}
      (x) d\mu \\
    & = \finf_{i \preceq i_0} \int_{x \in X}^+ f_i (x) d\mu 
    = \finf_{i \in I} \int_{x \in X}^+ f_i (x) d\mu.
  \end{align*}
  If no $f_i$ is $\mu$-bounded, then $f$ cannot be $\mu$-bounded
  either, as we now claim.  If $f$ is $\mu$-bounded, witnessed by $Q$,
  then, let $r \in \Rp$ be such that for every
  $x \in Q \cap \supp \mu$, $f (x) < r$.  Since
  $f = \finf_{i \in I} f_i$, every point $x$ of $Q \cap \supp \mu$ is
  in the open set $f_i^{-1} ([0, r[)$ for some $i \in I$.  The family
  ${(f_i^{-1} ([0, r[))}_{i \in I}$ is then an open cover of
  $Q \cap \supp \mu$.  The intersection of a compact set and of a
  closed set is compact, so $Q \cap \supp \mu$ is compact, and therefore
  ${(f_i^{-1} ([0, r[))}_{i \in I}$ has a finite subcover.  Since
  ${(f_i^{-1} ([0, r[))}_{i \in I}$ is a directed family, we can
  assume that this subcover consists of just one open set
  $f_i^{-1} ([0, r[)$.  But that implies that $f_i$ is bounded on $Q
  \cap \supp \mu$, hence $\mu$-bounded,
  a contradiction.

  Hence we have proved that $f$ is not $\mu$-bounded, so
  $\int_{x \in X}^+ f (x) d\mu = \infty$, which is then vacuously
  equal to $\finf_{i \in I} \int_{x \in X}^+ f_i (x) d\mu$.
  
  4. When $f$ is $\mu$-bounded, witnessed by $Q$,
  $\int_{x \in X}^+ f (x) d\mu$ is equal to
  $\int_{x \in X}^- f (x) d\mu$, hence to the ordinary integral
  $\int_{x \in X} f (x) . \One {Q \cap \supp \mu} (x) d\mu$ by item~5.
  Since $E$ is a measurable support of $\mu$,
  $E \cap Q \cap \supp \mu$ is also a (measurable) support of $\mu$,
  so the latter is also equal to
  $\int_{x \in X} f (x) . \One {E \cap Q \cap \supp \mu} (x) d\mu$.
  Since $g$ is below $f$ on $E \cap \supp \mu$,
  $g \cdot \One {E \cap Q \cap \supp \mu}$ is (bounded and) below
  $f \cdot \One {E \cap Q \cap \supp \mu}$, so
  $\int_{x \in X}^+ f (x) d\mu = \int_{x \in X} f (x) . \One {E \cap Q
    \cap \supp \mu} (x) d\mu$ is larger than or equal to
  $\int_{x \in X} g (x) . \One {E \cap Q \cap \supp \mu} (x) d \mu$,
  and the latter is equal to $\int_{x \in X}^- g (x) d\mu$ by a
  similar argument.
  
  If $f$ is not $\mu$-bounded, then
  $\int_{x \in X}^+ f (x) d\mu = \infty$, and the claim is trivial.
\end{proof}

We let $R \eqdef \Icreal$, and we fix a topological space $X$.  For
every $h \in \Lform^R X$, for every $x \in X$, $h (x)$ is an interval
$[h^- (x), h^+ (x)]$.  The function $h^-$ is lower semicontinuous.
Indeed, for every $r \in \Rp \diff \{0\}$,
${(h^-)}^{-1} (]r, \infty]) = h^{-1} (\uuarrow [r, \infty])$: for
every $[a, b] \in \Icreal$, $[r, \infty] \ll [a, b]$ if and only if
$r < a$.  Symmetrically, the function
$h^+$ is upper semicontinuous.
Our preparatory steps on the lower and upper integrals then allow us
to make sense of the following definition.  The fact that
$\int_{x \in X}^- h^- (x) d\mu \leq \int_{x \in X}^+ h^+ (x) d\mu$ is
by Lemma~\ref{lemma:int+}, item~4.

\begin{definition}
  \label{defn:meas:rval}
  For every $\tau$-smooth measure $\mu$ on a topological space $X$, we
  define $\rval\mu \colon \Lform^{\Icreal} X \to \Icreal$ by
  $\rval\mu (h) \eqdef [\int_{x \in X}^- h^- (x) d\mu, \int_{x \in
    X}^+ h^+ (x) d\mu]$.
\end{definition}


\begin{proposition}
  \label{prop:rval}
  For every non-zero, bounded $\tau$-smooth measure $\mu$ on a
  coherent topological space $X$, $\rval\mu$ is a continuous
  $\Icreal$-valuation.
\end{proposition}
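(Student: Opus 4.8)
The plan is to verify directly the conditions in Definition~\ref{defn:Rval}: that $\rval\mu$ is well-typed (i.e.\ lands in $\Icreal$), additive, homogeneous, and Scott-continuous. Well-typedness is already on record above: for every $h\in\Lform^{\Icreal} X$ one has $h^-\leq h^+$ pointwise (since $h(x)=[h^-(x),h^+(x)]$ is an interval), $h^-$ is lsc and $h^+$ is usc, so $\int_X^- h^-\,d\mu\leq\int_X^+ h^+\,d\mu$ by Lemma~\ref{lemma:int+}, item~4 applied with the measurable support $E\eqdef X$. The remaining three properties all follow from one recipe: split the interval-valued identity about $\rval\mu$ into its left and right endpoints; on the left endpoint apply the relevant item of Lemma~\ref{lemma:int-} to the lower integral of $h^-$, and on the right endpoint apply the relevant item of Lemma~\ref{lemma:int+} to the upper integral of $h^+$; then recombine. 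Throughout I use that $\Lform^{\Icreal} X$ is a d-rag (so $h+g$ and $\mathbf a\times h$ again have lsc left part and usc right part, hence lie in the domain of $\rval\mu$), and the description of $\Icreal$: the order is reverse inclusion, so $[a,b]\leq[c,d]$ iff $a\leq c$ and $b\geq d$, and directed suprema are computed endpointwise, the supremum of a directed family $([a_i,b_i])_{i\in I}$ being $[\sup_i a_i,\inf_i b_i]$.

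For \emph{additivity}, given $h,g\in\Lform^{\Icreal} X$, componentwise addition in $\Icreal$ gives $(h+g)^-=h^-+g^-$ and $(h+g)^+=h^++g^+$; then Lemma~\ref{lemma:int-}, item~1 handles the left endpoint and Lemma~\ref{lemma:int+}, item~1 handles the right endpoint --- this last is exactly where coherence of $X$ enters --- and recombining gives $\rval\mu(h+g)=\rval\mu(h)+\rval\mu(g)$. For \emph{homogeneity}, writing $\mathbf a=[a^-,a^+]$, the definition of $\times$ on $\Icreal$ gives $(\mathbf a\times h)^-=a^-\cdot_\ell h^-$ and $(\mathbf a\times h)^+=a^+\cdot_r h^+$; then $\cdot_\ell$-homogeneity of the lower integral (Lemma~\ref{lemma:int-}, item~2) and $\cdot_r$-homogeneity of the upper integral (Lemma~\ref{lemma:int+}, item~2) give, after recombining, $\rval\mu(\mathbf a\times h)=[a^-,a^+]\times[\int_X^- h^-\,d\mu,\int_X^+ h^+\,d\mu]=\mathbf a\times\rval\mu(h)$.

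For \emph{Scott-continuity}, let $(h_i)_{i\in I}$ be a directed family in $\Lform^{\Icreal} X$ with supremum $h$. Because suprema in $\Icreal$ are taken endpointwise as $[\sup,\inf]$, the left parts $(h_i^-)_{i\in I}$ form a directed family of lsc maps with pointwise supremum $h^-$, while the right parts $(h_i^+)_{i\in I}$ form a filtered family of usc maps with pointwise infimum $h^+$. Then Lemma~\ref{lemma:int-}, item~4 (which needs $\mu$ to be $\tau$-smooth) yields $\int_X^- h^-\,d\mu=\sup_{i\in I}\int_X^- h_i^-\,d\mu$, and Lemma~\ref{lemma:int+}, item~3 (which needs $\mu$ to be non-zero and bounded) yields $\int_X^+ h^+\,d\mu=\inf_{i\in I}\int_X^+ h_i^+\,d\mu$; recombining the endpoints gives $\rval\mu(h)=\sup_{i\in I}\rval\mu(h_i)$ in $\Icreal$. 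Monotonicity, should one want it separately, is the same argument with monotonicity of the two integrals in place of (co)continuity. Together with well-typedness and linearity, this shows $\rval\mu$ is a continuous $\Icreal$-valuation.

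I do not anticipate a genuine obstacle: all of the analytic content has been packaged into Lemmas~\ref{lemma:int-} and~\ref{lemma:int+}, and what remains is assembly. The one point that needs care --- and that explains the hypotheses in the statement --- is the asymmetry of the two endpoints of $\Icreal$. The left endpoint is covariant, involves $\cdot_\ell$ and directed suprema, and behaves well unconditionally; the right endpoint is contravariant, involves $\cdot_r$ and filtered infima, and its good behaviour is precisely what requires coherence of $X$ (for additivity of $\int^+$), boundedness of $\mu$ (for Scott-cocontinuity of $\int^+$), and non-zeroness together with $\tau$-smoothness (for $\int^+$ to be defined at all). Matching each endpoint to the correct item of the correct lemma while keeping the reverse-inclusion order of $\Icreal$ straight is the only place a slip could occur.
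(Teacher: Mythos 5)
Your proposal is correct and follows essentially the same route as the paper: linearity is obtained by matching items~1 and~2 of Lemma~\ref{lemma:int-} (left endpoint) with items~1 and~2 of Lemma~\ref{lemma:int+} (right endpoint), and Scott-continuity by combining Lemma~\ref{lemma:int-}, item~4 for the directed suprema of the lower parts with Lemma~\ref{lemma:int+}, item~3 for the filtered infima of the upper parts. Your extra remarks on well-typedness (via Lemma~\ref{lemma:int+}, item~4) and on where coherence, boundedness and non-zeroness enter are exactly the points the paper settles just before and inside its proof.
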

\begin{proof}
  First, $\rval\mu$ is linear by Lemma~\ref{lemma:int-} (items~1
  and~2) and Lemma~\ref{lemma:int+} (items~1 and~2).  We verify that
  It it Scott-continuous.  Let ${(h_i)}_{i \in I}$ be a directed
  family in $\Lform^{\Icreal} X$, with supremum $h$.  We aim to show
  that $\rval\mu (h) = \dsup_{i \in I} \rval\mu (h_i)$.  On the one
  hand, $h^- = \dsup_{i \in I} h_i^-$, so
  $\int_{x \in X}^- h^- (x) d\mu = \dsup_{i \in I} \int_{x \in X}^-
  h_i^- (x) d\mu$ by Lemma~\ref{lemma:int-}, item~4.  On the other
  hand, $h^+ = \finf_{i \in I} h_i^+$, so
  $\int_{x \in X}^+ h^+ (x) d\mu = \finf_{i \in I} \int_{x \in X}^+
  h_i^+ (x) d\mu$ by Lemma~\ref{lemma:int+}, item~3.
\end{proof}

\begin{remark}
  \label{rem:rval}
  We think of $\rval\mu$ as being really the measure $\mu$, seen as a
  continuous $\Icreal$-valuation.  Note in particular that for every
  bounded continuous map $h \colon X \to \Rp$,
  $\rval\mu ([h, h]) = \left[\int_{x \in X} h (x) d\mu, \int_{x \in X}
    h (x) d\mu\right]$.
\end{remark}

\section{Continuous $R$-valuations and measures III: continuous $\Icreal$-valuations as
  approximations of measures}
\label{sec:repr-meas-iii}

Let us say that $[a, b]$ \emph{approximates} $x$ if and only if
$a \leq x \leq b$, and that a continuous map
$h \in \Lform^{\Icreal} X$ \emph{approximates} a measurable map
$f \colon X \to \creal$ if and only if $h (x)$ approximates $f (x)$ for
every $x \in X$.

We will say that a continuous $\Icreal$-valuation $\nu$
\emph{approximates} a measure $\mu$ on $X$ if and only if, for every
measurable map $f \colon X \to \creal$ and for every
$h \in \Lform^{\Icreal} X$ that approximates $f$, $\nu (h)$
approximates $\int_{x \in X}^- f (x) d\mu$.

\begin{lemma}
  \label{lemma:approx:correct}
  For every non-zero, bounded $\tau$-smooth measure $\mu$ on a
  coherent topological space $X$, $\rval\mu$ approximates $\mu$.
\end{lemma}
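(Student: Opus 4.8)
The goal is to show that for every measurable $f \colon X \to \creal$ and every $h \in \Lform^{\Icreal} X$ that approximates $f$ (i.e. $h^-(x) \leq f(x) \leq h^+(x)$ pointwise), the interval $\rval\mu(h) = [\int_{x\in X}^- h^-(x)\,d\mu, \int_{x\in X}^+ h^+(x)\,d\mu]$ approximates $\int_{x \in X}^- f(x)\,d\mu$; that is,
\[
  \int_{x \in X}^- h^-(x)\,d\mu \;\leq\; \int_{x \in X}^- f(x)\,d\mu \;\leq\; \int_{x \in X}^+ h^+(x)\,d\mu.
\]
The left-hand inequality is the easy half: since $h^- \leq f$ pointwise, hence in particular on $E \cap \supp\mu$ for any measurable support $E$, monotonicity of the lower integral (immediate from its defining formula~(\ref{eq:int-:choquet}), or from additivity plus the trivial monotonicity of Lebesgue integration of $\Rp$-valued functions) gives $\int^- h^-\,d\mu \leq \int^- f\,d\mu$. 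I would state this as a one-line consequence of the Choquet formula.

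The right-hand inequality, $\int_{x\in X}^- f(x)\,d\mu \leq \int_{x \in X}^+ h^+(x)\,d\mu$, is exactly the content of Lemma~\ref{lemma:int+}, item~4, applied with $g \eqdef f$ (measurable) and the upper semicontinuous map $h^+$: since $f \leq h^+$ everywhere, a fortiori $f \leq h^+$ on $E \cap \supp\mu$ for any measurable support $E$ of $\mu$ (one may simply take $E = X$), and item~4 yields precisely $\int_{x\in X}^- f(x)\,d\mu \leq \int_{x\in X}^+ h^+(x)\,d\mu$. Here I would note that $h^+$ is upper semicontinuous, as recorded in the discussion preceding Definition~\ref{defn:meas:rval}, so the hypotheses of Lemma~\ref{lemma:int+}(4) are met, using that $\mu$ is non-zero and $\tau$-smooth (both given). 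Combining the two inequalities is the whole statement.

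There is essentially no obstacle here: the proof is a direct assembly of the lower/upper integral machinery already developed, and the boundedness and coherence hypotheses are not even needed for this particular lemma (they were needed in Proposition~\ref{prop:rval} to know $\rval\mu$ is a continuous $\Icreal$-valuation, which is what makes the phrase ``$\rval\mu$ approximates $\mu$'' meaningful). If I wanted to be careful, the only point deserving a sentence is why $\int_{x\in X}^- h^-(x)\,d\mu \leq \int_{x\in X}^- f(x)\,d\mu$ when $f$ is merely measurable and $h^-$ lower semicontinuous: monotonicity of the lower integral in the integrand holds for all measurable integrands, again by formula~(\ref{eq:int-:choquet}) together with monotonicity of $t \mapsto \mu(g^{-1}(]t,\infty]))$ under pointwise order on $g$. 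So the plan is: (1) invoke monotonicity of $\int^-$ for the lower bound; (2) invoke Lemma~\ref{lemma:int+}(4) with $g = f$, $f = h^+$ for the upper bound; (3) conclude.
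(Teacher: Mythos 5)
Your proposal is correct and follows essentially the same route as the paper: the paper's proof is exactly the two-line argument that $h^- \leq f \leq h^+$ gives $\int^- h^-\,d\mu \leq \int^- f\,d\mu \leq \int^+ h^+\,d\mu$, with the right-hand inequality by Lemma~\ref{lemma:int+}, item~4 (and the left-hand one by monotonicity of the lower integral, which the paper leaves implicit). Your side remark that boundedness and coherence are not needed for this lemma itself is also accurate.
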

\begin{proof}
  We consider any measurable map $f \colon X \to \creal$ and any
  $h \in \Lform^{\Icreal} X$ that approximates $f$.  We write $h (x)$
  as $[h^- (x), h^+ (x)]$ for every $x \in X$, so that
  $h^- \leq f \leq h^+$.  Then
  $\int_{x \in X}^- h^- (x) d\mu \leq \int_{x \in X}^- f (x) d\mu \leq
  \int_{x \in X}^+ h^+ (x) d\mu$, where the last inequality is by
  Lemma~\ref{lemma:int+}, item~4 applied to $f \leq h^+$.
\end{proof}

Our objective is now to show that $\rval\mu$ is the most precise,
namely the largest, continuous $\Icreal$-valuation that approximates
$\mu$, under some reasonable assumptions.  This will notably hold when
the ambient space $X$ is compact Hausdorff and second-countable, for
example $[0, 1]$ with its usual, metric topology. 
More generally, this will hold when $X$ is stably
compact, second-countable, and contains a sufficiently nice support
$K$ of $\mu$.

The value of restricting to second-countable spaces is the following.
\begin{lemma}
  \label{lemma:2ndcount}
  Let $X$ be a topological space, and $\mathcal B$ be a base of its
  topology that is closed under finite unions.
  \begin{enumerate}
  \item For every compact saturated subset $Q$ of $X$, and every open
    neighborhood $U$ of $Q$, there is a $V \in \mathcal B$ such that
    $Q \subseteq V \subseteq U$.
  \item Every compact saturated subset of $X$ is equal to the
    intersection of the sets in $\mathcal B$ that contain it.
  \end{enumerate}
  In particular, if $X$ is second-countable, then every compact
  saturated subset of $X$ is measurable.
\end{lemma}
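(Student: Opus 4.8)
The plan is to establish the two numbered items in turn and then read off the ``in particular'' clause as an immediate corollary. For item~1 I would run the standard ``compactness plus base'' argument: given a compact saturated $Q$ and an open neighbourhood $U \supseteq Q$, for each $x \in Q$ choose a basic open $B_x \in \mathcal B$ with $x \in B_x \subseteq U$, which is possible because $\mathcal B$ is a base. The family ${(B_x)}_{x \in Q}$ is an open cover of $Q$; extract a finite subcover $B_{x_1}, \dots, B_{x_n}$ and put $V \eqdef B_{x_1} \cup \cdots \cup B_{x_n}$. Then $V \in \mathcal B$ since $\mathcal B$ is closed under finite unions, and $Q \subseteq V \subseteq U$ by construction. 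Note that saturation plays no role here, only compactness; the one edge case is $Q = \emptyset$, which I would accommodate by reading ``closed under finite unions'' as including the nullary union, so that $\emptyset \in \mathcal B$.

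For item~2 the key ingredient is the defining property of saturated sets: a subset $Q$ of $X$ is saturated exactly when it is the intersection of its open neighbourhoods, $Q = \bigcap\{U \in \Open X \mid Q \subseteq U\}$. Writing $\mathcal V \eqdef \{V \in \mathcal B \mid Q \subseteq V\}$, the inclusion $Q \subseteq \bigcap \mathcal V$ is trivial. For the converse, if $y \notin Q$ then saturation gives an open $U$ with $Q \subseteq U$ and $y \notin U$; item~1 then yields some $V \in \mathcal B$ with $Q \subseteq V \subseteq U$, so $V \in \mathcal V$ yet $y \notin V$, whence $y \notin \bigcap \mathcal V$. This proves $\bigcap \mathcal V \subseteq Q$, hence equality.

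Finally, for the ``in particular'' clause, assume $X$ is second-countable with a countable base $\mathcal B_0$, and let $\mathcal B$ be the family of all finite unions of members of $\mathcal B_0$: this is again countable, is a base, and is closed under finite unions, so items~1 and~2 apply to it. By item~2, any compact saturated $Q \subseteq X$ equals $\bigcap \mathcal V$ for some $\mathcal V \subseteq \mathcal B$, hence for some \emph{countable} family of open sets; thus $Q$ is a $G_\delta$ set, so Borel, so measurable.

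I do not expect a genuine obstacle in any of these steps; the argument is essentially routine once two facts are on the table, namely that saturated sets coincide with intersections of their open neighbourhoods, and that passing to finite unions of a countable base keeps it countable. If anything is delicate it is just bookkeeping around the empty set and being explicit about the saturation characterisation, neither of which affects the conclusion about measurability.
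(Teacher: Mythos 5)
Your proof is correct and follows essentially the same route as the paper: compactness plus closure of $\mathcal B$ under finite unions for item~1, the characterization of saturated sets as intersections of their open neighbourhoods combined with item~1 for item~2, and a countable base closed under finite unions to get the $G_\delta$ (hence Borel) conclusion. Your explicit handling of the empty set and of replacing a countable base by its finite unions are minor refinements the paper leaves implicit.
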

\begin{proof}
  (i) We write $U$ as the union of the sets $V \in \mathcal B$ that are
  included in $V$.  This forms an open cover of $Q$, from which we can
  extract a finite subcover.  Since $\mathcal B$ is closed under
  finite unions, there is a $V \in \mathcal B$ that contains $Q$ and
  is included in $U$.

  (ii)  Let $Q$ be compact saturated in $X$.  Since $Q$ is saturated,
  $Q$ is the intersection of its open neighborhoods $U$.  Then claim~2
  follows from 1.

  When $\mathcal B$ is countable, $Q$ is then a countable intersection
  of open sets, so $Q$ is measurable.
\end{proof}

A space $X$ is \emph{stably compact} if and only if it is sober,
locally compact, compact, and coherent.  We let $X^\patch$ denote $X$
with its \emph{patch topology}, which is the smallest topology that
contains the original open subsets of $X$ and the complements of
compact saturated subsets of $X$.  When $X$ is stably compact, and
$\leq$ is its specialization ordering, $(X^\patch, \leq)$ is a
\emph{compact pospace}, meaning that $X^\patch$ is compact Hausdorff,
and that the graph of $\leq$ is closed in $X^\patch \times X^\patch$.
We say that a subset of $X$ is \emph{patch-open} if it is open in
$X^\patch$.  Similarly, we use the terms \emph{patch-closed},
\emph{patch-compact}.  If $X$ is stably compact, then patch-closed and
patch-compact are synonymous.  We should add that the original open
subsets of $X$ can be recovered as those patch-open subsets that are
upwards-closed with respect to $\leq$.

\begin{example}
  \label{exa:IR:scomp}
  $\Icreal$ is stably compact in its Scott topology.
  Indeed, it is a continuous dcpo in which any pair of elements
  $[a, b]$ and $[c, d]$ with an upper bound (namely, such that
  $[a, b] \cap [c, d] \neq \emptyset$, or equivalently
  $\max (a, c) \leq \min (b, d)$) has a least upper bound (which is
  $[\max (a, c), \min (b, d)]$).  That kind of continuous dcpo is
  called a \emph{bc-domain}, and every bc-domain is stably compact
  \cite[Fact~9.1.6]{goubault13a}.

  Reasoning similarly, the larger dcpo $\IRbb$ of all closed intervals
  in $\real \cup \{-\infty, \infty\}$, ordered by reverse inclusion,
  is also a bc-domain, hence is also stably compact.  (To make it
  clear, note that $\IRbb$ not only contains the usual intervals
  $[a, b]$ with $a, b \in \real$, $a \leq b$, but also $[-\infty, b]$,
  $[a, \infty]$ with $a, b \in \real$; finally, it has a least element
  $[-\infty, \infty]$.)

  Both bc-domains are second-countable as well.  Indeed, as continuous
  dcpos, they have a basis $\mathcal B$ of intervals with rational
  endpoints, and then the set of Scott-open sets $\uuarrow b$, $b \in
  \mathcal B$, forms a countable base of the Scott topology.
\end{example}

Patch-compact subsets $K$ of stably compact subspaces $X$ enjoy many
nice properties.  For example, their downward closure $\dc K$ in $X$
is closed \cite[Exercise~9.1.43]{goubault13a}.  In fact, we have the
following, where $K$ is \emph{order-convex} if and only if for all $x,
y, z$ such that $y \leq x \leq z$, if $y, z \in K$ then $x \in K$.
\begin{lemma}
  \label{lemma:dcK}
  For every compact, order-convex subset $K$ of a stably compact space
  $X$, $K$ is patch-compact if and only if $\dc K$ is closed.  In that
  case, $\dc K$ is the closure $cl (K)$ of $K$ in $X$, and $K = \upc K
  \cap \dc K$.
\end{lemma}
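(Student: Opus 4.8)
The plan is to establish, in order: (a) the purely order-theoretic identity $K = \upc K \cap \dc K$; (b) the equivalence ``$K$ patch-compact $\iff$ $\dc K$ closed''; and (c) the equality $\dc K = cl(K)$ when these hold. Everything will be reduced to the dictionary between the original topology of the stably compact space $X$ and its patch topology: $X^\patch$ is compact Hausdorff; a subset of $X$ is patch-compact iff it is patch-closed; and, dualising the recalled fact that the original open sets are exactly the upwards-closed patch-open sets, the original closed sets are exactly the downwards-closed patch-closed sets.

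First I would dispatch (a), which uses only order-convexity of $K$. The inclusion $K \subseteq \upc K \cap \dc K$ is trivial, and conversely if $x \in \upc K \cap \dc K$ then $y \le x \le z$ for some $y, z \in K$, so $x \in K$ by order-convexity. This already proves the last displayed equality of the lemma, irrespective of which of the two equivalent conditions holds.

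For (b), the forward implication ``$K$ patch-compact $\Rightarrow$ $\dc K$ closed in $X$'' is \cite[Exercise~9.1.43]{goubault13a}, as recalled just before the lemma; alternatively I would reprove it by noting that $K$ is patch-closed, so the set $\{(x,y) \mid x \le y \text{ and } y \in K\}$ --- namely the graph of $\le$ intersected with $X^\patch \times K$ --- is patch-closed, hence so is its image $\dc K$ under the (continuous) first projection, and, being downwards-closed, $\dc K$ is then closed in $X$. For the converse, assuming $\dc K$ closed, I would exhibit $K$ as an intersection of two patch-closed sets: $\dc K$ is patch-closed since it is closed in $X$, and $\upc K$ is patch-closed because the saturation of the compact set $K$ is compact saturated (open sets being upwards-closed, a finite subcover of $K$ already covers $\upc K$), and complements of compact saturated sets are patch-open by definition; by step (a), $K = \upc K \cap \dc K$ is thus patch-closed, hence a patch-closed subset of the compact space $X^\patch$, hence patch-compact.

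Finally, for (c): one always has $\dc K \subseteq cl(K)$ because $cl(K)$ is a closed, hence downwards-closed, subset of $X$ containing $K$; and when $\dc K$ is closed the reverse inclusion $cl(K) \subseteq \dc K$ is immediate, since $\dc K$ then contains $K$ and is closed. I do not expect a real obstacle here --- the proof is an assembly of standard facts about stably compact spaces and compact pospaces. The only points demanding care are the passage from ``patch-closed'' to ``closed in $X$'' in the forward direction of (b), which is precisely where one invokes the characterisation of the original closed sets as the downwards-closed patch-closed ones, and noticing that order-convexity is exactly what makes $K = \upc K \cap \dc K$ hold, without which the converse of (b) would break down.
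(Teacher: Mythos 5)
Your proof is correct and follows essentially the same route as the paper: the heart of the matter is the order-convexity identity $K = \upc K \cap \dc K$, and the converse direction writes $K$ as the intersection of the patch-closed sets $\upc K$ and $\dc K$ inside the compact Hausdorff space $X^\patch$. The paper simply invokes \cite[Exercise~9.1.43]{goubault13a} for the forward direction and leaves the equality $\dc K = cl(K)$ implicit, so your self-contained re-proof of the former (via the closed graph of $\leq$ and the projection) and your explicit argument for the latter are additional detail rather than a different approach.
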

\begin{proof}
  We only have to show that if $K$ is compact and order-convex and if
  $\dc K$ is closed, then it is patch-compact.  This will be a
  consequence of the last equality $K = \upc K \cap \dc K$, since
  $\upc K$ is compact saturated, and we have assumed that $\dc K$ is
  closed, hence both are patch-closed; then $K$ is patch-closed, too,
  hence patch-compact.

  The inclusion $K \subseteq \upc K \cap \dc K$ is clear.  Conversely,
  every $x \in \upc K \cap \dc K$ is such that $y \leq x \leq z$ for
  some $y, z \in K$, so order-convexity implies $y=z$, and therefore
  also $x=y=z$.  In particular, $x$ is in $K$.
\end{proof}

We will say that a subset $K$ of a space $X$ is Hausdorff if and only
if it is Hausdorff as a subspace, namely with the subspace topology
inherited from $X$.  Since the specialization ordering of a Hausdorff
space is equality, every Hausdorff subset is trivially order-convex.

\begin{example}
  \label{exa:I:IR:embed}
  Let $X \eqdef \IRbb$.  Then the unit interval $K \eqdef [0, 1]$ embeds
  into $X$, provided that we equate every point $x \in [0, 1]$ with
  the interval $[x, x]$ in $\IRbb$.  It is Hausdorff, hence
  order-convex.  Its downward closure $\dc K$ in $X$ is the set of all
  intervals $[a, b]$ such that $[a, b] \cap [0, 1] \neq \emptyset$, or
  equivalently such that $\max (a, 0) \leq \min (b, 1)$, or
  equivalently $a \leq 1$ and $b \geq 0$.  Then $\dc K$ is closed: for
  every directed family ${([a_i, b_i])}_{i \in I}$ in $\dc K$, its
  supremum $[a, b]$ is such that $a = \dsup_{i \in I} a_i \leq 1$ and
  $b = \finf_{i \in I} b_i \geq 0$.  By Lemma~\ref{lemma:dcK},
  $[0, 1]$ is patch-compact in $\IRbb$.
\end{example}

When $K$ is patch-compact in a stably compact
space $X$, we have the serendipitous property that $K$, with the
subspace topology, is stably compact, and that the patch topology on
$K$ is the subspace topology inherited from $X^\patch$
\cite[Proposition~9.3.4]{goubault13a}; also, the specialization
ordering on $K$ is the restriction of that on $X$.

The previous remark, together with the fact that $K^\patch = K$ if $K$
is compact and Hausdorff (since every compact set is already closed in
$K$), entails the following.
\begin{lemma}
  \label{lemma:Kpatch}
  Let $X$ be a stably compact space, and $K$ be a Hausdorff,
  patch-compact subset of $X$.  Then $K$ has both the subspace
  topology inherited from $X$, and the subspace topology inherited
  from $X^\patch$.  \qed
\end{lemma}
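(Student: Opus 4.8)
The plan is to show that the two subspace topologies on $K$ — the one inherited from $X$ and the one inherited from $X^\patch$ — coincide, by identifying both of them with the patch topology of the space $K$ equipped with its subspace topology from $X$.

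First I would invoke the structural facts recalled immediately before the statement. Since $K$ is patch-compact in the stably compact space $X$, by \cite[Proposition~9.3.4]{goubault13a} the subspace topology on $K$ inherited from $X$ turns $K$ into a stably compact space; moreover its specialization ordering is the restriction of that of $X$, and the subspace topology on $K$ inherited from $X^\patch$ is exactly the patch topology $K^\patch$ of this stably compact space $K$. Hence it suffices to prove that $K^\patch = K$, i.e.\ that the patch topology of $K$ coincides with its original (subspace-from-$X$) topology.

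For this I would use that $K$ is Hausdorff. In a Hausdorff space the specialization preorder is equality, so every subset of $K$ is saturated; and every compact subset of a Hausdorff space is closed. Therefore the compact saturated subsets of $K$ are precisely the compact subsets of $K$, and each of them is already closed, so its complement is already open in $K$. Since the patch topology of $K$ is by definition the smallest topology containing the open subsets of $K$ together with the complements of its compact saturated subsets, and these complements contribute nothing new, we conclude $K^\patch = K$.

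Combining the two steps, the subspace topology on $K$ inherited from $X^\patch$ equals $K^\patch$, which equals the original topology of $K$, i.e.\ the subspace topology inherited from $X$. There is essentially no hard step here; the only point requiring care is the interplay between ``patch topology of a subspace'' and ``subspace of the patch topology'', which is exactly what \cite[Proposition~9.3.4]{goubault13a} settles, together with the elementary observation that Hausdorffness collapses ``compact saturated'' to ``closed''.
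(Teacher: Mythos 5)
Your proof is correct and follows essentially the same route as the paper: invoke \cite[Proposition~9.3.4]{goubault13a} to identify the subspace topology from $X^\patch$ with the patch topology of the stably compact space $K$, then use Hausdorffness (compact saturated subsets of $K$ are already closed) to conclude $K^\patch = K$. Nothing to add.
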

We also note that every Hausdorff subset is order-convex, since the
specialization ordering on any Hausdorff space is equality.

\begin{proposition}
  \label{prop:approx}
  Let $K$ be a Hausdorff, patch-compact subset of a stably compact,
  second-countable space $X$.
  Let $\mu$ be a non-zero measure on $X$ supported on $K$, and $\nu$
  be a continuous $\Icreal$-valuation on $X$.  If $\nu$ approximates
  $\mu$, then $\nu \leq \rval\mu$.
\end{proposition}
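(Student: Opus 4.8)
\noindent\emph{Proof idea.}
The plan is to unfold $\nu \le \rval\mu$ pointwise. Since $\Icreal$ is ordered by reverse inclusion and $\nu(h) = [\nu^-(h),\nu^+(h)]$, the inequality $\nu \le \rval\mu$ is equivalent to: for every $h \in \Lform^{\Icreal} X$,
\[
  \nu^-(h) \le \int_{x\in X}^- h^-(x)\, d\mu
  \qquad\text{and}\qquad
  \nu^+(h) \ge \int_{x\in X}^+ h^+(x)\, d\mu .
\]
Now $h$ approximates both the measurable map $h^-$ and the measurable map $h^+$ (trivially $h^- \le h^- \le h^+$ and $h^- \le h^+ \le h^+$), so the assumption that $\nu$ approximates $\mu$ gives directly $\nu^-(h) \le \int^- h^-\, d\mu \le \nu^+(h)$ and $\nu^-(h) \le \int^- h^+\, d\mu \le \nu^+(h)$. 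This settles the first inequality, and also the second whenever $h^+$ is $\mu$-bounded, since then $\int^+ h^+\, d\mu = \int^- h^+\, d\mu$ by the definition of the upper integral.

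So everything reduces to proving $\nu^+(h) = \infty$ when $h^+$ is $\mu$-unbounded. First I would record the relevant geometry of the support. Since $X$ is second-countable, $\upc K$ is a measurable (Lemma~\ref{lemma:2ndcount}) compact saturated set, and it is a support of $\mu$; combining coherence, the order-convexity of the Hausdorff set $K$, and Lemma~\ref{lemma:dcK}, one checks that $S := K \cap \supp\mu = \upc K \cap \supp\mu$, that $\supp\mu = \dc S$, that $S$ is Borel in $X$, and that $S$ with its subspace topology is a compact metrizable space on which $\mu$ restricts to a measure of full support. Applying Corollary~\ref{corl:muunbound} to $Q = \upc K$, $\mu$-unboundedness of $h^+$ produces a point $x_1 \in S$ with $h^+(x_1) = \infty$.

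Next I would construct an approximating chain below $h$. The crucial observation is that $h^+$, being upper semicontinuous, is antitone for the specialization order, so the closed set $\{h^+ = \infty\}$ is downward closed. Fixing a metric on $S$, let $N_i$ be the closed $1/i$-neighborhood in $S$ of $\{h^+ = \infty\} \cap S$, and set $D_i := \dc N_i$. By Lemma~\ref{lemma:dcK} each $D_i$ is closed in $X$; each $D_i$ contains a nonempty relatively open ball around $x_1$ in $S$, hence $\mu(D_i) > 0$; the $D_i$ form a decreasing sequence; and, using that the graph of $\le$ is patch-closed and that $\{h^+=\infty\}$ is downward closed, $\bigcap_i D_i \subseteq \{h^+=\infty\}$. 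Put $h_i := [h^-,\, \max(h^+, \infty\cdot\mathbf 1_{D_i})]$; since $D_i$ is closed this lies in $\Lform^{\Icreal} X$, one has $h_i \le h$, and $\sup_i h_i = h$ because $\finf_i \max(h^+, \infty\cdot\mathbf 1_{D_i}) = h^+$. For each $i$, $h_i$ approximates the measurable map $\max(h^-, \infty\cdot\mathbf 1_{D_i})$, so by hypothesis $\nu^+(h_i) \ge \int^- \max(h^-, \infty\cdot\mathbf 1_{D_i})\, d\mu \ge \int^- \infty\cdot\mathbf 1_{D_i}\, d\mu = \dsup_{r\in\Rp} r\,\mu(D_i) = \infty$. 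Scott-continuity of $\nu$ then yields $\nu^+(h) = \finf_i \nu^+(h_i) = \infty$, completing the argument.

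The main obstacle is the unbounded case and, inside it, producing the sets $D_i$: they must be closed in $X$ (so the $h_i$ are genuine continuous $\Icreal$-valued maps), of positive $\mu$-measure (so the lower integrals blow up), and nested shrinking into $\{h^+=\infty\}$ (so $h$ is their directed supremum). This is where all three standing hypotheses enter: second-countability makes $S$ metrizable and compact saturated sets measurable, while stable compactness of $X$ together with $K$ being Hausdorff and patch-compact ensures that downward closures of compact order-convex subsets are closed, so that $\dc N_i$ may serve as $D_i$.
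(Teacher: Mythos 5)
Your proof is correct, and it follows the paper's overall skeleton: reduce $\nu \le \rval\mu$ to the two endpoint inequalities, dispatch the lower endpoint and the $\mu$-bounded case directly from the definition of approximation (your direct use of $f := h^-$ and $f := h^+$ is a mild simplification of the paper's truncation of $h^-$ at $r$ followed by a supremum), and in the $\mu$-unbounded case force $\nu^+(h)=\infty$ by padding the upper part of $h$ with $\infty$ on a shrinking family of closed sets of positive measure and invoking Scott-continuity of $\nu$. Where you genuinely diverge is in how those closed sets are built. The paper picks a single point $x_0 \in \upc K \cap \supp\mu$ with $h^+(x_0)=\infty$, uses local compactness of the compact Hausdorff space $K$ to get a filtered base of compact neighborhoods $K_i$ of $x_0$, takes their closures $C_i$ in $X$, and proves $\fcap_i C_i = \dc x_0$ (via patch-closedness of $\upc x$); the padded maps are $\infty$ on $C_i$. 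You instead metrize $S = K \cap \supp\mu$ (legitimate: it is compact Hausdorff and second-countable, hence metrizable), fatten the whole trace $\{h^+=\infty\}\cap S$ by closed $1/i$-neighborhoods $N_i$, and use downward closures $D_i=\dc N_i$, with closedness coming from patch-compactness of $N_i$ and the containment $\bigcap_i D_i \subseteq \{h^+=\infty\}$ from compactness of $S$ plus the closed graph of $\le$ in $X^\patch\times X^\patch$. Your route trades the paper's identification $\fcap_i C_i = \dc x_0$ for a metrization step and a limit argument that the paper never needs; the paper's version stays purely order/patch-theoretic, while yours arguably makes the measure-theoretic positivity step ($\mu(D_i)>0$ via full support of $\mu$ on $S$) more transparent. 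Both rest on the same structural inputs (Lemma~\ref{lemma:2ndcount}, Lemma~\ref{lemma:dcK} and patch-compactness, Corollary~\ref{corl:muunbound}, Lemma~\ref{lemma:int+}), so I record your argument as a valid variant rather than a gap; the only places where you compress details worth spelling out are the measurability/support bookkeeping behind ``$\mu$ has full support on $S$'' and the subsequence argument behind $\bigcap_i D_i \subseteq \{h^+=\infty\}$.
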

\begin{proof}
  We first note that, since $X$ is second-countable, every measure on
  $X$ is $\tau$-smooth, in particular $\mu$.

  Let $h$ be an arbitrary continuous map in $\Lform^{\Icreal} X$, let
  us write $h (x)$ as $[h^- (x), h^+ (x)]$ for every $x \in X$ (for
  short, $h = [h^-, h^+]$), and $\nu (h)$ as $[\nu^- (h), \nu^+ (h)]$.
  We must show that $\nu^- (h) \leq \int_{x \in X}^- h^- (x) d\mu$,
  and that $\int_{x \in X}^+ h^+ (x) d\mu \leq \nu^+ (h)$.

  For the first claim, we note that $h (x)$ is the supremum of the
  chain of maps $[\min (h^-, r), h^+]$, $r \in \Rp$.  For each
  $r \in \Rp$, $[\min (h^-, r), h^+]$ approximates the bounded lower
  semicontinuous (hence measurable) map $\min (h^-, r)$.  Since it is
  bounded,
  $\int_{x \in X}^- \min (h^- (x), r) d\mu = \int_{x \in X} \min (h^-
  (x), r) d\mu$.  By assumption, $\nu ([\min (h^-, r), h^+])$
  approximates $\int_{x \in X} \min (h^- (x), r) d\mu$.  In
  particular,
  $\nu^- ([\min (h^-, r), h^+]) \leq \int_{x \in X} \min (h^- (x), r)
  d\mu$.  By taking suprema as $r$ grows to infinity, and using the
  Scott-continuity of $\nu$, hence of $\nu^-$,
  $\nu^- (h) \leq \int_{x \in X}^- h^- (x) d\mu$.

  For the second claim, we distinguish two cases.  If $h^+$ is
  $\mu$-bounded, then since $h$ approximates $h^+$, $\nu (h)$
  approximates $\int_{x \in X}^- h^+ (x) d\mu$, which is equal to
  $\int_{x \in X}^+ h^+ (x) d\mu$ by Lemma~\ref{lemma:int+}, item~5.
  In particular, $\int_{x \in X}^+ h^+ (x) d\mu \leq \nu^+ (h)$.

  The only case where we have to work a bit is the final case, when
  $h^+$ is $\mu$-unbounded.  We let $Q \eqdef \upc K$.  This is a
  compact saturated subset of $X$, and since $X$ is second-countable,
  $Q$ is measurable by Lemma~\ref{lemma:2ndcount}.  Moreover, $Q$ is a
  support of $\mu$, since $Q$ contains $K$, which is already a support
  of $\mu$.

  $Q \cap \supp \mu$ is then a measurable, compact support of $\mu$.
  We claim that $\supp \mu$ is included in the closure $cl (K)$ of $K$
  in $X$.  Equivalently, we claim that every open set $U$ that
  intersects $\supp \mu$ also intersects $K$.  Since $U$ intersects
  $\supp \mu$, by definition of $\supp \mu$, we have $\mu (U) > 0$.
  If it did not intersect $K$, then $U$ and the empty set would have
  the same intersection with $K$, and that would imply $\mu (U) = \mu
  (\emptyset) = 0$, which is impossible.

  Since $\supp \mu \subseteq cl (K)$, we obtain that
  $Q \cap \supp \mu$ is included in $\upc K \cap cl (K)$.  But
  $cl (K) = \dc K$ and $K = \upc K \cap \dc K$ by
  Lemma~\ref{lemma:dcK}.  (Recall that $K$ is order-convex since
  Hausdorff.)  Therefore, $Q \cap \supp \mu$ is included in $K$.  By
  Corollary~\ref{corl:muunbound}, there is a point $x_0$ in
  $Q \cap \supp \mu$, hence in $K$, such that $h^+ (x_0) = \infty$.

  Since $K$ is compact Hausdorff hence locally compact, $x_0$ has a
  base of compact neighborhoods ${(K_i)}_{i \in I}$ in $K$.  Each
  $K_i$ is compact hence closed in $K$ (since $K$ is Hausdorff).  Let
  $C_i$ be the closure of $K_i$ in $X$.  Then $C_i \cap K = K_i$: the
  inclusion $K_i \subseteq C_i \cap K$ is clear; conversely, since
  $K_i$ is closed in $K$, we can write it as $C \cap K$ for some
  closed subset $C$ of $X$, and then $C \supseteq C_i$, so
  $K_i = C \cap K \supseteq C_i \cap K$.

  The family ${(K_i)}_{i \in I}$ is filtered: for all $i, j \in I$,
  $K_i \cap K_j$ is a compact neighborhood of $x_0$, using the fact
  that the intersection of two compact sets in a Hausdorff space is
  compact; hence $K_i \cap K_j$ contains some $K_k$, $k \in I$.  It
  follows that ${(C_i)}_{i \in I}$ is a filtered family of closed
  subsets of $X$.

  Let $C \eqdef \fcap_{i \in I} C_i$.  This is a closed subset of $X$
  containing $x_0$.  We claim that $C$ is exactly the downward closure
  of $x_0$ in $X$.  It only remains to show that
  $C \subseteq \dc x_0$.  Let us assume the contrary: for some
  $x \in \fcap_{i \in I} \dc K_i$, $x \not\leq x_0$.  Then $x_0$ is in
  $(X \diff \upc x) \cap K$, which is open in $K$.  Indeed, $\upc x$
  is compact saturated hence patch-closed in $X$, so $X \diff \upc x$
  is open in $X^\patch$, and therefore $(X \diff \upc x) \cap K$ is
  open in $K$, using Lemma~\ref{lemma:Kpatch}.  Since
  ${(K_i)}_{i \in I}$ is a base of neighborhoods of $x_0$ in $K$, some
  $K_i$ is included in $(X \diff \upc x) \cap K$.  This is impossible,
  since $x \in \dc K_i$.


  For every $i \in I$, let $h_i$ be the function that maps every
  $x \in C_i$ to $\infty$, and all other points to $h^+ (x)$.  This is
  an upper semicontinuous map, since
  $h_i^{-1} ([0, r[) = {h^+}^{-1} ([0, r[) \diff C_i$ for every
  $r \in \Rp$.  The family ${(h_i)}_{i \in I}$ is filtered, since
  ${(C_i)}_{i \in I}$ is a filtered family of sets.  Moreover, for
  every $x \in X$, $\finf_{i \in I} h_i (x) = h^+ (x)$.  If
  $x \leq x_0$, we argue as follows.  First,
  $x \in \dc x_0 = C = \fcap_{i \in I} C_i$, so that
  $\finf_{i \in I} h_i (x) = \finf_{i \in I} \infty = \infty$, while
  $h^+ (x) \geq h^+ (x_0) = \infty$, since upper semicontinuous maps
  are antitonic.  If $x \not\leq x_0$, then $x$ is not in
  $C = \fcap_{i \in I} C_i$, so $x$ is not in $C_i$ for some
  $i \in I$, and therefore $h_i (x) = h^+ (x)$.  This implies that
  $\finf_{i \in I} h_i (x) \leq h^+ (x)$, while the reverse inequality
  is obvious.
  
  For every $i \in I$, $[h^-, h_i]$ approximates $h_i$, so
  $\nu ([h^-, h_i])$ approximates $\int_{x \in X}^- h_i (x) d\mu$.  In
  particular, $\int_{x \in X}^- h_i (x) d\mu \leq \nu^+ ([h^-, h_i])$.
  However, we claim that the left-hand side is equal to $\infty$, so
  that $\nu^+ ([h^-, h_i]) = \infty$.  Indeed, $h_i (x)$ is equal to
  $\infty$ on $C_i$, hence on $K_i \subseteq C_i$, hence on the even
  smaller set $U_i \cap K$, so
  $\int_{x \in X}^- h_i (x) d\mu \geq \infty.\mu (U_i \cap K)$.  (The
  latter makes sense because $K = \upc K \cap \dc K$ by
  Lemma~\ref{lemma:dcK}, $\upc K$ is compact saturated hence
  measurable by Lemma~\ref{lemma:2ndcount}, $U_i$ is open and $\dc K$
  are closed, hence are measurable.)  Since $K$ is a support of $\mu$,
  $\mu (U_i \cap K) = \mu (U_i)$.  We have an open set $U_i$ that
  intersects $\supp \mu$ (at $x_0$): by definition of the support,
  $\mu (U_i) > 0$.  (Namely, if we had $\mu (U_i)=0$, then $U_i$ would
  be included in the largest open subset with zero $\mu$-measure,
  which is the complement of $\supp \mu$ by definition.)  It follows
  that
  $\int_{x \in X}^- h_i (x) d\mu \geq \infty.\mu (K_i) \geq \infty
  . \mu (U_i) = \infty$.

  Hence we have shown that $\nu^+ ([h^-, h_i]) = \infty$ for every
  $i \in I$.  Taking suprema, and recalling that
  $h^+ = \finf_{i \in I} h_i$, hence that
  $\dsup_{i \in I} [h^-, h_i] = [h^-, h^+] = h$, we obtain that
  $\nu^+ (h) = \infty$.  The inequality
  $\int_{x \in X}^+ h^+ (x) d\mu \leq \nu^+ (h)$ then follows
  trivially.
\end{proof}

\begin{theorem}
  \label{thm:rval}
  Let $K$ be a Hausdorff, patch-compact subset of a stably compact,
  second-countable space $X$, and $\mu$ be a non-zero, bounded measure
  supported on $K$.  Then $\mu$ is $\tau$-smooth and $\rval\mu$ is the
  largest (``most precise'') continuous $\Icreal$-valuation that
  approximates $\mu$.
\end{theorem}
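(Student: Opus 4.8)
The plan is to derive the theorem by assembling the three main results of this section, once one checks that the stated hypotheses propagate to the places where they are needed. First I would dispense with $\tau$-smoothness: since $X$ is second-countable it is hereditarily Lindel\"of, and, as recalled in Section~\ref{sec:cont-r-valu-1}, every measure on a hereditarily Lindel\"of space is $\tau$-smooth; in particular so is $\mu$. I would also record that a stably compact space is coherent by definition, so $X$ is coherent.

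Next I would invoke Proposition~\ref{prop:rval}: its hypotheses --- a non-zero, bounded, $\tau$-smooth measure on a coherent space --- are now all met, so $\rval\mu$ is a well-defined continuous $\Icreal$-valuation. Correctness, namely that $\rval\mu$ approximates $\mu$, is then exactly Lemma~\ref{lemma:approx:correct}, whose hypotheses coincide with those just verified. Maximality is Proposition~\ref{prop:approx}: if $\nu$ is any continuous $\Icreal$-valuation on $X$ that approximates $\mu$, then since $K$ is a Hausdorff, patch-compact subset of the stably compact, second-countable space $X$ and $\mu$ is non-zero and supported on $K$, we obtain $\nu \leq \rval\mu$. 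Putting the last two facts together, $\rval\mu$ approximates $\mu$ and dominates every continuous $\Icreal$-valuation that does, so it is the largest such, which is the assertion.

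The substantive content has already been discharged in Propositions~\ref{prop:rval} and~\ref{prop:approx} and in Lemma~\ref{lemma:approx:correct}, so the remaining work is essentially bookkeeping; the one point I would be careful about --- and hence the main, if modest, obstacle --- is tracking exactly which hypothesis each cited result consumes: boundedness of $\mu$ is needed for Proposition~\ref{prop:rval} and Lemma~\ref{lemma:approx:correct} (through the Scott-cocontinuity clause of Lemma~\ref{lemma:int+}) but not for Proposition~\ref{prop:approx}, and the phrase ``supported on $K$'' must be read in the sense of Section~\ref{sec:meas-as-cont}, which is precisely the form that Proposition~\ref{prop:approx} requires.
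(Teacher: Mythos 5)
Your proposal is correct and follows essentially the same route as the paper's own proof: second-countability yields $\tau$-smoothness, and then Proposition~\ref{prop:rval}, Lemma~\ref{lemma:approx:correct} and Proposition~\ref{prop:approx} are combined exactly as you describe. The extra bookkeeping you mention (coherence from stable compactness, which result uses boundedness) is accurate and does not change the argument.
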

\begin{proof}
  First, $\mu$ is $\tau$-smooth because $X$ is second-countable.
  Second, $\rval\mu$ is a continuous $\Icreal$-valuation by
  Proposition~\ref{prop:rval}, it approximates $\mu$ by
  Lemma~\ref{lemma:approx:correct}, and it is largest by
  Proposition~\ref{prop:approx}.
\end{proof}


\section{The Lebesgue $R$-valuation on the unit interval}
\label{sec:lebesgue-r-valuation}

Let $\lambda$ be Lebesgue measure on $[0, 1]$.  By
Theorem~\ref{thm:rval} with $X \eqdef K \eqdef [0, 1]$, $\rval\lambda$
is the most precise continuous $\Icreal$-valuation that approximates
$\lambda$.  However, $\rval\lambda$ is not in
$\Val^{\Icreal}_m ([0, 1])$, by the following argument, whose details
we leave to the reader.  If $\rval\lambda$ were minimal, then its view
from the left would be in $\Val^{\creal}_m ([0, 1])$, so $\lambda$
would be a minimal valuation.  Any minimal valuation is
point-continuous, in the sense of Heckmann \cite{heckmann95}, because
every simple valuation is point-continuous, and point-continuous
valuations are closed under directed suprema.  However, a valuation
$\nu$ is point-continuous if and only if for every open set $U$, for
every real number $r$ such that $0 \leq r < \nu (U)$, there is a
finite subset $A$ of $U$ such that $\nu (V)$ for every open
neighborhood $V$ of $A$; and $\lambda$ fails to have this property,
since every finite subset has open neighborhoods of arbitrarily small
$\lambda$-measure.

Instead, we consider the image measure $j [\lambda]$ on
$\IRbb$, where $j$ is the usual embedding of $[0, 1]$ inside $\IRbb$,
mapping $x$ to the interval $[x, x]$.  We will show that, contrarily
to $\lambda$, $j [\lambda]$ \emph{is} minimal.

This may sound somewhat paradoxical, considering that both have the
same effect: drawing an interval at random with respect to measure
$j [\lambda]$ means drawing an interval of the form $[x, x]$ with
$x \in [0, 1]$ with probability $1$, where $x$ is drawn uniformly at
random in $[0, 1]$, hence works just like $\lambda$; only the ambient
space differs ($\IRbb$ instead of $[0, 1]$).

We will say that $j [\lambda]$ is the \emph{Lebesgue valuation on the
  unit interval} in $\IRbb$.  By Theorem~\ref{thm:rval} with
$X \eqdef \IRbb$ and $K \eqdef [0, 1]$, $\rval {j [\lambda]}$ is the
continuous $\Icreal$-valuation that approximates $j [\lambda]$ in the
most precise possible way.  It is a bounded, non-zero, and
$\tau$-smooth measure (because $\IRbb$ is second-countable, see
Example~\ref{exa:IR:scomp}).  The objective of this section is to show
that $\rval {j [\lambda]}$ \emph{is} in $\Val^{\Icreal}_m (\IRbb)$.

To this end, we will show the stronger statement that
$\rval {j [\lambda]}$ is the directed supremum of a countable chain of
simple $\Icreal$-valuations $\lebesgue_n$, $n \in \nat$.

We define $\lebesgue_n$ on $\IRbb$ as
$\sum_{i=1}^{2^n} [\frac 1 {2^n}, \frac 1 {2^n}] \times \delta_{[\frac
  {i-1} {2^n}, \frac i {2^n}]}$---which we will write more simply as
$\sum_{i=1}^{2^n} \frac 1 {2^n} \delta_{[\frac {i-1} {2^n}, \frac i
  {2^n}]}$, equating points $a \in \real \cup \{-\infty, \infty\}$
with intervals $[a, a]$.

\begin{lemma}
  \label{lemma:lambdan}
  The simple $\Icreal$-valuations $\lebesgue_n$, $n \in \nat$, form an
  ascending chain in $\Val^{\Icreal}_\fin (\IRbb)$.
\end{lemma}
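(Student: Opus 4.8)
The plan is to show two things: that each $\lebesgue_n$ is indeed an elementary $\Icreal$-valuation (which is immediate from Definition~\ref{defn:simple}, since it is a finite sum $\sum_{i=1}^{2^n} r_i \times \delta_{x_i}$ with $r_i = [\frac1{2^n},\frac1{2^n}] \in \Icreal$ and $x_i = [\frac{i-1}{2^n},\frac i{2^n}] \in \IRbb$), and that $\lebesgue_n \leq \lebesgue_{n+1}$ in the pointwise order on $\Val^{\Icreal}(\IRbb)$. The first point needs only a remark. For the second, since the order on $\Val^{\Icreal} X$ is pointwise and the order on $\Icreal$ is reverse inclusion, I must check that for every $h \in \Lform^{\Icreal}(\IRbb)$, the interval $\lebesgue_n(h) = \sum_{i=1}^{2^n} \frac1{2^n} h([\frac{i-1}{2^n},\frac i{2^n}])$ contains the interval $\lebesgue_{n+1}(h)$.

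The key structural observation is the dyadic refinement: each level-$n$ interval $[\frac{i-1}{2^n},\frac i{2^n}]$ is the supremum in $\IRbb$ of the two level-$(n+1)$ intervals $[\frac{2i-2}{2^{n+1}},\frac{2i-1}{2^{n+1}}]$ and $[\frac{2i-1}{2^{n+1}},\frac{2i}{2^{n+1}}]$ sitting above it (they are both $\supseteq$-smaller, i.e. $\geq$), since the larger interval is their intersection. Hence, because $h$ is Scott-continuous and in particular monotone, $h([\frac{i-1}{2^n},\frac i{2^n}]) \leq h([\frac{2i-2}{2^{n+1}},\frac{2i-1}{2^{n+1}}])$ and likewise for the right half, where $\leq$ on $\Icreal$ is $\supseteq$. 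Writing $h(\mathbf x) = [h^-(\mathbf x), h^+(\mathbf x)]$, this says $h^-([\frac{i-1}{2^n},\frac i{2^n}]) \leq h^-$ of each child and $h^+([\frac{i-1}{2^n},\frac i{2^n}]) \geq h^+$ of each child. Now I compute componentwise. Using $[a,b] + [c,d] = [a+c,b+d]$ and scalar multiplication by $\frac1{2^n}$, the lower endpoint of $\lebesgue_n(h)$ is $\frac1{2^n}\sum_{i=1}^{2^n} h^-([\frac{i-1}{2^n},\frac i{2^n}])$; grouping the $2^{n+1}$ terms of $\lebesgue_{n+1}(h)$ in pairs and using $\frac1{2^{n+1}}(h^-(\text{left child}) + h^-(\text{right child})) \geq \frac1{2^{n+1}}\cdot 2\, h^-([\frac{i-1}{2^n},\frac i{2^n}]) = \frac1{2^n} h^-([\frac{i-1}{2^n},\frac i{2^n}])$, I get that the lower endpoint of $\lebesgue_{n+1}(h)$ is $\geq$ that of $\lebesgue_n(h)$. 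Symmetrically, the upper endpoint of $\lebesgue_{n+1}(h)$ is $\leq$ that of $\lebesgue_n(h)$. Therefore $\lebesgue_{n+1}(h) \subseteq \lebesgue_n(h)$, i.e. $\lebesgue_n(h) \leq \lebesgue_{n+1}(h)$ in $\Icreal$, for every $h$, which is exactly $\lebesgue_n \leq \lebesgue_{n+1}$.

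I expect the only subtlety—hardly an obstacle—to be bookkeeping with the two product operations $\cdot_\ell, \cdot_r$ and the additive structure of $\Icreal$ when some endpoint equals $+\infty$: multiplication by the scalar $[\frac1{2^n},\frac1{2^n}]$ with $\frac1{2^n} \in \realp \diff\{0\}$ never triggers the exceptional $0 \cdot (+\infty)$ cases, so $[\frac1{2^n},\frac1{2^n}] \times [a,b] = [\frac a{2^n}, \frac b{2^n}]$ holds even when $b = +\infty$ (and when $a = -\infty$, noting $\IRbb$ allows $-\infty$ but the level-$n$ intervals all have finite rational endpoints, so the relevant Dirac masses are at finite intervals and the only infinities come from the values of $h$). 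Thus the componentwise arithmetic above is valid in the extended reals throughout, and the argument goes through unchanged. Once the chain is in hand, the further claim that its supremum is $\rval{j[\lambda]}$ (and hence that $\rval{j[\lambda]} \in \Val^{\Icreal}_m(\IRbb)$) is the subject of the subsequent development and is not needed here.
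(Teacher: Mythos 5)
Your proof is correct and follows essentially the same route as the paper's: split each level-$n$ term into two halves and use that the parent dyadic interval is below (i.e.\ contains) each of its two children, together with monotonicity of $h$, addition and scalar multiplication — you merely carry out the comparison componentwise on endpoints where the paper argues directly in the d-rag via distributivity. One inessential slip: the parent interval is the \emph{infimum} (convex hull) of its two children in $\IRbb$, not their supremum (their intersection is the singleton midpoint), but your argument only uses the correct fact that the parent is below each child, so nothing breaks.
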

\begin{proof}
  It suffices to show that $\lebesgue_n \leq \lebesgue_{n+1}$.  For every
  $h \in \Lform^{\Icreal} (\IRbb)$,
  \begin{align*}
    \lebesgue_n (h)
    & = \sum_{i=1}^{2^n} \frac 1 {2^n} \times h \left(\left[\frac {i-1}
      {2^n}, \frac i {2^n}\right]\right) \\
    & = \sum_{i=1}^{2^n} \left(
      \frac 1 {2^{n+1}} \times h \left(\left[\frac {i-1} {2^n}, \frac i {2^n}\right]\right)
      + \frac 1 {2^{n+1}} \times h \left(\left[\frac {i-1} {2^n}, \frac i {2^n}\right]\right)
      \right) \\
    & \leq \sum_{i=1}^{2^n} \left(
      \frac 1 {2^{n+1}} \times h \left(\left[\frac {2i-2} {2^{n+1}}, \frac {2i-1} {2^{n+1}}\right]\right)
      + \frac 1 {2^{n+1}} \times h \left(\left[\frac {2i-1} {2^{n+1}}, \frac
      {2i} {2^{n+1}}\right]\right)
      \right) \\
    & = \lebesgue_{n+1} (h).
  \end{align*}
  The second line is justified by the fact that $\times$ distributes
  over $+$.  The inequality on the third line follows from the fact
  that $[\frac {i-1} {2^n}, \frac i {2^n}]$ is below (contains) both
  $[\frac {2i-2} {2^{n+1}}, \frac {2i-1} {2^{n+1}}]$ and
  $[\frac {2i-1} {2^{n+1}}, \frac {2i} {2^{n+1}}]$, and that $h$,
  product and addition are monotonic.  The last line follows by
  rearranging the sum.
\end{proof}

The chain ${(\lebesgue_n)}_{n \in \nat}$ then has a supremum in $\Val^{\Icreal}
(\IRbb)$, which is in $\Val^{\Icreal}_m (\IRbb)$ by definition of the latter,
since every $\lebesgue_n$ is simple.
\begin{definition}
  \label{defn:lambda}
  Let $\lebesgue$ be $\dsup_{n \in \nat} \lebesgue_n$.
\end{definition}

For every $h \in \Lform^{\Icreal} (\IRbb)$, we have:
\begin{align}
  \label{eq:lebesgue}
  \nonumber
  \lebesgue (h)
  & = \dsup_{n \in \nat} \lebesgue_n (h) \\
  & = \left[
    \dsup_{n \in \nat}
    \sum_{i=1}^{2^n}\frac 1 {2^n} h^- \left(\left[\frac {i-1} {2^n},
    \frac i {2^n}\right]\right),
    \finf_{n \in \nat}
    \sum_{i=1}^{2^n}\frac 1 {2^n} h^+ \left(\left[\frac {i-1} {2^n},
    \frac i {2^n}\right]\right)
    \right].
\end{align}


\begin{theorem}
  \label{thm:rval:lebesgue}
  The continuous $\Icreal$-valuation $\lebesgue$ is the largest
  (``most precise'') continuous $\Icreal$-valuation
  $\rval {j [\lambda]}$ that approximates Lebesgue measure on the unit
  interval in $\IRbb$.

  In particular, $\rval {j [\lambda]} = \lebesgue$ is in
  $\Val^{\Icreal}_m (\IRbb)$.
\end{theorem}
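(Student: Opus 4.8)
The plan is to obtain both assertions by combining Theorem~\ref{thm:rval} with a direct computation. Apply Theorem~\ref{thm:rval} with $X \eqdef \IRbb$, $K \eqdef [0,1]$, and $\mu \eqdef j[\lambda]$: its hypotheses hold because $\IRbb$ is stably compact and second-countable (Example~\ref{exa:IR:scomp}), $[0,1]$ is a Hausdorff, patch-compact subset of $\IRbb$ (Example~\ref{exa:I:IR:embed}), and $j[\lambda]$ is a non-zero bounded measure supported on $[0,1]$ (since $j^{-1}(\IRbb \diff [0,1]) = \emptyset$ and $j[\lambda](\IRbb) = \lambda([0,1]) = 1$). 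This already gives that $j[\lambda]$ is $\tau$-smooth and that $\rval{j[\lambda]}$ is the largest continuous $\Icreal$-valuation that approximates $j[\lambda]$. The remaining, and main, task is the equality $\lebesgue = \rval{j[\lambda]}$; once this is proved, the membership $\rval{j[\lambda]} = \lebesgue \in \Val^{\Icreal}_m(\IRbb)$ follows from Definition~\ref{defn:lambda}, since $\lebesgue$ is a directed supremum of the simple, hence minimal, $\Icreal$-valuations $\lebesgue_n$.

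To prove $\lebesgue = \rval{j[\lambda]}$, I would fix $h \in \Lform^{\Icreal}(\IRbb)$ and compare the two sides endpoint by endpoint, using formula (\ref{eq:lebesgue}) for $\lebesgue(h)$ and Definition~\ref{defn:meas:rval} for $\rval{j[\lambda]}(h) = [\int_{\IRbb}^- h^- \,dj[\lambda],\ \int_{\IRbb}^+ h^+ \,dj[\lambda]]$. Write $g_n^{\pm}$ for the Borel step function on $[0,1]$ taking the value $h^{\pm}([\tfrac{i-1}{2^n},\tfrac{i}{2^n}])$ on $(\tfrac{i-1}{2^n},\tfrac{i}{2^n}]$, so that $\int_{[0,1]}^- g_n^{\pm} \,d\lambda = \sum_{i=1}^{2^n} \tfrac{1}{2^n} h^{\pm}([\tfrac{i-1}{2^n},\tfrac{i}{2^n}])$. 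Since $h^-$ is Scott-continuous and $h^+$ upper semicontinuous as maps $\IRbb \to \creal$, and since for each $x \in (0,1]$ the level-$n$ dyadic intervals containing $x$ form an ascending chain in $\IRbb$ with supremum $[x,x]$, one gets $g_n^-(x) \uparrow h^-([x,x])$ and $g_n^+(x) \downarrow h^+([x,x])$ pointwise.

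For the left endpoints, the change-of-variables formula (\ref{eq:chgvar}) gives $\int_{\IRbb}^- h^- \,dj[\lambda] = \int_{[0,1]}^- h^-([x,x]) \,d\lambda$, and the $\omega$-continuity of the lower integral (Lemma~\ref{lemma:int-}, item~3) turns this into $\dsup_n \int_{[0,1]}^- g_n^- \,d\lambda = \dsup_n \sum_i \tfrac{1}{2^n} h^-([\tfrac{i-1}{2^n},\tfrac{i}{2^n}])$, the left endpoint of $\lebesgue(h)$. For the right endpoints I would split according to whether $h^+$ is $j[\lambda]$-bounded. If it is $j[\lambda]$-unbounded, then $h^+([x_0,x_0]) = \infty$ for some $x_0 \in [0,1]$ — here I would use that every measurable compact saturated support $Q$ of $j[\lambda]$ contains an interval containing $x_0$, proved by a cluster-point argument using the base $\{\uuarrow[a,b]\}$ of the Scott topology of $\IRbb$ together with Corollary~\ref{corl:muunbound} — and then antitonicity of $h^+$ makes $h^+$ equal to $\infty$ on every dyadic interval containing $x_0$, so both $\int_{\IRbb}^+ h^+ \,dj[\lambda]$ and $\finf_n \sum_i \tfrac{1}{2^n} h^+([\tfrac{i-1}{2^n},\tfrac{i}{2^n}])$ equal $\infty$. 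If $h^+$ is $j[\lambda]$-bounded, then $x \mapsto h^+([x,x])$ is bounded on $[0,1]$ (upper semicontinuity on the compact $[0,1]$); Lemma~\ref{lemma:int+}, item~5 — with witness $Q \eqdef \upc[0,1]$, for which $Q \cap \supp(j[\lambda]) = [0,1]$ by Lemma~\ref{lemma:dcK} — identifies $\int_{\IRbb}^+ h^+ \,dj[\lambda]$ with the ordinary Lebesgue integral $\int_{[0,1]} h^+([x,x]) \,d\lambda$; and a compactness argument, again via the basic Scott-open sets $\uuarrow[a,b]$, shows that for $n$ large every level-$n$ dyadic interval lies in a set on which $h^+$ is finite, so the $g_n^+$ are eventually bounded and the antitone monotone convergence theorem yields $\finf_n \sum_i \tfrac{1}{2^n} h^+([\tfrac{i-1}{2^n},\tfrac{i}{2^n}]) = \int_{[0,1]} h^+([x,x]) \,d\lambda$.

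Combining the two endpoint computations gives $\lebesgue(h) = \rval{j[\lambda]}(h)$ for every $h$, hence $\lebesgue = \rval{j[\lambda]}$, which with the first paragraph yields both assertions. I expect the main obstacle to be the right-endpoint analysis: one must match the behaviour of the upper integral $\int^+$ — in particular the convention that makes it $\infty$ on $\mu$-unbounded integrands — with that of the dyadic sums, and this forces one to pin down exactly when $h^+$ is $j[\lambda]$-bounded (namely, precisely when $h^+([x,x]) < \infty$ for all $x \in [0,1]$) and to check that the truncated dyadic sums are finite in the bounded case; both facts rest on a compactness argument about intervals in $\IRbb$ shrinking to a point.
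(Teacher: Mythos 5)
Your plan is essentially the paper's proof: Theorem~\ref{thm:rval} with $X\eqdef\IRbb$, $K\eqdef[0,1]$ gives the ``largest approximant'' part, minimality of $\lebesgue$ comes from its being a directed supremum of simple $\Icreal$-valuations, and the core is the endpointwise identity $\rval{j[\lambda]}(h)=\lebesgue(h)$, proved via the change-of-variables formula, dyadic discretization, and a case split on $j[\lambda]$-boundedness of $h^+$. Your technical packaging differs only mildly: the paper replaces your Borel step functions $g_n^\pm$ by continuous staircase maps $j_n\colon[0,1]\to\IRbb$ with $\dsup_{n} j_n=j$ and invokes Lemma~\ref{lemma:int-} (items 3--4) and the Scott-cocontinuity of the upper integral (Lemma~\ref{lemma:int+}, item 3) where you use the classical monotone and antitone convergence theorems; it uses well-filteredness of $\IRbb$ (or a patch-compactness argument) where you use a Lebesgue-number argument with the basic opens $\uuarrow[a,b]$; and it establishes the support facts directly ($\supp j[\lambda]=\dc K$, $K=\upc K$ is a measurable compact saturated support, every compact saturated support contains $K$) where you propose a patch-topology cluster-point argument. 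All of these substitutions work.

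One caveat: in the bounded case, the parenthetical ``upper semicontinuity on the compact $[0,1]$'' does not by itself justify the implication ``$h^+$ $j[\lambda]$-bounded $\Rightarrow$ $h^+\circ j$ bounded on $[0,1]$''; usc on a compact set only upgrades ``finite everywhere'' to ``bounded''. What is actually needed is the support-containment fact that every measurable compact saturated support $Q$ of $j[\lambda]$ contains $j([0,1])$ (so that $j([0,1])\subseteq Q\cap\supp j[\lambda]$ for whatever witness $Q$ of boundedness is given); this is precisely where the paper spends most of its effort, and it also underlies your use of Lemma~\ref{lemma:int+}, item 5, with witness $\upc[0,1]$. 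You do identify the right characterization at the end ($h^+$ is $j[\lambda]$-bounded iff $h^+([x,x])<\infty$ for all $x\in[0,1]$), and your cluster-point idea does prove the containment: for each $x$ and $\epsilon>0$ the basic open $\uuarrow[x-\epsilon,x+\epsilon]$ has positive $j[\lambda]$-measure, hence meets $Q$ (a support), and the resulting points patch-converge to $[x,x]$, which therefore lies in the patch-closed set $Q$. So this is a gap of justification rather than of strategy; make that step explicit in the write-up.
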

\begin{proof}
  The interval $[0, 1]$, with its usual ordering, is a continuous dcpo, and its
  way-below relation $\ll$ is such that $x \ll y$ if and only if $x=0$
  or $x<y$.  For every $n \in \nat$, for every $x \in [0, 1]$, let
  $j_n^- (x)$ be the largest integer multiple of $\frac 1 {2^n}$
  way-below $x$; explicitly, $j_n^- (x) \eqdef \frac i {2^n}$ if
  $x \in \left]\frac i {2^n}, \frac {i+1} {2^n}\right]$,
  $i \in \{1, 2, \cdots, 2^n-1\}$, and $j_n^- (x) \eqdef 0$ if
  $x \in \left[0, \frac 1 {2^n}\right]$.  By the definition of $\ll$,
  $j_n^-$ is Scott-continuous from $[0, 1]$ to
  $\real \cup \{-\infty, \infty\}$ (both taken with their usual
  orderings).  This implies that $j_n$ is lower semicontinuous from
  $[0, 1]$ to $\real \cup \{-\infty, \infty\}$, with their usual,
  Hausdorff topologies.

  Let also $j_n^+ (x) \eqdef 1 - j_n^- (1-x)$.  The function $j_n^+$
  is an upper semicontinuous map, and
  $j_n^- (x) \leq x \leq j_n^+ (x)$ for every $x \in X$.  This implies
  that, if we define $j_n (x)$ as $[j_n^- (x), j_n^+ (x)]$, $j_n$ is a
  continuous map from $[0, 1]$ to $\IRbb$, and $j_n \leq j$.

  One checks easily that $j_n^- \leq j_{n+1}^-$, hence
  $j_n^+ \geq j_{n+1}^+$, and therefore $j_n \leq j_{n+1}$.  It
  follows that ${(j_n)}_{n \in \nat}$ is an increasing chain of
  continuous maps.  Moreover $\dsup_{n \in \nat} j_n = j$.  Indeed,
  for every $x \in X$, $\dsup_{n \in \nat} j_n^- (x) = x$, and
  $\finf_{n \in \nat} j_n^+ (x) = 1 - \dsup_{n \in \nat} j_n^- (1-x) =
  1-(1-x) = x$.

  Let now $h \eqdef [h^-, h^+]$ be any element of
  $\Lform^{\Icreal} (\IRbb)$.  We wish to show that
  $\rval {j [\lambda]} (h) = \lebesgue (h)$, namely that
  $\left[\int_{y \in \IRbb}^- h^- (y) d j [\lambda], \int_{y \in
      \IRbb}^+ h^+ (y) dj [\lambda]\right] = \lebesgue (h)$.

  Note that $h^-$ is lower semicontinuous, or equivalently
  Scott-continuous from $\IRbb$ to $\Icreal$.  We have:
  \begin{align*}
    \int_{y \in \IRbb}^- h^- (y) dj[\lambda]
    & = \int_{x \in [0, 1]}^- h^- (j (x)) d\lambda
    & \text{by the change of variable formula (\ref{eq:chgvar})} \\
    & = \int_{x \in [0, 1]}^- \dsup_{n \in \nat} h^- (j_n (x))
      d\lambda
    & \text{since $h^-$ is Scott continuous} \\
    & = \dsup_{n \in \nat} \int_{x \in [0, 1]}^- h^- (j_n (x))
      d\lambda
    & \text{by Lemma~\ref{lemma:int-}, item~3 (or~4)}.
  \end{align*}
  The function $h^- \circ j_n$ is piecewise constant: it takes the
  value $h^- (\left[\frac {i-1} {2^n}, \frac i {2^n}\right])$ on the
  open subinterval $\left]\frac {i-1} {2^n}, \frac i {2^n}\right[$,
  $1 \leq i \leq 2^n$.  Since the values in $[0, 1]$ that are not in one of those
  open subintervals, namely the integer multiples of $\frac 1 {2^n}$,
  form a set of Lebesgue measure $0$, they do not contribute to the
  integral, so:
  \begin{align*}
    \int_{y \in \IRbb}^- h^- (y) dj[\lambda]
    & = \dsup_{n \in \nat} \sum_{i=1}^{2^n}
      \frac 1 {2^n} h^- \left(\left[\frac {i-1} {2^n}, \frac i {2^n}\right]\right),
  \end{align*}
  and we recognize the left endpoint of the final interval of
  (\ref{eq:lebesgue}).

  We now deal with the right endpoint.  For that, we need to
  understand what the supports of $j [\lambda]$ are on $\IRbb$.  Let
  $K$ be the image of $[0, 1]$ by $j$ in $\IRbb$.  We have seen in
  Example~\ref{exa:I:IR:embed} that $\dc K$ is closed in $\IRbb$,
  hence that $K$ is patch-compact in $\IRbb$, using
  Lemma~\ref{lemma:dcK}.
  
  For every open subset $U$ of $\IRbb$, $j [\lambda] (U) > 0$ if and
  only if $\lambda (j^{-1} (U)) > 0$, if and only if $j^{-1} (U)$ is
  non-empty.  Indeed, the Lebesgue measure of any non-empty open set
  is non-zero.  Now $j^{-1} (U)$ is empty if and only if $U$ does not
  intersect $K$, if and only if $U$ does not intersect the closure of
  $K$, which is $\dc K$, since $\dc K$ is closed.  This entails that
  $\supp j [\lambda] = \dc K$.
  
  We note that $K$ is a support of $j [\lambda]$.  The easy argument
  is as follows.  First, $K$ is a set of maximal elements of $\IRbb$,
  so $K = \upc K$; by Lemma~\ref{lemma:2ndcount}, it is measurable.
  In order to show that $K$ is a support of $j [\lambda]$, it then
  suffices to observe that $j [\lambda] (K) = 1$, and this follows
  from the fact that
  $j [\lambda] (K) = \lambda (j^{-1} (K)) = \lambda ([0, 1]) = 1$.

  We claim that every compact saturated support $Q$ of $j [\lambda]$
  must contain $K$.  We argue as follows.  By
  Lemma~\ref{lemma:Kpatch}, and since $Q$ is patch-closed in
  $X^\patch$ (where $X \eqdef \IRbb$), $Q \cap K$ is closed in $K$.
  If $Q$ does not contain $K$, then $Q \cap K$ is a proper subset of
  $K$.  Let $U \eqdef K \diff Q$: this is a non-empty open subset of
  $K$, and therefore $j^{-1} (U)$ is a non-empty subset of $[0, 1]$.
  Hence $\lambda (j^{-1} (U)) > 0$, so $j [\lambda] (U) > 0$.  It
  follows that $j [\lambda] (Q \cap K) = 1 - j [\lambda] (U) < 1$.
  Since $K$ is a support of $j [\lambda]$, we obtain that
  $j [\lambda] (Q) = j [\lambda] (Q \cap K) < 1$, and that contradicts
  that $Q$ is a support of $j [\lambda]$.

  It follows that $\upc K$ ($=K$) is the smallest compact saturated
  support of $j [\lambda]$.  In that case, and with
  $\mu \eqdef j [\lambda]$, the definition of $\mu$-boundedness
  simplifies: $h^+$ is $\mu$-bounded if and only if $h^+$ is bounded
  on $\upc K \cap \supp \mu = \upc K \cap \dc K = K$.

  It is even easier to show that $h^+ \circ j$ is $\lambda$-bounded if
  and only if it is bounded on $[0, 1]$.  Indeed,
  $\supp \lambda = [0, 1]$, and therefore the only compact (hence
  closed) support $Q$ of $\lambda$ is $[0, 1]$, so that there is only
  one possible set $Q \cap \supp \lambda$ to be considered, namely
  $[0, 1]$.

  If $h^+$ is $\mu$-unbounded, then
  $\int_{y \in \IRbb} h^+ (y) dj[\lambda] = \infty$ by definition.  By
  Corollary~\ref{corl:muunbound} with $Q \eqdef \upc K = K$, there is
  a point $[x,x] \in Q \cap \supp \mu = K$ such that
  $h^+ ([x,x]) = \infty$.  For every $n \in \nat$, there is a natural
  number $i$ such that $x \in [\frac {i-1} {2^n}, \frac i {2^n}]$,
  $1\leq i\leq 2^n$.  Then
  $[\frac {i-1} {2^n}, \frac i {2^n}] \leq [x, x]$, and since every
  upper semicontinuous map is antitonic,
  $h^+ ([\frac {i-1} {2^n}, \frac i {2^n}]) \geq h^+ ([x,x]) =
  \infty$.  It follows that
  $\sum_{i=1}^{2^n} \frac 1 {2^n} h^+ ([\frac {i-1} {2^n}, \frac i
  {2^n}]) = \infty$.  Since that holds for every $n \in \nat$,
  $\finf_{n \in \nat} \sum_{i=1}^{2^n} \frac 1 {2^n} h^+ ([\frac {i-1}
  {2^n}, \frac i {2^n}])$, which is the right endpoint of the final
  interval of (\ref{eq:lebesgue}), is equal to $\infty$, hence to
  $\int_{y \in \IRbb} h^+ (y) dj[\lambda]$.

  It remains to deal with the case where $h^+$ is $\mu$-bounded, and
  we have seen that this means that $h^+$ is bounded on $K$.
  Let $r \in \Rp$ be such that for every $y \in K$, $h^+ (y) < r$.
  Hence $K$ is included in the open set ${h^+}^{-1} ([0, r[)$.
  
  For every $n \in \nat$, let
  $Q_n \eqdef \upc \{[\frac {i-1} {2^n}, \frac i {2^n}] \mid i \in
  \{1, 2, \cdots, 2^n\}\}$, a compact saturated set that contains $K$.
  (The upward closure of any finite set is compact saturated.)  It is
  also easy to see that any point of $\fcap_{n \in \nat} Q_n$ is of
  the form $[x, x]$ with $x \in [0, 1]$, so
  $\fcap_{n \in \nat} Q_n = K$.  Since $\IRbb$ is sober, it is
  well-filtered, and therefore $K \subseteq {h^+}^{-1} ([0, r[)$
  implies the existence of an $n \in \nat$ such that
  $Q_n \subseteq {h^+}^{-1} ([0, r[)$.  (Here is an alternate argument
  that avoids well-filteredness.  $Q_n$ is compact saturated hence
  closed in ${\IRbb}^\patch$.  The complements of the sets $Q_n$ then
  form an open cover of the complement of ${h^+}^{-1} ([0, r[)$ in
  $\IRbb$.  That complement is closed hence compact in
  ${\IRbb}^\patch$.  We can then extract a finite subcover, and since
  the sets $Q_n$ form a chain, there is a single $n \in \nat$ such
  that the complement of $Q_n$ contains the complement of
  ${h^+}^{-1} ([0, r[)$.)
  
  Let $n_0$ be the natural number $n$ that we have just found.  Then
  we perform the following computation:
  \begin{align*}
    \int_{y \in \IRbb}^+ h^+ (y) dj [\lambda]
    & = \int_{y \in \IRbb}^- h^+ (y) dj [\lambda]
    & \text{since $h^+$ is $j [\lambda]$-bounded} \\
    & = \int_{x \in [0, 1]}^- h^+ (j (x)) d \lambda
    &  \text{by the change of variable formula (\ref{eq:chgvar})} \\
    & = \int_{x \in [0, 1]}^+ h^+ (j (x)) d \lambda.
  \end{align*}
  The latter is justified by the fact that $h^+ \circ j$ is bounded
  (by $r$) on $[0, 1]$, hence is $\lambda$-bounded, as we have seen
  above.

  Let us proceed.  The first step below is justified by the fact that
  $h^+$ is upper semicontinuous, hence Scott-continuous from $\IRbb$
  to $\creal$ with the opposite ordering; in particular, $h^+$ maps
  directed suprema to filtered infima:
  \begin{align*}
    \int_{x \in [0, 1]}^+ h^+ (j (x)) d \lambda
    & = \int_{x \in [0, 1]}^+ \finf_{n \in \nat} h^+ (j_n (x))
      d\lambda \\
    & = \finf_{n \in \nat} \int_{x \in [0, 1]}^+ h^+ (j_n (x))
      d\lambda
    & \text{by Lemma~\ref{lemma:int+}, item~3} \\
    & = \finf_{n \in \nat, n > n_0} \int_{x \in [0, 1]}^+ h^+ (j_n (x))
      d\lambda,
  \end{align*}
  where we have restricted the infimum to the indices above $n_0$ in
  the last line.  (The infimum of a chain coincides with the infimum
  of any coinitial chain.)
  
  For every $n > n_0$, for every $x \in [0, 1]$, $j_n (x)$ is an
  interval of the form $[\frac {i-1} {2^n}, \frac i {2^n}]$ (if $x$ is
  in the interval $]\frac {i-1} {2^n}, \frac i {2^n}[$, or if $x=0$,
  or if $x=1$), or of the form
  $[\frac {i-1} {2^n}, \frac {i+1} {2^n}]$ (if $x$ is exactly
  $\frac i {2^n}$, $1\leq i \leq 2^n-1$).  Since $n > n_0$, whichever
  the case is, $j_n (x)$ is in $Q_{n_0}$, hence in
  ${h^+}^{-1} ([0, r[)$.  This means that $h^+ \circ j_n$ is a bounded
  function, and therefore that
  $\int_{x \in [0, 1]}^+ h^+ (j_n (x)) d\lambda$ is the ordinary
  Lebesgue integral $\int_{x \in [0, 1]} h^+ (j_n (x)) d\lambda$.
  Since $h^+ \circ j_n$ is piecewise constant (as with
  $h^- \circ j_n$, earlier on), that Lebesgue integral is equal to
  $\sum_{i=1}^{2^n} \frac 1 {2^n} h^+([\frac {i-1} {2^n}, \frac i
  {2^n}])$.  Therefore:
  \begin{align*}
    \int_{y \in \IRbb}^+ h^+ (y) dj [\lambda]
    & = \finf_{n \in \nat, n > n_0} \sum_{i=1}^{2^n} \frac 1 {2^n}
      h^+\left(\left[\frac {i-1} {2^n}, \frac i {2^n}\right]\right) \\
    & = \finf_{n \in \nat} \sum_{i=1}^{2^n} \frac 1 {2^n}
      h^+\left(\left[\frac {i-1} {2^n}, \frac i {2^n}\right]\right),
  \end{align*}
  and we recognize the right endpoint of the final interval of
  (\ref{eq:lebesgue}).
\end{proof}






\section{Conclusion}
\label{sec:conc}

We have proposed an extension of the notion of continuous valuation,
or measure, with values in suitable domains beyond $\creal$.  We have
argued that continuous $R$-valuations, where $R$ is a so-called
Abelian d-rag, provide such an extension.  Beyond $\creal$, a
particularly interesting Abelian d-rag is the domain of intervals
$\Icreal$, and we have shown that there is an ample supply of
continuous $\Icreal$-valuations stemming from measures.

There are many pending questions.  For example, is $\Val^R (X)$ a
\emph{continuous} dcpo, provided that $X$ is a continuous dcpo and $R$
is a continuous Abelian d-rag?  Is there a form of the Fubini theorem
for continuous $R$-valuations, beyond the one we have obtained for
minimal $R$-valuations?  None of the usual proof arguments, in realms
of measures or of continuous valuations, seems to apply.

\bibliographystyle{entics}
\bibliography{myrefs}

\end{document}